\newtheorem{theorem}{Theorem}     
\numberwithin{theorem}{section}
\newtheorem{lemma}[theorem]{Lemma}     
\newtheorem{corollary}[theorem]{Corollary}     
\newtheorem{proposition}[theorem]{Proposition}     
\theoremstyle{definition}  
\newtheorem{example}[theorem]{Example}     
\newtheorem{remark}[theorem]{Remark}     
\newcounter{FNC}[page]
\def\fauxfootnote#1{{\addtocounter{FNC}{2}$\Magenta{^\fnsymbol{FNC}}$%
     \let\thefootnote\relax\footnotetext{\Magenta{$^\fnsymbol{FNC}$#1}}}}
\renewcommand{\P}{{\mathbb P}}
\newcommand{\C}{{\mathbb C}}
\newcommand{\R}{{\mathbb R}}
\newcommand{\Z}{{\mathbb Z}}
\newcommand{\calC}{{\mathcal C}}
\newcommand{\calI}{{\mathcal I}}
\newcommand{\calJ}{{\mathcal J}}
\newcommand{\calS}{{\mathcal S}}
\def\scrI{\mathscr{I}}
\newcommand{\frakm}{{\mathfrak m}}
\newcommand{\ini}{{\rm in}}
\DeclareMathOperator{\codim}{{\rm codim}}
\DeclareMathOperator{\depth}{{\rm depth}}
\DeclareMathOperator{\supp}{{\rm supp}}
\DeclareMathOperator{\ann}{{\rm ann}}
\DeclareMathOperator{\mult}{{\rm mult}}
\DeclareMathOperator{\indeg}{{\rm indeg}}
\DeclareMathOperator{\nd}{{\rm end}}
\DeclareMathOperator{\reg}{{\rm reg}}
\DeclareMathOperator{\kernel}{{\rm kernel}}
\DeclareMathOperator{\image}{{\rm image}}
\DeclareMathOperator{\HF}{{\it HF}}
\DeclareMathOperator{\HP}{{\it HP}}
\DeclareMathOperator{\Ext}{{\it Ext}}
\newcommand{\defcolor}[1]{\RoyalBlue{#1}}
\newcommand{\demph}[1]{\defcolor{{\sl #1}}}
\title{Semialgebraic Splines}
\author[M.~DiPasquale]{Michael DiPasquale}     
\address{Michael DiPasquale\\     
         Department of Mathematics\\     
         Oklahoma State University\\     
         Stillwater\\
         OK \ 74078-1058\\     
         USA}     
\email{midipasq@gmail.com}     
\urladdr{\url{http://math.okstate.edu/people/mdipasq/}}   
\author[F.~Sottile]{Frank Sottile}     
\address{Frank Sottile\\     
         Department of Mathematics\\     
         Texas A\&M University\\     
         College Station\\     
         Texas \ 77843\\     
         USA}     
\email{sottile@math.tamu.edu}     
\urladdr{\url{http://www.math.tamu.edu/~sottile}}   
\author[L.~Sun]{Lanyin Sun}     
\address{Lanyin Sun\\     
          School of Mathematical Sciences\\
          Dalian University of Technology\\
          Dalian 116024
          China}
\email{lanyinsun@mail.dlut.edu.cn}%\email{lanyin09@gmail.com}
\thanks{Research of Sottile supported in part by NSF grant DMS-1501370.}
\thanks{Research of Sun supported in part by the National Natural Science Foundation of China (Nos. 11290143, 11271060, 11401077) and Fundamental Research of Civil Aircraft (No. MJ-F-2012-04)}
\subjclass{13D02, 41A15}
\keywords{spline modules}
\begin{document}     
     
\begin{abstract}     
 Semialgebraic splines are functions that are piecewise polynomial with respect to a cell decomposition 
 into sets defined by polynomial inequalities.
 We study bivariate semialgebraic splines, formulating spaces of semialgebraic splines in terms of graded
 modules.
 We compute the dimension of the space of splines with large degree in two extreme
 cases when the cell decomposition has a single interior vertex.
 First, when the forms defining the edges span a two-dimensional space of forms of degree $n$---then the
 curves they define meet in $n^2$ points in the complex projective plane.
 In the other extreme, the curves have distinct slopes at the vertex and do not simultaneously vanish at
 any other point.
 We also study examples of the Hilbert function and polynomial in cases of a single vertex where the curves do
 not satisfy either of these extremes.
\end{abstract}

\maketitle     
%%%%%%%%%%%%%%%%%%%%%%%%%%%%%%%%%%%%%%%%%%%%%%%%%%%%%%%%%%%%%%%%%%%%%%%%%%%%     
%     
\section{Introduction}\label{Sec:intro}     
A multivariate spline is a function on a domain in $\R^n$ that is piecewise a polynomial with respect
to a cell decomposition $\Delta$ of the domain. 
A fundamental question is to describe the vector space of splines on $\Delta$ that have a
given smoothness and whose polynomial constituents have at most a fixed degree.
Traditionally, $\Delta$ is a simplicial~\cite{Strang} or polyhedral~\cite{Schum84} complex.
Here, we consider the case when $\Delta$ is a planar complex whose cells are bounded by arcs of algebraic
curves.
We will call splines on $\Delta$ \demph{semialgebraic splines}, as the cells are semialgebraic sets.

Wang made the first steps in semialgebraic splines~\cite{Wang75,Wang85}, observing that
smoothness is equivalent to the usual existence of smoothing cofactors across 1-cells satisfying
conformality conditions at each vertex.
Stiller~\cite{Stiller83} used sheaf cohomology to determine the dimensions of spline spaces in some cases when
$\Delta$ has a single interior vertex.
When $\Delta$ is a polyhedral complex, classical spline spaces were recast in terms of graded modules and
homological algebra by Billera~\cite{Billera88}, who further developed this 
with Rose~\cite{BiRo91,BiRo92} and there is further foundational work by Schenck and
Stillman~\cite{ShSt97a,ShSt97b}.  
We study %the more general 
semialgebraic splines %, concentrating on the case 
when $\Delta$ has a single interior vertex.  
In many cases we compute the Hilbert polynomial, which gives the dimensions of the spline spaces when the degree is
greater than the postulation number, which we also consider. 

In Section~\ref{S:SplineModules}, we fix our notation and give background on spline modules.
We treat the local case when the subdivision $\Delta$ has a single interior vertex $\upsilon$ in the next two 
sections.
In Section~\ref{S:pencil}, the forms defining the curves lie in pencil, so that they define a
scheme of degree $n^2$ in $\C\P^2$, where $n$ is the degree of each curve.
In Section~\ref{S:singleVertex}, the curves are smooth at $\upsilon$ and their only common zero is $\upsilon$.  
Some of this is similar to Stiller's work~\cite{Stiller83}, but our main results involve
hypotheses that are complementary and less restrictive than his (see Remark~\ref{R:Stiller}).
Both Sections~\ref{S:pencil} and~\ref{S:singleVertex} address the dimension of the spline space in large degree.  
In Section~\ref{S:Regularity} we show how results from the theory of linkage can be used to evaluate the dimension
of the spline space in low degree in some instances, and address the question of how large the degree must be for
the formulas of Section~\ref{S:pencil} and~\ref{S:singleVertex} to hold using Castelnuovo-Mumford regularity.  
We close with Section~\ref{S:examples} where we give examples that suggest some extensions of this work
when $\Delta$ has a single interior  vertex.

%%%%%%%%%%%%%%%%%%%%%%%%%%%%%%%%%%%%%%%%%%%%%%%%%%%%%%%%%%%%%%%%%%%%%%%%%%%%%%%%%
\section{Spline Modules}\label{S:SplineModules}

Billera~\cite{Billera88} introduced methods from homological algebra into the study of splines.
This was refined by Billera and Rose~\cite{BiRo91,BiRo92} and by Schenck and Stillman~\cite{ShSt97a,ShSt97b}, who
viewed spaces of splines as homogeneous summands of graded modules over the polynomial ring, so that
the dimension of spline spaces is given by the Hilbert function of the module.
We fix our notation and make the straightforward observation that this homological approach carries over
to semialgebraic splines, in the same spirit as Wang's observation that smoothing cofactors and
conformality conditions for polyhedral splines carry over to semialgebraic splines~\cite{Wang75,Wang85}.
For more complete background, we recommend \S~8.3 of~\cite{CLO05}.
Background concerning free resolutions and modules may be found in~\cite{CLO05} or~\cite{Eisenbud}.

Let $\Delta$ be a finite cell complex in the plane $\R^2$, whose 1-cells are arcs of irreducible real
algebraic curves. 
We call the 2-cells of $\Delta$, \demph{faces}, the 1-cells, \demph{edges}, and 0-cells,
\demph{vertices}. 
We assume that each vertex and edge of $\Delta$ lies in the boundary of some face (it is \demph{pure}), that it is
connected, and that it is \demph{hereditary}: for any faces $\sigma,\sigma'$ sharing a vertex $\upsilon$,
there is a sequence $\sigma=\sigma_0,\sigma_1\dotsc,\sigma_n=\sigma'$ of faces containing $\upsilon$ such that   
each pair $\sigma_{i-1},\sigma_i$ for $i=1,\dotsc,n$ shares an edge.
Write $\defcolor{|\Delta|}\subset\R^2$ for the support of $\Delta$.
We assume that $|\Delta|$ is contractible and require that each connected component of the intersection of 
two cells of $\Delta$ is a cell of $\Delta$.
Write $\Delta^\circ_i$ for the set of $i$-cells of $\Delta$ that lie in the interior of $|\Delta|$.
Every face $\sigma$ of $\Delta$ inherits the orientation of $\R^2$ and we fix an orientation of
each edge $\tau\in\Delta^\circ_1$.

Figure~\ref{F:cell_complex} shows a cell complex with one interior vertex, three interior edges (oriented
inwards) and three faces.
%%%%%%%%%%%%%%%%%%%%%%%%%%%%%%%%%%%%%%%%%%%%%%%%%%%%%%%%%%%%%%%%%%%%%%%%%%%%%%%%%
\begin{figure}[htb]

\includegraphics[height=80pt]{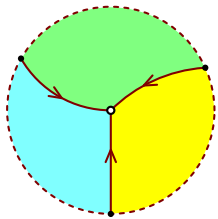}

 \caption{Cell complex with one interior vertex.}
 \label{F:cell_complex}
\end{figure}
%%%%%%%%%%%%%%%%%%%%%%%%%%%%%%%%%%%%%%%%%%%%%%%%%%%%%%%%%%%%%%%%%%%%%%%%%%%%%%%%%
Placing that vertex at the origin, $|\Delta|$ is the unit disc, and its edges (in
clockwise order) lie along the negative $y$-axis, the circle of radius 1 centered at
$(0,1)$, and the circle of radius $\sqrt{2}$ centered at $(1,-1)$. 

Let $R$ be a ring.
A \demph{chain complex $\calC$} is a sequence $C_0,C_1,\dotsc,C_n$ of $R$-modules with $R$-module maps
$\partial_i\colon C_i\to C_{i-1}$, whose compositions vanish, $\partial_{i-1}\circ\partial_i=0$,
so that the kernel of $\partial_{i-1}$ contains the image of $\partial_i$.
(Here, $C_{-1}=C_{n+1}=0$.)
The \demph{homology} of $\calC$ is the sequence of $R$-modules 
$\defcolor{H_i(\calC)}:=\kernel(\partial_{i-1})/\image(\partial_i)$, for
$i=0,\dotsc,n$. 

Let \defcolor{$R(\Delta)$} be the chain complex whose $i$th module has a basis given by the cells of
$\Delta^\circ_i$ and whose maps are induced by the boundary maps on the cells.
For the cell complex $\Delta$ of Figure~\ref{F:cell_complex}, $R(\Delta)$ is $R^3\to R^3\to R$.
Since the interior cells subdivide $|\Delta|$ with its boundary removed, the homology of the chain complex
$R(\Delta)$ is the relative homology $H_i(|\Delta|,\partial|\Delta|;R)$.
This always vanishes when $i=0$.
If $|\Delta|$ is connected and contractible, then we also have that $H_1(R(\Delta))=0$ and $H_2(R(\Delta))=R$.  

For integers $r,d\geq 0$, let $\defcolor{\widetilde{C}^r_d(\Delta)}$ be the real vector space of functions $f$
on $|\Delta|$ which have continuous $r$th order partial derivatives and whose
restriction to each face $\sigma$ of $\Delta$ is a polynomial $f_\sigma$ of degree at most $d$.
By~\cite{Wang75} (see also~\cite[Cor.~1.3]{BiRo92}), elements $f\in \widetilde{C}^r_d(\Delta)$ are lists
$(f_\sigma\mid\sigma\in\Delta_2)$ of polynomials such that if $\tau\in\Delta^\circ_1$ is an interior edge with
defining equation $g_\tau(x,y)=0$ that borders the two-dimensional faces $\sigma,\sigma'$, then   
$g_\tau^{r+1}$ divides the difference $f_\sigma-f_{\sigma'}$.
(The quotient is the smoothing cofactor at $\tau$.)

Figure~\ref{F:spline_graphs} displays the graphs of two splines on the complex $\Delta$ of 
%%%%%%%%%%%%%%%%%%%%%%%%%%%%%%%%%%%%%%%%%%%%%%%%%%%%%%%%%%%%%%%%%%%%%%%%%%%%%%%%%
\begin{figure}[htb]

\includegraphics[height=150pt]{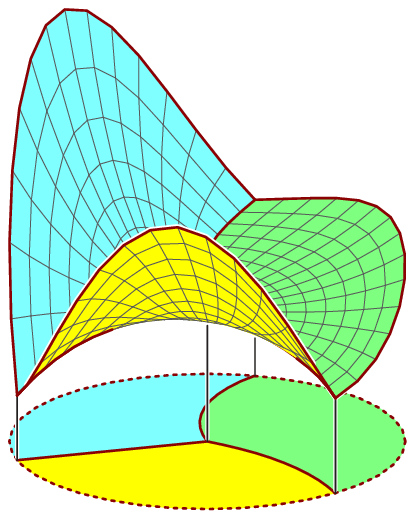}
\qquad
\includegraphics[height=150pt]{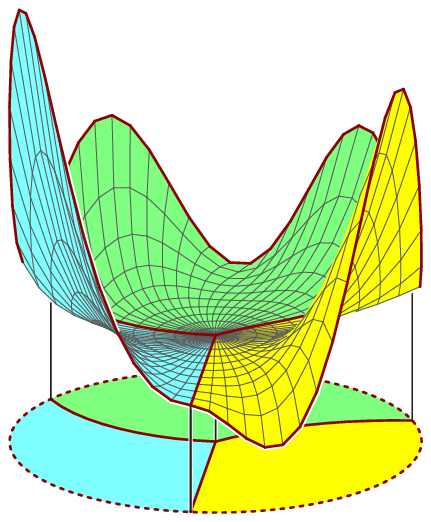}

\caption{Graphs of splines.}
\label{F:spline_graphs}
\end{figure}
%%%%%%%%%%%%%%%%%%%%%%%%%%%%%%%%%%%%%%%%%%%%%%%%%%%%%%%%%%%%%%%%%%%%%%%%%%%%%%%%%
Figure~\ref{F:cell_complex}.
The spline on the left lies in $\widetilde{C}^0_3(\Delta)$ and that on the right lies in
$\widetilde{C}^1_6(\Delta)$.
These are nonconstant splines on $\Delta$ of lowest degree for the given smoothness. 

Billera and Rose~\cite{BiRo91} observed that homogenizing spline spaces enables a global homological
approach to computing them. 
Let $\defcolor{S}:=\R[x,y,z]$ be the homogeneous coordinate ring of $\P^2(\R)$.
Write $\langle G_1,\dotsc,G_t\rangle$ for the ideal generated by $G_1,\dotsc,G_t$.
Let $\defcolor{C^r_d(\Delta)}$ be the vector space of lists $(F_\sigma \mid \sigma\in\Delta_2)$ of
homogeneous forms in $S$ of degree $d$ such that if $f_\sigma:= F_\sigma(x,y,1)$ is the dehomogenization of
$F_\sigma$, then $(f_\sigma \mid \sigma\in\Delta_2)\in \widetilde{C}^r_d(\Delta)$.
Define $\defcolor{C^r(\Delta)}:=\bigoplus_d C^r_d(\Delta)$ to be the direct sum of these homogenized spline spaces.
Call $C^r(\Delta)$ the \demph{spline module}.
It is a graded module of the graded ring $S$.

%%%%%%%%%%%%%%%%%%%%%%%%%%%%%%%%%%%%%%%%%%%%%%%%%%%%%%%%%%%%%%%%%%%%%%%%%%%%%%%%%
\begin{lemma}\label{L:SplineModule}
 The spline module $C^r(\Delta)$ is finitely generated.
 It is the kernel of the map
 \begin{equation}\label{Eq:spline_kernel}
   S^{\Delta_2}\ \simeq\ \bigoplus_{\sigma\in\Delta_2} S\ \xrightarrow{\ \ \partial_1\ \ }
   \bigoplus_{\tau\in\Delta^\circ_1} S/\langle G_\tau^{r+1}\rangle\ ,
 \end{equation}
 where $G_\tau$ is the homogeneous form defining the edge $\tau$ and if 
 $F=(F_\sigma \mid \sigma\in\Delta_2)\in S^{\Delta_2}$ 
 and $\tau\in\Delta^\circ_1$, then the $\tau$-component of $\partial F$ is the
 difference $F_\sigma-F_{\sigma'}$, where $\tau$ is a component of the intersection $\sigma\cap\sigma'$
 and its the orientation agrees with that induced from $\sigma$,
 but is opposite to that induced from $\sigma'$.
\end{lemma}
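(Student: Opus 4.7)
The plan is to unwind the definitions: the vanishing of $\partial_1(F)$ will encode precisely Wang's divisibility criterion, after which finite generation falls out of Noetherianity.

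First I would observe that a tuple $(F_\sigma)_{\sigma\in\Delta_2} \in S^{\Delta_2}$ lies in $\kernel(\partial_1)$ if and only if, for every interior edge $\tau \in \Delta^\circ_1$ shared by two faces $\sigma, \sigma'$, the difference $F_\sigma - F_{\sigma'}$ is divisible by $G_\tau^{r+1}$ in $S$. This is immediate from the prescription of $\partial_1$ given in the statement, since the orientation-induced sign only flips the sign of $F_\sigma - F_{\sigma'}$, which is irrelevant for divisibility.

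Next I would verify that this homogeneous divisibility translates to the affine divisibility $g_\tau^{r+1} \mid f_\sigma - f_{\sigma'}$ in $\R[x,y]$, where $f_\sigma = F_\sigma(x,y,1)$ and $g_\tau = G_\tau(x,y,1)$. The forward implication is trivial: set $z=1$. For the converse, each $G_\tau$ is the standard homogenization of the irreducible plane curve $g_\tau$, hence is irreducible in $S$ and not divisible by $z$. Since homogenization is multiplicative on $\R[x,y]$, the equation $f_\sigma - f_{\sigma'} = g_\tau^{r+1} h$ homogenizes to $(f_\sigma - f_{\sigma'})^h = G_\tau^{r+1} h^h$. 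Comparing with $F_\sigma - F_{\sigma'}$, which differs from $(f_\sigma - f_{\sigma'})^h$ only by a power of $z$ (and $z$ is coprime to $G_\tau$ in the UFD $S$), we obtain $G_\tau^{r+1} \mid F_\sigma - F_{\sigma'}$. Combined with Wang's criterion as recalled in the excerpt, this shows $\kernel(\partial_1)_d = C^r_d(\Delta)$ in every degree, hence $\kernel(\partial_1) = C^r(\Delta)$ as graded $S$-modules.

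Finally, finite generation is automatic from Noetherianity: $S$ is Noetherian by Hilbert's basis theorem, so every submodule of the finitely generated free module $S^{\Delta_2}$ is finitely generated. The only place where any care is needed is the converse in step two, where one must check that $G_\tau$ has no $z$ factor; this is guaranteed by taking $G_\tau$ to be the standard homogenization of the irreducible affine form defining the edge $\tau$ in the affine chart $\{z=1\}$. All other steps amount to bookkeeping.
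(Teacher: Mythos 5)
Your proof is correct and follows the same route the paper takes implicitly: the lemma is stated without a separate proof precisely because it amounts to Wang's smoothing-cofactor criterion (quoted just before the lemma, with the reference to \cite{Wang75} and \cite[Cor.~1.3]{BiRo92}) combined with the routine check that homogeneous divisibility by $G_\tau^{r+1}$ in degree $d$ matches affine divisibility by $g_\tau^{r+1}$ after dehomogenizing, plus Noetherianity of $S$ for finite generation. Your care with the converse homogenization step (that $F_\sigma-F_{\sigma'}$ is $z^k$ times the homogenization of $f_\sigma-f_{\sigma'}$ and that $z\nmid G_\tau$) is exactly the only point that needs checking, so nothing is missing.
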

%%%%%%%%%%%%%%%%%%%%%%%%%%%%%%%%%%%%%%%%%%%%%%%%%%%%%%%%%%%%%%%%%%%%%%%%%%%%%%%%%

Let $M=\bigoplus_d M_d$ be a finitely generated graded $S$-module.
The \demph{Hilbert function} of $M$ records the dimensions of its graded pieces, 
$\defcolor{\HF(M,d)}:=\dim_\R M_d$.
%Commutative algebra~\cite{Eisenbud} informs us that t
There is an integer $d_0\geq 0$ such that if $d>d_0$, then the Hilbert function is a polynomial, called the
\demph{Hilbert polynomial} of $M$, \defcolor{$\HP(M,d)$}~\cite{Eisenbud}.
The \demph{postulation number} of $M$ is the minimal such $d_0$, the greatest integer at which the Hilbert
function and Hilbert polynomial disagree. 
The reason for these definitions is that the problem of computing the dimensions $\dim C^r_d(\Delta)$ of the
spline spaces is equivalent to computing the Hilbert function of the spline module $C^r(\Delta)$, which equals its
Hilbert polynomial for $d>d_0$.

Table~\ref{T:HilbertFunction} gives the Hilbert function and Hilbert polynomial of $C^r(\Delta)$ for
$r=0,\dotsc,3$, where $\Delta$ is the cell complex of Figure~\ref{F:cell_complex}.  
The polynomials may be verified using Theorem~\ref{Cor:generalPowers}.
%%%%%%%%%%%%%%%%%%%%%%%%%%%%%%%%%%%%%%%%%%%%%%%%%%%%%%%%%%%%%%%%%%%%%%%%%%%%%%%%%
\begin{table}[htb]
 \caption{Hilbert function and polynomial of $C^r(\Delta)$.}
 \label{T:HilbertFunction}
  \begin{tabular}{|r|rrrrrrrrrrrrrr|c|r|}
    \hline  
    $r\backslash d$\rule{0pt}{12pt}
       &0&1&2&3&4&  5&  6&  7&  8&   9&  10&  11&  12&  13&Polynomial & $d_0$\\\hline
    $0$&1&3&6&\defcolor{13}& 23 & 36& 52& 71& 93& 118& 146& 177& 211& 248&\raisebox{-6pt}{\rule{0pt}{18pt}}%
                         $\frac{3}{2}d^2{-}\frac{1}{2}d{+}1$& 1\\
    $1$&1&3&6&10&15& 21&\defcolor{30}& 44& 61&  81& 104& 130& 159& 191&\raisebox{-6pt}{\rule{0pt}{18pt}}%
                         $\frac{3}{2}d^2{-}\frac{11}{2}d{+}9$&5 \\
    $2$&1&3&6&10&15& 21& 28& 36& 45& \defcolor{57}&  73&  94& 118& 145&\raisebox{-6pt}{\rule{0pt}{18pt}}%
                         $\frac{3}{2}d^2{-}\frac{21}{2}d{+}28$&9 \\
    $3$&1&3&6&10&15& 21& 28& 36& 45&  55&  66&  78&\defcolor{93}& 111&\raisebox{-6pt}{\rule{0pt}{18pt}}%
                      %134 162 193
                         $\frac{3}{2}d^2{-}\frac{31}{2}d{+}57$&13 \\\hline
% \binom{d+2}{2}   1, 3, 6, 10, 15, 21, 28, 36, 45, 55, 66, 78, 91, 105, 120, 136, 153
%    3*d^2/2 - (10*r+1)*d/2 + (1,9,28,57,97,147,208,279,361,453,556,669,793,927)
%                              0 1  2  3  4   5   6   7   8   9  10  11  12  13
% dim C^r_d(\Delta)  3*d^2/2 - (10*r+1)*d/2 + c(r), where c(r) is:
% r even  c(r) = 21*r^2/4 + 3*r +1:
% r odd   c(r) = 21*r^2/4 + 3*r + 3/4:
   $\tbinom{d+2}{2}$&1&3&6&10&15&21&28&36&45&55&66&78&91&105&\raisebox{-6pt}{\rule{0pt}{18pt}}%
     $\frac{1}{2}d^2+\frac{1}{2}d$& 0 \\\hline
 \end{tabular}
\end{table}
%%%%%%%%%%%%%%%%%%%%%%%%%%%%%%%%%%%%%%%%%%%%%%%%%%%%%%%%%%%%%%%%%%%%%%%%%%%%%%%%%
Its last row is the Hilbert function/polynomial of $\R[x,y,z]$, these are the constant splines---splines that
are restrictions of polynomials on $\R^2$. 
There are only constant splines in degrees less than $3r+3$.
The last column is the postulation number.
%\Red{{\sf The postulation numbers start out: $1,5,9,13,17,20,24,28,32,35,39,43,47,50,54,58$ (these agree with
%    $\reg(S/I)$ where $I=\langle f^{r+1},g^{r+1},h^{r+1} \rangle$ and $f,g,h$ are the three forms defining
%    the line and the two quadrics).}} 

For $\tau\in\Delta^\circ_1$, define $\defcolor{J(\tau)}:=\langle G_\tau^{r+1}\rangle$, the principal ideal
generated by $G_\tau^{r+1}$ and for $\upsilon\in\Delta^\circ_0$, define \defcolor{$J(\upsilon)$} to be the
ideal generated by all $J(\tau)$ where $\tau$ is incident on $\upsilon$.
Let $\calJ_1$ and $\calJ_0$ be the direct sums of these ideals, 
\[
\calJ_1\ :=\ \bigoplus_{\tau\in\Delta^\circ_1} J(\tau)
\qquad \mbox{and}\qquad    
\calJ_0\ :=\ \bigoplus_{\upsilon\in\Delta^\circ_0} J(\upsilon)\,.
\]
Then $\calJ \colon\calJ_1\xrightarrow{\partial_1}\calJ_0$ is a complex of $S$-modules, with $\partial_1$
the obvious map.
This is a subcomplex of the chain complex $\defcolor{\calS}:=S(\Delta)$ that computes the homology of
the pair $H_*(|\Delta|,\partial|\Delta|;S)$.
We have the short exact sequence of complexes of $S$-modules,
 \begin{equation}\label{Eq:SES}
   0\ \longrightarrow\ \calJ\ \longrightarrow\ 
   \calS\ \longrightarrow\ \calS/\calJ\ \longrightarrow\  0\,,
 \end{equation}
where $\calS/\calJ$ is the quotient complex,
 \[
  0\ \longrightarrow\ 
  \bigoplus_{\sigma\in\Delta_2} S\ \xrightarrow{\ \partial_2\ }\ 
  \bigoplus_{\tau\in\Delta^\circ_1} S/J(\tau)\ \xrightarrow{\ \partial_1\ }\ 
  \bigoplus_{\upsilon\in\Delta^\circ_0} S/J(\upsilon)\ \longrightarrow\ 0.
 \]
Observe that $C^r(\Delta)$ is the kernel of $\partial_2$.
That is, $C^r(\Delta)=H_2(\calS/\calJ)$.
The short exact sequence~\eqref{Eq:SES} gives the long exact sequence in homology
(note that $H_2(\calJ)=0$).
 \begin{multline*}
   \qquad \;0\to H_2(\calS)\to H_2(\calS/\calJ)\to H_1(\calJ)\to 
    H_1(\calS) \\
   \to H_1(\calS/\calJ)\to H_0(\calJ)\to H_0(\calS)\to H_0(\calS/\calJ)\to 0\,.\ \qquad
 \end{multline*}
%

%%%%%%%%%%%%%%%%%%%%%%%%%%%%%%%%%%%%%%%%%%%%%%%%%%%%%%%%%%%%%%%%%%%%%%%%%%%%%%%%%
\begin{proposition}\label{P:exact_sequences}
 We have $H_0(\calS/\calJ)=0$.
 If the support $|\Delta|$ of $\Delta$ is contractible, then $H_1(\calS/\calJ)\simeq H_0(\calJ)$ and 
 $C^r(\Delta)\simeq S\oplus H_1(\calJ)$, with the factor of $S$ the constant splines.

 If there is a unique interior vertex $\upsilon$, then $0=H_1(\calS/\calJ)=H_0(\calJ)$ and $H_1(\calJ)$ is the
 module of syzygies on the list of forms $(G_\tau^{r+1}\mid \tau\in\Delta^\circ_1)$.
\end{proposition}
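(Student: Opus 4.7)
My plan is to deduce all three claims from the long exact sequence in homology associated to~\eqref{Eq:SES}, using two inputs already in the text: the identification $H_*(\calS)=H_*(|\Delta|,\partial|\Delta|;S)$ and the vanishing $H_2(\calJ)=0$ (since $\calJ$ has no term in homological degree~$2$).

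For the first claim, since $|\Delta|$ is connected with nonempty boundary, $H_0(\calS)=0$, and the tail $H_0(\calS)\to H_0(\calS/\calJ)\to 0$ of the long exact sequence immediately forces $H_0(\calS/\calJ)=0$. Under the additional assumption that $|\Delta|$ is contractible, $H_1(\calS)=0$ and $H_2(\calS)\simeq S$, so the long exact sequence breaks off the isomorphism $H_1(\calS/\calJ)\simeq H_0(\calJ)$ together with a short exact sequence
\[
 0\longrightarrow S\longrightarrow C^r(\Delta)\longrightarrow H_1(\calJ)\longrightarrow 0,
\]
using $C^r(\Delta)=H_2(\calS/\calJ)$. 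To split this sequence, I would trace definitions to check that the map $S\to C^r(\Delta)$ is the inclusion of constant splines $\iota\colon f\mapsto(f,\dots,f)$; then the projection $\pi\colon(F_\sigma)_\sigma\mapsto F_{\sigma_0}$ onto any fixed face coordinate $\sigma_0\in\Delta_2$ is an $S$-linear retract, so $C^r(\Delta)\simeq S\oplus H_1(\calJ)$ with the $S$-summand the constant splines.

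For the third claim, with a single interior vertex $\upsilon$ (so that every interior edge is incident to $\upsilon$), $\calS_0=S\cdot e_\upsilon$ and $\calJ_0=J(\upsilon)\cdot e_\upsilon$, and the boundary $\partial_1\colon\calJ_1\to\calJ_0$ sends $h\cdot e_\tau$ (for $h\in\langle G_\tau^{r+1}\rangle$) to $\pm h\cdot e_\upsilon$. Since $J(\upsilon)$ is by definition generated by the $G_\tau^{r+1}$, this map is surjective, yielding $H_0(\calJ)=0$ and hence $H_1(\calS/\calJ)=0$ from the second claim. To identify $H_1(\calJ)=\ker\partial_1$ with the stated syzygy module, I would use that each $G_\tau^{r+1}$ is a non-zero-divisor in $S$, so multiplication by $G_\tau^{r+1}$ gives $\langle G_\tau^{r+1}\rangle\simeq S$. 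Under these isomorphisms $\calJ_1\simeq\bigoplus_\tau S$ and $\partial_1$ becomes $(f_\tau)_\tau\mapsto\sum_\tau\pm f_\tau G_\tau^{r+1}$, whose kernel is precisely the module of syzygies on $(G_\tau^{r+1}\mid\tau\in\Delta^\circ_1)$ (the signs being absorbed by an automorphism of $\bigoplus_\tau S$).

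The only point requiring genuine care is the splitting in claim~(2): I must verify that the long exact sequence map $H_2(\calS)\to H_2(\calS/\calJ)$ really is the inclusion of constant splines, so that $\pi$ is a legitimate $S$-module retract. This amounts to showing that $\ker(\partial_2)\subset\calS_2=S^{\Delta_2}$ is the free rank-one $S$-module generated by $(1,\dots,1)$ (up to orientation signs on faces), which follows from $|\Delta|$ being a connected, oriented $2$-complex with contractible support. Everything else in the argument is chain-level bookkeeping.
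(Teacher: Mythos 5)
Your proposal is correct and follows essentially the same route as the paper: the long exact sequence from~\eqref{Eq:SES}, the vanishing $H_0(\calS)=H_1(\calS)=0$ and $H_2(\calS)=S$ from contractibility, the splitting of $0\to S\to C^r(\Delta)\to H_1(\calJ)\to 0$ via projection onto a fixed face coordinate, and the identification of $\calJ$ as the first step of a resolution of $J(\upsilon)$ so that $H_1(\calJ)$ is the syzygy module. The extra care you take in verifying that $H_2(\calS)$ is the constant splines and in transporting $\partial_1$ across the isomorphisms $\langle G_\tau^{r+1}\rangle\simeq S$ is exactly the bookkeeping the paper leaves implicit.
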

%%%%%%%%%%%%%%%%%%%%%%%%%%%%%%%%%%%%%%%%%%%%%%%%%%%%%%%%%%%%%%%%%%%%%%%%%%%%%%%%%

%%%%%%%%%%%%%%%%%%%%%%%%%%%%%%%%%%%%%%%%%%%%%%%%%%%%%%%%%%%%%%%%%%%%%%%%%%%%%%%%%
\begin{proof}
 Since $\calS$ is the complex $S(\Delta)$, $H_0(\calS)=0$ so that $H_0(\calS/\calJ)=0$.
 If $|\Delta|$ is contractible, then $H_1(\calS)=0$ and $H_2(\calS)=S$. 
 Thus the remaining long exact sequence splits into sequences of lengths 2 and 3.
 The first gives $H_1(\calS/\calJ)\simeq H_0(\calJ)$ and the second is
\[
   0\ \longrightarrow\ S\ \longrightarrow\ C^r(\Delta)\ \longrightarrow\ H_1(\calJ)\ \longrightarrow\ 0\,,
\]
 giving the direct sum decomposition $C^r(\Delta)\simeq S\oplus H_1(\calJ)$, as the first map has a splitting 
 $(F_\sigma\mid\sigma\in\Delta_2)\mapsto F_{\sigma_0}$ given by any $\sigma_0\in\Delta_2$.
 The kernel $H_2(\calS)$ of the map $\partial_2$ of $\calS$ is the submodule of constant splines.

 Lastly, if there is a unique interior vertex $\upsilon$, then the forms 
 $\{G_\tau^{r+1}\mid \tau\in\Delta^\circ_1\}$ generate $J(\upsilon)$.
 Thus $H_0(\calJ)=0$ and the complex $\calJ$ is the first step in the
 resolution of the ideal $J(\upsilon)$ given the generators $( G_\tau^{r+1}\mid \tau\in\Delta^\circ_1)$.
 It follows that $H_1(\calJ)$ is the module of syzygies (or relations) on the forms 
 $(G_\tau^{r+1}\mid\tau\in\Delta^\circ_1)$.   
 When $J(\upsilon)$ is minimally generated by $(G_\tau^{r+1}\mid \tau\in\Delta^\circ_1)$, then 
 $H_1(\calJ)\simeq \mbox{syz}(J(\upsilon))$.
\end{proof}
%%%%%%%%%%%%%%%%%%%%%%%%%%%%%%%%%%%%%%%%%%%%%%%%%%%%%%%%%%%%%%%%%%%%%%%%%%%%%%%%%

Write \defcolor{$\phi_2$} for the number of faces of $\Delta$, \defcolor{$\phi_1$} for the number of interior edges,
and \defcolor{$\phi_0$} for the number of interior vertices, and for an interior edge $\tau\in\Delta^\circ_1$, let
\defcolor{$n_\tau$} be the degree of the form $G_\tau$ defining $\tau$.

%%%%%%%%%%%%%%%%%%%%%%%%%%%%%%%%%%%%%%%%%%%%%%%%%%%%%%%%%%%%%%%%%%%%%%%%%%%%%%%%%
\begin{corollary}\label{C:DimFormula}
 Suppose that the support $|\Delta|$ of $\Delta$ is contractible.
 Then for $r$ and $d$,
 \begin{equation}\label{Eq:dimFormula}
   \dim C^r_d(\Delta)\ =\ 
    (\phi_2-\phi_1)\tbinom{d+2}{2} + \sum_{\tau\in\Delta^\circ_1} \tbinom{d-(r+1)n_\tau+2}{2}
     + \sum_{\upsilon\in\Delta^\circ_0}\dim(S/J(\upsilon))_d
     + \dim H_0(\calJ)_d\,.
 \end{equation}
 When $\Delta$ has a unique interior vertex $\upsilon$, we have 
 \begin{equation}\label{Eq:DimensionFormula}
   \dim C^r_d(\Delta)\ =\ 
   \sum_{\tau\in\Delta^\circ_1} \tbinom{d-(r+1)n_\tau+2}{2}
     + \dim(S/J(\upsilon))_d\,.
 \end{equation}
 For $d\gg 0$, $\dim(S/J(\upsilon))_d$ is the degree of the scheme defined by $J(\upsilon)$. 
\end{corollary}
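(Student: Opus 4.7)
The plan is to compute $\dim C^r_d(\Delta)$ by equating the Euler characteristic of the quotient complex $\calS/\calJ$ (in each graded degree $d$) with the Euler characteristic of its homology. Since $C^r(\Delta)=H_2(\calS/\calJ)$ by construction, once the lower homology is identified via Proposition~\ref{P:exact_sequences} the dimension of $C^r_d(\Delta)$ can be read off as an alternating sum.

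First I would tabulate the dimensions of the three chain modules of $\calS/\calJ$ in degree $d$. The top contributes $\phi_2\binom{d+2}{2}$, and the bottom contributes $\sum_{\upsilon\in\Delta^\circ_0}\dim(S/J(\upsilon))_d$. For the middle, the short exact sequence
\[
0 \longrightarrow S(-(r+1)n_\tau) \xrightarrow{\,\cdot G_\tau^{r+1}\,} S \longrightarrow S/\langle G_\tau^{r+1}\rangle \longrightarrow 0
\]
yields $\dim(S/\langle G_\tau^{r+1}\rangle)_d = \binom{d+2}{2}-\binom{d-(r+1)n_\tau+2}{2}$, so the middle module contributes $\phi_1\binom{d+2}{2} - \sum_{\tau\in\Delta^\circ_1}\binom{d-(r+1)n_\tau+2}{2}$ to the Euler characteristic. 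Proposition~\ref{P:exact_sequences} gives $H_0(\calS/\calJ)=0$ and, under contractibility of $|\Delta|$, $H_1(\calS/\calJ)\simeq H_0(\calJ)$. Solving the Euler-characteristic identity for $\dim H_2(\calS/\calJ)_d = \dim C^r_d(\Delta)$ then rearranges directly into~\eqref{Eq:dimFormula}.

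For the unique-interior-vertex case, Proposition~\ref{P:exact_sequences} gives $H_0(\calJ)=0$, killing the last term of~\eqref{Eq:dimFormula}. It remains to verify $\phi_2=\phi_1$: in a contractible planar complex with a single interior vertex, the $\phi_1$ interior edges radiate outward into the boundary and partition $|\Delta|$ into $\phi_1$ sectors (equivalently, the Euler-characteristic count $\chi(|\Delta|)=1$ on the cellulation forces $\phi_2-\phi_1=0$, once one observes that the boundary contributes equal numbers of vertices and edges). The $\binom{d+2}{2}$ term then vanishes and~\eqref{Eq:DimensionFormula} follows. The final sentence is the standard fact that when the projective scheme $V(J(\upsilon))\subset\P^2$ is zero-dimensional (which holds whenever $J(\upsilon)$ has height two, e.g.\ whenever two of the forms $G_\tau^{r+1}$ are coprime), the Hilbert polynomial of $S/J(\upsilon)$ is the constant $\deg V(J(\upsilon))$.

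The argument is essentially bookkeeping: it draws only on Proposition~\ref{P:exact_sequences} together with standard facts about Hilbert polynomials, so I do not anticipate any genuine obstacle. The one place requiring care is correctly collecting the signed $\binom{d+2}{2}$ contributions from the top and middle chain modules into the $(\phi_2-\phi_1)\binom{d+2}{2}$ coefficient in~\eqref{Eq:dimFormula}.
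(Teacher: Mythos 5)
Your proposal is correct and follows essentially the same route as the paper: equating the graded Euler characteristic of the chain complex $\calS/\calJ$ with that of its homology, using Proposition~\ref{P:exact_sequences} to identify $H_0$ and $H_1$, and the short exact sequence $0\to S(-(r{+}1)n_\tau)\to S\to S/\langle G_\tau^{r+1}\rangle\to 0$ for the middle term. The only difference is cosmetic: you spell out the topological reason that $\phi_2=\phi_1$ in the single-vertex case, which the paper simply asserts.
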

%%%%%%%%%%%%%%%%%%%%%%%%%%%%%%%%%%%%%%%%%%%%%%%%%%%%%%%%%%%%%%%%%%%%%%%%%%%%%%%%%

Formula~\eqref{Eq:DimensionFormula} is~\cite[Cor.~3.2]{Stiller83}, which is for mixed splines (see
Remark~\ref{R:mixed}). 
Recall that $S(-a)$ is the free $S$-module with one generator of degree $a$.

%%%%%%%%%%%%%%%%%%%%%%%%%%%%%%%%%%%%%%%%%%%%%%%%%%%%%%%%%%%%%%%%%%%%%%%%%%%%%%%%%
\begin{proof}
 From the complex $\calS/\calJ$, we have
 \begin{multline*}
  \qquad\HF(H_2(\calS/\calJ),d)\ -\ \HF(H_1(\calS/\calJ),d)\ +\ \HF(H_0(\calS/\calJ),d)\ =\ \\
  \HF((\calS/\calJ)_2,d)\ -\  \HF((\calS/\calJ)_1,d)\ +\  \HF((\calS/\calJ)_0,d)\,.\qquad
 \end{multline*}
 As $|\Delta|$ is contractible, $H_0(\calS/\calJ)=0$ and $H_1(\calS/\calJ)\simeq H_0(\calJ)$.
 Since $(\calS/\calJ)_2\simeq S^{\Delta_2}$, its Hilbert function is
 $\phi_2\binom{d+2}{2}$.
 From the sum of short exact sequences defining $(\calS/\calJ)_1$, 
\[
   \bigoplus_{\tau\in\Delta^\circ_1} \Bigl( S(-(r{+}1)n_\tau)\ 
     \xrightarrow{\ \cdot G_\tau^{r+1}\ }\ S\ \longrightarrow\ 
            S/\langle G_\tau^{r+1}\rangle\ =\  S/J(\tau)\Bigr)\ ,
\]
 we have that 
\[
  \HF((\calS/\calJ)_1,d)\ =\ \phi_1\tbinom{d+2}{2}\ -\ 
    \sum_{\tau\in\Delta^\circ_1}\tbinom{d-(r{+}1)n_\tau+2}{2}\ .
\]
 As $C^r(\Delta)=H_2(\calS/\calJ)$, and 
 $(\calS/\calJ)_0=\bigoplus_{\upsilon\in\Delta_0} S/J(\upsilon)$, this implies formula~\eqref{Eq:dimFormula}. 

 When $\Delta$ has a unique interior vertex $\upsilon$, $H_0(\calJ)=0$ and $\phi_2=\phi_1$,
 giving~\eqref{Eq:DimensionFormula}. 
\end{proof}
%%%%%%%%%%%%%%%%%%%%%%%%%%%%%%%%%%%%%%%%%%%%%%%%%%%%%%%%%%%%%%%%%%%%%%%%%%%%%%%%%

%%%%%%%%%%%%%%%%%%%%%%%%%%%%%%%%%%%%%%%%%%%%%%%%%%%%%%%%%%%%%%%%%%%%%%%%%%%%%%%%%
\begin{remark}\label{R:mixed}
 This formalism extends to the case of mixed splines as studied in~\cite{DiP_Mixed,DiP_Assoc,GeSch98,Stiller83}.
 For each edge $\tau\in\Delta^0_1$ let $\alpha(\tau)$ be a nonnegative integer.
 Then $C^\alpha(\Delta)$ denotes the splines $(F_\sigma\mid\sigma\in\Delta_2)$ on $\Delta$ where if $\tau$ is
 an edge common to both $\sigma$ and $\sigma'$, then $G_\tau^{\alpha(\tau)+1}$ divides the difference
 $F_\sigma-F_{\sigma'}$.
 This is the kernel of the map of graded modules
\[
    \bigoplus_{\sigma\in\Delta_2} S \ \xrightarrow{\ \partial_2\ }\ 
    \bigoplus_{\tau\in\Delta^\circ_1} S/\langle G_\tau^{\alpha(\tau)+1}\rangle\,.
\]
 This formalism extends as well to splines over cell complexes $\Delta$ of any dimension
 whose cells are semialgebraic sets.
 We leave the corresponding statements to the reader.
\end{remark}
%%%%%%%%%%%%%%%%%%%%%%%%%%%%%%%%%%%%%%%%%%%%%%%%%%%%%%%%%%%%%%%%%%%%%%%%%%%%%%%%%

%%%%%%%%%%%%%%%%%%%%%%%%%%%%%%%%%%%%%%%%%%%%%%%%%%%%%%%%%%%%%%%%%%%%%%%%%%%%%%%%%
\section{Semialgebraic splines with a single vertex I}\label{S:pencil}
We consider the first nontrivial case of semialgebraic splines---when the complex $\Delta$ has a single
interior vertex $\upsilon$ and the forms defining the edges incident on $\upsilon$ form a pencil.
That is, they span a two-dimensional subspace in the space of all forms of degree $n$ vanishing at $\upsilon$.
This is always the case when the edges are line segments with at least two distinct slopes.
We determine the Hilbert polynomial of the spline module, showing that the
multiplicity of the scheme $S/J(\upsilon)$ is $n^2$ times the multiplicity of the scheme $S/I$, where $I$ is
an ideal generated by powers of linear forms vanishing at $\upsilon$.
This has a simple form, which we give in Corollary~\ref{C:multiplicity}.

This shows that the Hilbert polynomial of the spline module does not depend upon the real (as in real-number)
geometry of the curves underlying the edges $\tau$---it is independent of whether or not the curves are singular at
$\upsilon$ or at any other point, and whether or not the other points at which they meet are real, complex, or 
at infinity. 

Suppose that $L_1,\dotsc,L_s$ are linear forms in $\R[x,y]$ defining distinct lines through the origin, so that they
are pairwise coprime, and let \defcolor{$I$} be the ideal generated by the powers $L_1^{r+1},\dotsc,L_s^{r+1}$.
Observe that any $t\leq r{+}2$ of these powers are linearly independent ($r{+}2$ is the dimension of the space of
forms of degree $r{+}1$).
Recall that $S/I$ has a unique (up to change of basis) \demph{minimal free resolution} of the form
\[
     F_\bullet\ \colon\ 0\ \longrightarrow\ F_\delta
                  \ \xrightarrow{\ \psi_{\delta}\ }\ F_{\delta-1}
                  \ \xrightarrow{\ \psi_{\delta-1}\ }\ \dotsb\  \xrightarrow{\ \psi_1\ }\  S
\]
with $\mbox{coker }\psi_1=S/I$ and where the free module $F_i$ equals $\bigoplus_j S(-a_{ij})$.  
The index $\delta$ of the last nonzero free module is the projective dimension of $S/I$. 
By the Hilbert Syzygy theorem~\cite[Cor.~19.7]{Eisenbud}, the projective dimension of an ideal in a
polynomial ring is bounded above by the number of variables (in our case, three). 
The \demph{Castelnuovo-Mumford regularity} (henceforth \demph{regularity}) of $S/I$ is the number
$\max_{i,j}\{a_{ij}-i\}$.

We use the following results of Schenck and Stillman~\cite{ShSt97a}, describing the minimal free resolution
and regularity of an ideal of powers of linear forms in two variables. 

%%%%%%%%%%%%%%%%%%%%%%%%%%%%%%%%%%%%%%%%%%%%%%%%%%%%%%%%%%%%%%%%%%%%%%%%%%%%%%%%%
\begin{proposition}[\cite{ShSt97a}, Thm.~3.1]\label{P:ShSt}
 Let $I=\langle L_1^{r+1},\dotsc,L_t^{r+1}\rangle$ be an ideal minimally generated by the given powers of
 linear forms $L_1,\dotsc,L_t\in R:=\R[x,y]$ with $t>1$.
 A minimal free resolution of $R/I$ is given by
 \begin{equation}\label{Eq:LMFR}
   R(-r{-}1{-}a)^{s_1}\oplus R(-r{-}2{-}a)^{s_2} \ \longrightarrow\ R(-r{-}1)^t\ \longrightarrow\ R\,,
 \end{equation}
 where we have $s_1:=(t{-}1)a{+}t{-}r{-}2$ and $s_2:=r{+}1{-}(t{-}1)a$ with 
 $\defcolor{a}:=\lfloor\frac{r+1}{t-1}\rfloor\geq 1$.
\end{proposition}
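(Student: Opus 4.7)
My plan is to determine the minimal free resolution in two stages: fix its shape from general homological grounds, and then pin down the Betti numbers via a Hilbert-function computation.

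First, since any two of the $L_i$ are linearly independent and $t \geq 2$, the ideal $I$ is $\frakm$-primary, so $R/I$ is Artinian of depth zero; the Auslander--Buchsbaum formula then gives $\mathrm{pd}_R(R/I) = 2$. Minimality of the generators forces the $L_i^{r+1}$ to be linearly independent in $R_{r+1}$, which is only possible when $t \leq r+2$ (since $\dim_\R R_{r+1} = r+2$); this also ensures $a = \lfloor (r+1)/(t-1) \rfloor \geq 1$. The minimal free resolution therefore has the form
\[
 0 \longrightarrow \bigoplus_j R(-b_j) \longrightarrow R(-r-1)^t \longrightarrow R \longrightarrow R/I \longrightarrow 0
\]
with every $b_j > r+1$ by minimality, and the Hilbert--Burch theorem guarantees that the free module on the left has rank exactly $t-1$. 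It remains to show that the multiset of shifts $\{b_j\}$ consists of $s_1$ copies of $r+1+a$ and $s_2$ copies of $r+2+a$.

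Second, I would compute the Hilbert function of $R/I$ by analyzing the multiplication maps $\phi_d \colon R_{d-r-1}^t \to R_d$, $(M_1,\ldots,M_t) \mapsto \sum_i M_i L_i^{r+1}$. A pigeonhole bound forces $\dim\ker(\phi_d) \geq t(d-r) - (d+1)$, and this lower bound becomes positive exactly at $d = r+1+a$, matching the expected location of the first minimal syzygies; equality holds precisely when $\phi_d$ surjects, i.e.\ when $I_d = R_d$. Surjectivity at $d = r+1+a$ can be established via Macaulay's inverse systems: the apolar ideal to $I$ is the ideal of polynomials annihilated (under contraction) by all $L_i^{r+1}$, and in two variables this is controlled by ranks of catalecticant-type matrices whose columns encode powers of the distinct $L_i$. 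Once $\HF(R/I, d)$ is known, additivity of Hilbert series across the resolution yields
\[
 \sum_j T^{b_j} \;=\; (1-T)^2 \cdot \mathrm{HS}(R/I;\,T) \;-\; 1 \;+\; t\,T^{r+1},
\]
and a direct calculation shows that the right-hand side equals $s_1 T^{r+1+a} + s_2 T^{r+2+a}$ with $s_1, s_2$ as stated; nonnegativity of $s_1, s_2$ follows from the definition of $a$ and the bound $t \leq r+2$.

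The main obstacle is the second step, specifically the surjectivity of $\phi_{r+1+a}$, equivalently the vanishing of $R/I$ in degree $r+1+a$. Pigeonhole provides only the lower bound $\dim\mathrm{syz}(I)_{r+1+a} \geq s_1$; establishing the reverse inequality requires either an explicit exhibition of $s_1$ independent syzygies (for instance through Vandermonde-type determinantal identities among the $L_i^{r+1}$) together with a socle-degree argument, or the apolarity input indicated above. This surjectivity is the genuine content of the theorem and is where the special structure of ideals generated by powers of linear forms in two variables enters decisively.
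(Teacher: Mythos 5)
The paper does not prove this proposition; it is imported verbatim from Schenck--Stillman \cite{ShSt97a}, so there is no internal argument to compare against. Judged on its own terms, your reduction is sound: $R/I$ is Artinian of codimension two, Auslander--Buchsbaum gives projective dimension $2$, Hilbert--Burch fixes the rank of the second syzygy module at $t-1$, minimal generation forces $t\le r+2$ (hence $a\ge 1$), and since all generators sit in the single degree $r+1$ while minimality forces every $b_j>r+1$, there is no cancellation in the $K$-polynomial, so the multiset $\{b_j\}$ really is determined by the Hilbert function via your displayed identity. The arithmetic also checks: writing $r+1=(t-1)a+m$ with $0\le m<t-1$ gives $s_1=t-1-m>0$, $s_2=m\ge 0$, $s_1+s_2=t-1$, and your pigeonhole count $\dim\ker\phi_{r+1+a}\ge t(a+1)-(r+a+2)=s_1$ is exactly the expected number of first syzygies.

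The genuine gap is the one you flag yourself: everything hinges on $\phi_d$ having maximal rank for every $d$, equivalently $\dim(R/I)_d=\max\{0,\;d+1-t(d-r)\}$ for $d\ge r+1$, and you do not prove this. You should know that in two variables the apolarity route you gesture at closes the gap in one step, with no catalecticant rank analysis: under the contraction pairing a form $F\in R_d$ satisfies $L^{r+1}\circ F=0$ if and only if $\ell^{\,d-r}$ divides $F$, where $\ell$ is the linear form with $L\circ\ell=0$ (both spaces have dimension $r+1$, and one contains the other by the Leibniz rule). Hence for $d\ge r+1$ the inverse system $(I^{\perp})_d$ equals $\ell_1^{\,d-r}\cdots\ell_t^{\,d-r}\cdot R_{d-t(d-r)}$, because the $\ell_i$ are pairwise coprime, and Macaulay duality gives $\dim(R/I)_d=\max\{0,\;d+1-t(d-r)\}$ exactly. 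Inserting this, your Hilbert-series computation finishes the proof. As written, however, your argument establishes only the shape of the resolution and the lower bound $\dim\mathrm{syz}(I)_{r+1+a}\ge s_1$, not the stated Betti numbers, so the central claim remains unproved without that input.
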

%%%%%%%%%%%%%%%%%%%%%%%%%%%%%%%%%%%%%%%%%%%%%%%%%%%%%%%%%%%%%%%%%%%%%%%%%%%%%%%%%

If $m$ is the remainder of $r{+}1$ divided by $t{-}1$, then 
$s_1=t{-}1{-}m>0$  and $s_2=m\geq 0$.
Hence $I$ always has syzygies of degree $r{+}1{+}a$.

%%%%%%%%%%%%%%%%%%%%%%%%%%%%%%%%%%%%%%%%%%%%%%%%%%%%%%%%%%%%%%%%%%%%%%%%%%%%%%%%%
\begin{corollary}[\cite{ShSt97a}, Cor.~3.4]\label{C:PowerRegularity}
 The regularity of $R/I$ is $r{+}\lceil\frac{r{+}1}{t{-}1}\rceil-1$.
\end{corollary}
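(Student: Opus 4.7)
The plan is to read the regularity directly off the minimal free resolution of $R/I$ supplied by Proposition~\ref{P:ShSt}. Recall that for a graded module with minimal free resolution $F_\bullet$ in which $F_i=\bigoplus_j R(-a_{ij})$, one has $\reg(R/I)=\max_{i,j}\{a_{ij}-i\}$. From the resolution
\[
  0\ \longrightarrow\ R(-r{-}1{-}a)^{s_1}\oplus R(-r{-}2{-}a)^{s_2}
   \ \longrightarrow\ R(-r{-}1)^t\ \longrightarrow\ R\ \longrightarrow\ R/I\ \longrightarrow\ 0,
\]
the potentially contributing shifts are: $0$ from $F_0=R$; $(r{+}1)-1=r$ from $F_1$; $(r{+}1{+}a)-2=r{+}a{-}1$ from the first summand of $F_2$ (when $s_1>0$); and $(r{+}2{+}a)-2=r{+}a$ from the second summand of $F_2$ (when $s_2>0$). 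Since $a\geq 1$, the terms from $F_2$ dominate whenever they appear.

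I would then split into two cases according to whether $t{-}1$ divides $r{+}1$, which is precisely the condition controlling whether $s_2=0$. If $(t{-}1)\nmid(r{+}1)$, then $s_2=r{+}1-(t{-}1)a>0$, so the $R(-r{-}2{-}a)^{s_2}$ summand contributes, yielding $\reg(R/I)=r{+}a$; in this case $\lceil\frac{r{+}1}{t{-}1}\rceil=a{+}1$, so indeed $r{+}a=r{+}\lceil\frac{r{+}1}{t{-}1}\rceil-1$. If instead $(t{-}1)\mid(r{+}1)$, then $a=\frac{r{+}1}{t{-}1}$, so $s_2=0$ and $s_1=(t{-}1)a+t-r-2=t-1>0$; the maximal contribution is $r{+}a{-}1$, and since $\lceil\frac{r{+}1}{t{-}1}\rceil=a$, this again equals $r{+}\lceil\frac{r{+}1}{t{-}1}\rceil-1$.

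Because Proposition~\ref{P:ShSt} supplies a \emph{minimal} free resolution, there is no cancellation to worry about and each listed shift really appears in the computation of regularity. The only mild obstacle is keeping track of the boundary case $s_2=0$ and verifying that in that situation $s_1>0$ so the resolution still has a nontrivial term in homological degree two; the arithmetic above handles this. Beyond this bookkeeping, the proof is a direct reading of shifts from the resolution together with the two-case analysis of the ceiling $\lceil\frac{r{+}1}{t{-}1}\rceil$.
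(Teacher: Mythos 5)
Your proof is correct and is exactly the intended derivation: the paper defines regularity as $\max_{i,j}\{a_{ij}-i\}$ over the minimal free resolution, states this corollary as an immediate consequence of Proposition~\ref{P:ShSt} (citing~\cite{ShSt97a} without reproducing the argument), and your case analysis on whether $t{-}1$ divides $r{+}1$ correctly tracks the vanishing of $s_2$ and the positivity of $s_1$. Nothing is missing.
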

%%%%%%%%%%%%%%%%%%%%%%%%%%%%%%%%%%%%%%%%%%%%%%%%%%%%%%%%%%%%%%%%%%%%%%%%%%%%%%%%%

%%%%%%%%%%%%%%%%%%%%%%%%%%%%%%%%%%%%%%%%%%%%%%%%%%%%%%%%%%%%%%%%%%%%%%%%%%%%%%%%%
\begin{remark}\label{R:all_monoms}
 As $R/I$ is a finite-length module, the highest degree of a nonzero
 element in $R/I$ equals the regularity of $R/I$~\cite[Cor.~4.4]{EisenbudSyz}.  
 It follows from Corollary~\ref{C:PowerRegularity} that $I$ contains all monomials of degree at least 
 $r{+}\lceil\frac{r{+}1}{t{-}1}\rceil$, and thus the ideal $IS\subset S$ contains all monomials of $S$ where the
 degree in $x,y$ is at least $r{+}\lceil\frac{r{+}1}{t{-}1}\rceil$. 
\end{remark}
%%%%%%%%%%%%%%%%%%%%%%%%%%%%%%%%%%%%%%%%%%%%%%%%%%%%%%%%%%%%%%%%%%%%%%%%%%%%%%%%%

Tensoring the minimal free resolution~\eqref{Eq:LMFR} of $R/I$ with $S$ gives a minimal free
resolution of $IS$ (as $S$ is a flat $R$-module).
Taking Euler-Poincar\'e characteristic gives a formula for the multiplicity
of the scheme defined by $IS$, which is the Hilbert polynomial of $S/IS$,
\[
   s_1\tbinom{d-(r+1+a)+2}{2}+s_2\tbinom{d-(r+2+a)+2}{2}\ -\ 
   t\tbinom{d-(r+1)+2}{2}\ +\ \tbinom{d+2}{2}\ .
\]
This simplifies nicely.

%%%%%%%%%%%%%%%%%%%%%%%%%%%%%%%%%%%%%%%%%%%%%%%%%%%%%%%%%%%%%%%%%%%%%%%%%%%%%%%%%
\begin{corollary}\label{C:multiplicity}
 The multiplicity of the scheme defined by $I$ is $\binom{a{+}r{+}2}{2}-t\binom{a{+}1}{2}$. 
\end{corollary}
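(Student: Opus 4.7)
The strategy is to observe that the Hilbert polynomial of $S/IS$, which equals the multiplicity of the scheme defined by $IS$, is in fact a constant in $d$---this is because that scheme is zero-dimensional, supported at the single point $[0\!:\!0\!:\!1]\in\P^2$ where all of the linear forms $L_i$ vanish. Consequently, I may evaluate the Euler--Poincar\'e expression displayed just before the corollary at any convenient integer value of $d$, and the answer will be the multiplicity.

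The natural choice is $d=r+a$. At that value, the two binomial coefficients $\binom{d-(r+1+a)+2}{2}$ and $\binom{d-(r+2+a)+2}{2}$ become $\binom{1}{2}$ and $\binom{0}{2}$, both of which are zero, so the entire contribution from $F_1 = S(-r-1-a)^{s_1}\oplus S(-r-2-a)^{s_2}$ drops out. The remaining two terms simplify immediately to
\[
  \binom{r+a+2}{2}\,-\,t\binom{a+1}{2},
\]
which is precisely the formula in the statement.

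The only point that needs verification is that the expression really is constant in $d$, and there are two clean justifications. Geometrically, $S/IS$ is the homogeneous coordinate ring of a fat point in $\P^2$, so its Hilbert polynomial has degree zero. Algebraically, one can check by hand that the coefficients of $d^2$ and of $d$ in the Euler--Poincar\'e expression vanish; this reduces to the two identities $s_1 + s_2 = t-1$ and $a s_1 + (a+1)s_2 = r+1$, both of which follow at once from the definitions of $s_1$, $s_2$, and $a$ in Proposition~\ref{P:ShSt}. I do not anticipate a real obstacle, since the substantive work---constructing the minimal free resolution---has already been done, and what remains is a short evaluation.
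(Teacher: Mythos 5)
Your proposal is correct and follows the paper's own route: the paper derives the same Euler--Poincar\'e expression from the resolution~\eqref{Eq:LMFR} and simply asserts that it ``simplifies nicely'' to the stated constant. Your evaluation at $d=r+a$ (after checking constancy via $s_1+s_2=t-1$ and $as_1+(a+1)s_2=r+1$) is just a clean way of carrying out that simplification, and both identities check out.
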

%%%%%%%%%%%%%%%%%%%%%%%%%%%%%%%%%%%%%%%%%%%%%%%%%%%%%%%%%%%%%%%%%%%%%%%%%%%%%%%%%

%%%%%%%%%%%%%%%%%%%%%%%%%%%%%%%%%%%%%%%%%%%%%%%%%%%%%%%%%%%%%%%%%%%%%%%%%%%%%%%%%
\subsection{Curves in a pencil}\label{SS:pencil}
Now we suppose that $G_1,\ldots,G_N$ are forms of degree $n$ that underlie the edges of $\Delta$, all of
which are incident on the point $\upsilon=[0:0:1]$.
Suppose that these forms define \defcolor{$s$} distinct algebraic curves, that $G_1$ and $G_2$ are relatively
prime, and each form $G_i$ lies in the linear span of $G_1$ and $G_2$, so the curves lie in a pencil.

%%%%%%%%%%%%%%%%%%%%%%%%%%%%%%%%%%%%%%%%%%%%%%%%%%%%%%%%%%%%%%%%%%%%%%%%%%%%%%%%%
\begin{proposition}\label{P:Pencil}
 Set $\defcolor{t}:=\min\{s,r{+}2\}$, and suppose that $G_1,\ldots,G_t$ define distinct curves.
 Then the ideal $\defcolor{J}:=\langle G_i^{r+1}\mid i=1,\dotsc,N\rangle$ is minimally generated by
 $G_1^{r+1},\ldots,G_t^{r+1}$.   
 Set $\defcolor{a}:=\lfloor\frac{r+1}{t-1}\rfloor$.  
 A minimal free resolution of $S/J$ is given by
 \begin{equation}\label{Eq:MMFR}
     S((-r{-}1{-}a)n)^{s_1}\oplus S((-r{-}2{-}a)n)^{s_2} \ \longrightarrow\ S((-r{-}1)n)^t\ \longrightarrow\ S\,,
 \end{equation}
 where $s_1:=(t{-}1)a{+}t{-}r{-}2$ and $s_2:=r{+}1{-}(t{-}1)a$.
\end{proposition}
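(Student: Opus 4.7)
The plan is to reduce to Proposition~\ref{P:ShSt} via the ring map $\phi\colon R\to S$ where $R:=\R[y_1,y_2]$ (graded with $\deg y_i=n$) and $\phi$ sends $y_i\mapsto G_i$. Writing each $G_i=\alpha_i G_1+\beta_i G_2$ and setting $L_i:=\alpha_i y_1+\beta_i y_2\in R$, we have $\phi(L_i^{r+1})=G_i^{r+1}$, and $L_1,\ldots,L_t$ are pairwise non-proportional because $G_1,\ldots,G_t$ define distinct curves. The structural fact that drives the argument is that $S$ is a flat graded $R$-module: since $G_1,G_2$ are coprime forms in the UFD $S$ they form a regular sequence, so the Koszul complex on $y_1,y_2$ over $R$ remains exact after tensoring with $S$, yielding $\mathrm{Tor}^R_{\ge 1}(S,R/(y_1,y_2))=0$; for graded $R$-modules this suffices to conclude flatness of $S$ over $R$.

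First I would settle minimal generation. When $s\le r{+}2$ (so $t=s$), every $G_i$ with $i>t$ is a scalar multiple of some $G_j$ with $j\le t$, so $G_i^{r+1}$ lies in $\langle G_1^{r+1},\ldots,G_t^{r+1}\rangle$ trivially. When $s>r{+}2$ (so $t=r{+}2$), the $t$ powers $L_1^{r+1},\ldots,L_t^{r+1}$ of pairwise non-proportional linear forms are linearly independent in the $(r{+}2)$-dimensional space of degree-$(r{+}1)$ forms of $R$ (the fact noted just before Proposition~\ref{P:ShSt}), hence span it, so every $L_i^{r+1}$—and thus every $G_i^{r+1}$—is an $\R$-linear combination of the chosen generators. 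Minimality in both cases follows from the same linear independence: no $G_j^{r+1}$ lies in the degree-$(r{+}1)n$ piece of $\langle G_i^{r+1}\mid i\ne j\rangle$, which is the $\R$-span of the other generators.

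With generation in hand, apply Proposition~\ref{P:ShSt} to the ideal $I:=\langle L_1^{r+1},\ldots,L_t^{r+1}\rangle\subset R$ to obtain its minimal free resolution, and rescale so that $\deg y_i=n$, which multiplies every degree shift in the resolution by $n$. Tensoring the resolution with $S$ over $R$ preserves exactness by flatness; the cokernel $S\otimes_R R/I$ equals $S/IS=S/J$; and the shifts become the desired $(r{+}1)n$, $(r{+}1{+}a)n$, $(r{+}2{+}a)n$, producing exactly the resolution~\eqref{Eq:MMFR}. Minimality transfers because the matrix entries of the minimal $R$-resolution lie in $(y_1,y_2)\subset R$, so their images under $\phi$ lie in $(G_1,G_2)\subset\frakm=(x,y,z)$, the irrelevant ideal of $S$.

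The main obstacle is justifying flatness of $S$ over $R$ carefully, since $S$ is not finitely generated as an $R$-module; I would state and apply the graded version of the homological criterion for flatness over a polynomial ring, where vanishing of $\mathrm{Tor}^R_1$ against the residue field at the irrelevant ideal suffices. Everything else—tracking the two pencils through $\phi$, the rescaling by $n$, and the preservation of minimality—is routine bookkeeping.
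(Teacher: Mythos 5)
Your proposal is correct and follows essentially the same route as the paper: reduce to the Schenck--Stillman resolution over $\R[u_1,u_2]$ via the map $u_i\mapsto G_i$, use flatness of $S$ over this subring (which the paper obtains by citing Hartshorne's result on regular sequences, whereas you reprove it via the Koszul complex and the graded local criterion), and then base change, rescaling degrees by $n$ and noting minimality is preserved. The extra detail you supply on minimal generation and on transferring minimality of the resolution matches the paper's argument.
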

%%%%%%%%%%%%%%%%%%%%%%%%%%%%%%%%%%%%%%%%%%%%%%%%%%%%%%%%%%%%%%%%%%%%%%%%%%%%%%%%%

%%%%%%%%%%%%%%%%%%%%%%%%%%%%%%%%%%%%%%%%%%%%%%%%%%%%%%%%%%%%%%%%%%%%%%%%%%%%%%%%%
\begin{proof}
 Since the forms $G_1$ and $G_2$ are relatively prime, they form a regular sequence.
% ($G_1\neq 0$ and $G_2$ is not a zero divisor in $S/\langle G_1\rangle$).
 In this situation, Hartshorne~\cite{Ha} showed that the map $\varphi\colon\defcolor{T}:=\R[u_1,u_2]\to S$ defined
 by $u_i\mapsto G_i$ for $i=1,2$ is an injection, and that $S$ is flat as a $T$-module.

 Let $L_1,\dotsc,L_N$ be the linear forms in $T$ such that $\varphi(L_i)=G_i$ for $i=1,\dotsc,N$
 and let $I:=\varphi^{-1}(J)$, which is the ideal $\langle L_1^{r+1},\dotsc,L_N^{r+1}\rangle$.
 As $s$ of the $G_i$ define distinct curves, the corresponding $s$ linear forms are pairwise relatively prime, and
 their powers generate $I$. 
 Then $t=\min\{s,r{+}2\}$ of these powers are linearly independent and thus are  minimal generators of the
 ideal $I$. 
 As $G_1,\dotsc,G_t$ are distinct, the powers $L_1^{r+1},\dotsc,L_t^{r+1}$ minimally generate $I$.
 Since $\varphi$ is injective and $\varphi(T)=\R[G_1,G_2]$ contains the generators of $J$, we conclude that
 $J$ is minimally generated by $G_1^{r+1},\dotsc,G_t^{r+1}$.

 Applying $\varphi$ to the exact sequence~\eqref{Eq:LMFR} %of free $T$-modules 
 and extending scalars to $S$
 gives the sequence~\eqref{Eq:MMFR} of free $S$-modules.
% Observe that t
 The degrees change, as %the map 
 $\varphi$ is a map of graded rings only if 
 $\deg(u_i)=\deg(G_i)=n$.
 The sequence remains exact, as $S$ is flat over $\varphi(T)$, and so it is a resolution of $S/J$.
 It remains minimal, as no map has a component of degree zero.
\end{proof}
%%%%%%%%%%%%%%%%%%%%%%%%%%%%%%%%%%%%%%%%%%%%%%%%%%%%%%%%%%%%%%%%%%%%%%%%%%%%%%%%%%

%%%%%%%%%%%%%%%%%%%%%%%%%%%%%%%%%%%%%%%%%%%%%%%%%%%%%%%%%%%%%%%%%%%%%%%%%%%%%%%%%%
\begin{corollary}\label{C:Pencil_multiplicity}
Let $J,n,N,a,t,s_1,s_2$ be as in Proposition~\ref{P:Pencil}.  The spline module $C^r(\Delta)$ is free as an $S$-module.  More precisely,
\[
C^r(\Delta)\simeq S\oplus S(-(r+1)n)^{N-t}\oplus S(-(r+1+a)n)^{s_1} \oplus S(-(r+2+a)n)^{s_2}.
\]
Its Hilbert function is
\[
\tbinom{d+2}{2}+(N{-}t)\tbinom{d-(r+1)n+2}{2}+s_1\tbinom{d-(r+1+a)n+2}{2}+s_2\tbinom{d-(r+2+a)n+2}{2}.
\]
The multiplicity of the scheme defined by $J$ equals
$n^2\bigl(\binom{a{+}r{+}2}{2}-t\binom{a{+}1}{2}\bigr)$. 
The Hilbert polynomial for the spline module is
\[
N\tbinom{d-(r+1)n+2}{2}\ +\ n^2\bigl(\tbinom{a{+}r{+}2}{2}-t\tbinom{a{+}1}{2}\bigr)\,,
\]
where we consider these binomial coefficients as polynomials in $d$.  
The postulation number is $(r+1+\lceil \frac{r+1}{t-1}\rceil)n-3$.
\end{corollary}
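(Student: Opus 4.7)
The plan is to combine Proposition~\ref{P:exact_sequences} with the minimal free resolution of Proposition~\ref{P:Pencil}. Since $\Delta$ has a single interior vertex, Proposition~\ref{P:exact_sequences} gives $C^r(\Delta) \simeq S \oplus H_1(\calJ)$, where $H_1(\calJ)$ is the module of syzygies on the full list $(G_1^{r+1}, \dotsc, G_N^{r+1})$ of generators of $J := J(\upsilon)$. The first main step is to reduce to the minimal set of $t$ generators identified in Proposition~\ref{P:Pencil}. The redundant $G_i^{r+1}$ with $i > t$ are $\R$-linear combinations of $G_1^{r+1}, \dotsc, G_t^{r+1}$: when $s \leq r{+}2$ they are scalar multiples of minimal generators (from duplicated edge curves), and when $s > r{+}2$ each $G_i^{r+1}$ lies in the $(r{+}2)$-dimensional $\R$-span of the monomials $G_1^j G_2^{r+1-j}$. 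Each relation $G_i^{r+1} = \sum_j c_{ij} G_j^{r+1}$ with $c_{ij} \in \R$ yields a syzygy of degree $(r{+}1)n$, and these $N - t$ syzygies span a free submodule $S(-(r{+}1)n)^{N-t}$ that splits off of $H_1(\calJ)$: a splitting is obtained by zeroing the last $N - t$ coordinates of any syzygy via an appropriate combination of the extra syzygies. Hence $H_1(\calJ) \simeq S(-(r{+}1)n)^{N-t} \oplus \mbox{syz}(J)$, and Proposition~\ref{P:Pencil} identifies $\mbox{syz}(J) \simeq S(-(r{+}1{+}a)n)^{s_1} \oplus S(-(r{+}2{+}a)n)^{s_2}$, giving the claimed free decomposition of $C^r(\Delta)$. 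The Hilbert function formula follows by summing the shifted binomial Hilbert functions of the four free summands.

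To compute the multiplicity of $S/J$, I would take the alternating sum of Hilbert polynomials from the resolution in Proposition~\ref{P:Pencil},
\[
   \HP(S/J, d)\ =\ \tbinom{d+2}{2} - t\tbinom{d-(r+1)n+2}{2} + s_1\tbinom{d-(r+1+a)n+2}{2} + s_2\tbinom{d-(r+2+a)n+2}{2}.
\]
The identities $s_1 + s_2 = t - 1$ and $(t{-}1)a + s_2 = r{+}1$ force both the $d^2$ and $d$ coefficients to vanish, and simplifying the constant term yields $n^2\bigl(\binom{a+r+2}{2} - t\binom{a+1}{2}\bigr)$. A conceptually cleaner route for the $n^2$ factor is to observe that the resolution of $S/J$ arises from the resolution of $R/I$ in Proposition~\ref{P:ShSt} by base change along the flat map $\varphi\colon T \to S$ combined with an $n$-fold rescaling of all polynomial degrees; this scaling promotes the affine multiplicity $\binom{a+r+2}{2} - t\binom{a+1}{2}$ from Corollary~\ref{C:multiplicity} into a projective degree, and the quadratic dependence on $n$ reflects the Bezout degree $n^2$ of the base locus of the pencil $\langle G_1, G_2\rangle$. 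The Hilbert polynomial of $C^r(\Delta)$ then follows by summing the Hilbert polynomials of the four free summands and regrouping to expose the constant multiplicity.

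For the postulation number, each summand $S(-m)$ has Hilbert function equal to its polynomial $\binom{d-m+2}{2}$ for $d \geq m - 2$ and strictly smaller for $d \leq m - 3$. Since all four summands contribute nonnegatively with no cancellation, the postulation number of $C^r(\Delta)$ equals the maximum of $m - 3$ over the shifts $m$ that actually appear: namely $(r{+}2{+}a)n - 3$ when $s_2 > 0$, and $(r{+}1{+}a)n - 3$ when $s_2 = 0$. Since $\lceil(r{+}1)/(t{-}1)\rceil$ equals $a + 1$ when $(t{-}1) \nmid (r{+}1)$ (i.e., $s_2 > 0$) and equals $a$ otherwise, both cases collapse to $(r{+}1{+}\lceil(r{+}1)/(t{-}1)\rceil)n - 3$. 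I expect the main technical obstacle to be the Euler--Poincar\'e simplification producing the closed-form multiplicity; once that is in hand, the splitting argument and the postulation number computation are essentially structural bookkeeping.
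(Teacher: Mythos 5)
Your proposal is correct and follows essentially the same route as the paper's proof: apply Proposition~\ref{P:exact_sequences} to get $C^r(\Delta)\simeq S\oplus H_1(\calJ)$, split off a free summand $S(-(r{+}1)n)^{N-t}$ coming from the redundant generators, identify the remaining syzygy module with the leftmost (free) term of the resolution in Proposition~\ref{P:Pencil}, and read off the Hilbert data and postulation number from the shifts. You supply somewhat more detail than the paper does (the explicit vanishing of the $d^2$ and $d$ coefficients via $s_1+s_2=t-1$ and $(t{-}1)a+s_2=r{+}1$, and the case analysis on $s_2$ for the postulation number), all of which checks out.
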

%%%%%%%%%%%%%%%%%%%%%%%%%%%%%%%%%%%%%%%%%%%%%%%%%%%%%%%%%%%%%%%%%%%%%%%%%%%%%%%%%%

%%%%%%%%%%%%%%%%%%%%%%%%%%%%%%%%%%%%%%%%%%%%%%%%%%%%%%%%%%%%%%%%%%%%%%%%%%%%%%%%%%
\begin{proof}
 By Proposition~\ref{P:exact_sequences}, $C^r(\Delta)\simeq S\oplus H_1(\calJ)$ and $H_1(\calJ)$ is the module of
 syzygies on  $\{G_1^{r+1},\ldots,G_N^{r+1}\}$.  
 Let these be ordered so that $\{G_1^{r+1},\ldots,G_t^{r+1}\}$ minimally generate $J$, while
 each $G_{t+i}^{r+1}$ for $i=1,\dotsc,N{-}t$ is a linear combination of $\{G_1^{r+1},\ldots,G_t^{r+1}\}$.
%  $\{G_{t+1}^{r+1},\ldots,G_N^{r+1}\}$ can be written as polynomial combinations of the minimal generators.  
 Then
\[
   H_1(\calJ)\ \simeq\ S(-(r{+}1)n)^{N-t}\oplus\mbox{syz}(J(\upsilon))\,,
\]
with a copy of $S(-(r{+}1)n)$ encoding the expression of $G_{t+i}$ in terms of the minimal generators of $J$.  
The module $\mbox{syz}(J(\upsilon))$ is the leftmost module in the minimal free resolution of $S/J(\upsilon)$ given in
Proposition~\ref{P:Pencil}.  
It is free because $J(\upsilon)$ has projective dimension two.  
The structure of $C^r(\Delta)$ as a free $S$-module follows.  
We deduce the Hilbert function and polynomial from this.  The postulation number $d_0$ is the largest integer which is less than at least one of the roots of the polynomials appearing as numerators in the binomial coefficients in the expression defining the Hilbert function, hence
\[
d_0=\left\lbrace
\begin{array}{ll}
(r+1+a)n-3 & \mbox{if } s_2=0\\
(r+2+a)n-3 & \mbox{otherwise}
\end{array},
\right.
\]
which is the same as $(r+1+\lceil \frac{r+1}{t-1}\rceil)n-3$.
\end{proof}
%%%%%%%%%%%%%%%%%%%%%%%%%%%%%%%%%%%%%%%%%%%%%%%%%%%%%%%%%%%%%%%%%%%%%%%%%%%%%%%%%%

Observe that the multiplicity of the scheme defined by $J$ is the product of the multiplicity, $n^2$
of the scheme defined by $\langle G_1,\dotsc,G_N\rangle=\langle G_1,G_2\rangle$ and the multiplicity of the
scheme defined by powers of linear forms as in Corollary~\ref{C:multiplicity}.

%%%%%%%%%%%%%%%%%%%%%%%%%%%%%%%%%%%%%%%%%%%%%%%%%%%%%%%%%%%%%%%%%%%%%%%%%%%%%%%%%
\begin{remark}\label{R:no_geometry}
 The Hilbert function of the spline module $C^r(\Delta)$ when the forms
 underlying the edges lie in a pencil depends only on the numerical invariants $N,s,r,n$ and {\it not} on the
 geometry in $\R^2$ of  the curves underlying the edges.
 We illustrate this remark by considering several cases when $n=2$ so that the edges are conics that lie in a
 pencil. 

 Let $G_1,G_2,G_3\in\R[x,y,z]$ be nonproportional quadratic forms with $G_3\in J:=\langle G_1,G_2\rangle$, so that
 the three lie in a pencil, and suppose also that they vanish at %the point 
 $\upsilon=[0:0:1]$.
 Then $J$ defines a zero-dimensional subscheme of $\C\P^2$ of multiplicity four.
 The $G_i$ are real, so there are several possibilities for the scheme defined by $J$ 
 %as  we also consider the real affine (
 in $\R^2$ (where $z\neq 0$).
 Figure~\ref{F:pencils} shows four cell complexes $\Delta$ with $|\Delta|$ the unit disc having
 three faces and three edges defined by the quadratic forms $G_1,G_2$, and $G_3$ in $\R^2$.
%the affine $\R^2$ defined by $z\neq 0$.
 Their spline modules all have the same Hilbert function and polynomial, which is displayed in
 Table~\ref{T:pencils}. 
%%%%%%%%%%%%%%%%%%%%%%%%%%%%%%%%%%%%%%%%%%%%%%%%%%%%%%%%%%%%%%%%%%%%%%%%%%%%%%%%%
\begin{table}[htb]
 \caption{Hilbert function and polynomial of three conics in a pencil.}
 \label{T:pencils}
  \begin{tabular}{|r|rrrrrrrrrrrrrr|c|}
    \hline  
    $r\backslash d$\rule{0pt}{12pt}
       &0&1&2&3&4&  5&  6&  7&  8&   9&  10&  11&  12&  13&Polynomial\\\hline
    0&1&3&7&13&22&34&49&67&88&112&139&169&202&238&\raisebox{-6pt}{\rule{0pt}{18pt}}%
                 $\frac{3}{2}d^2-\frac{3}{2}d+4$\\
    1&1&3&6&10&15&21&30&42&57& 75& 96&120&147&177&\raisebox{-6pt}{\rule{0pt}{18pt}}%
                 $\frac{3}{2}d^2-\frac{15}{2}d+21$\\
    2&1&3&6&10&15&21&28&36&46& 58& 73& 91&112&136&\raisebox{-6pt}{\rule{0pt}{18pt}}%
                 $\frac{3}{2}d^2-\frac{27}{2}d+58$\\
    3&1&3&6&10&15&21&28&36&45& 55& 66& 78& 93&111&\raisebox{-6pt}{\rule{0pt}{18pt}}%
                 $\frac{3}{2}d^2-\frac{39}{2}d+111$\\
    4&1&3&6&10&15&21&28&36&45& 55& 66& 78& 91&105&\raisebox{-6pt}{\rule{0pt}{18pt}}%
                 $\frac{3}{2}d^2-\frac{51}{2}d+184$\\ 
%    5&1&3&6&10&15&21&28&36&45& 55& 66& 78& 91&105&\raisebox{-6pt}{\rule{0pt}{18pt}}%
%                 $\frac{3}{2}d^2-\frac{63}{2}d+273$\\
%    6&1&3&6&10&15&21&28&36&45& 55& 66& 78& 91&105&\raisebox{-6pt}{\rule{0pt}{18pt}}%
%                 $\frac{3}{2}d^2-\frac{75}{2}d+382$\\
\hline
 \end{tabular}
%     
%
%0&1&3&7&13&22&34&49&67&88&112&139&169&202 &238&277&319&364&412&463&(3/2)*d^2-(3/2)*d+4\\
%1&1&3&6&10&15&21&30&42&57& 75& 96&120&147&177&210&246&285&327&372&(3/2)*d^2-(15/2)*d+21\\
%2&1&3&6&10&15&21&28&36&46& 58& 73& 91&112&136&163&193&226&262&301&(3/2)*d^2-(27/2)*d+58\\
%3&1&3&6&10&15&21&28&36&45& 55& 66& 78& 93&111&132&156&183&213&246&(3/2)*d^2-(39/2)*d+111\\
%4&1&3&6&10&15&21&28&36&45& 55& 66& 78& 91&105&121&139&160&184&211&(3/2)*d^2-(51/2)*d+184\\
%5&1&3&6&10&15&21&28&36&45& 55& 66& 78& 91&105&120&136&153&171&192&(3/2)*d^2-(63/2)*d+273\\
%6&1&3&6&10&15&21&28&36&45& 55& 66& 78& 91&105&120&136&153&171&190&(3/2)*d^2-(75/2)*d+382\\
%7&1&3&6&10&15&21&28&36&45& 55& 66& 78& 91&105&120&136&153&171&190&(3/2)*d^2-(87/2)*d+507\\
\end{table}
%%%%%%%%%%%%%%%%%%%%%%%%%%%%%%%%%%%%%%%%%%%%%%%%%%%%%%%%%%%%%%%%%%%%%%%%%%%%%%%%%

 Starting from the upper left and moving clockwise in Figure~\ref{F:pencils}, 
 we first have $G_1=x^2-6xy+y^2-2xz+6yz$, $G_2=x^2+6xy+y^2-2xz-6yz$, and $G_3=5G_1+4G_2$.
 These vanish at the four real points $[0:0:1]$, $[0:2:1]$, and $[1:\pm 1:1]$.
 Next, we have $G_1=x^2+xy+y^2-2xz$, $G_2=x^2+xy-2xz+2yz$, and $G_3=3G_1+2G_2$.
 These vanish at the two real points $[0:0:1]$, $[2:0:1]$, and the two complex points
 $[2:2\sqrt{-1}:1],[2:-2\sqrt{-1}:1]$. 
%%%%%%%%%%%%%%%%%%%%%%%%%%%%%%%%%%%%%%%%%%%%%%%%%%%%%%%%%%%%%%%%%%%%%%%%%%%%%%%%%
\begin{figure}[htb]

  \begin{picture}(190,130)
    \put(0,0){\includegraphics{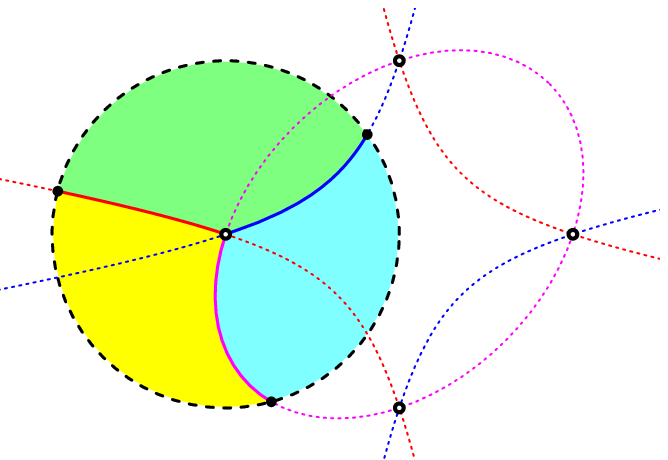}}
    \put(91,69){$G_1$} \put(34,78){$G_2$} \put(48,36){$G_3$} 
%    \put(154,47){\White{\rule{24pt}{14pt}}}\put(152,51){$[2:0:1]$}
%    \put(120, 10){\White{\rule{26pt}{12pt}}}\put(120, 12){$[1:-1:1]$}
%    \put(120,111){\White{\rule{26pt}{12pt}}}\put(120,113){$[1:1:1]$}
  \end{picture}
\qquad
  \begin{picture}(167,130)(0,-10)    %picture 112 high
    \put(0,0){\includegraphics{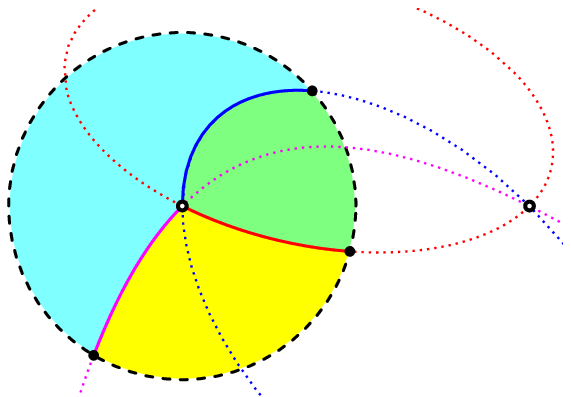}}
    \put(47,81){$G_1$} \put(23,38){$G_2$} \put(69,37){$G_3$} 
%    \put(142,40){\White{\rule{26pt}{12pt}}}\put(142,42){$[2:0:1]$}
  \end{picture}\newline
  \begin{picture}(172,112)%(0,-1)    %picture 112 high
    \put(0,0){\includegraphics{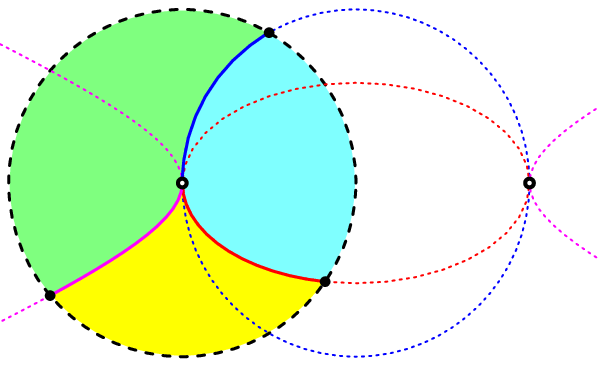}}
    \put(46,83){$G_1$} \put(69,37){$G_2$}  \put(20,37){$G_3$}
%    \put(124,52){$[2:0:1]$}
  \end{picture}
\qquad
  \begin{picture}(172,122)
    \put(0,0){\includegraphics{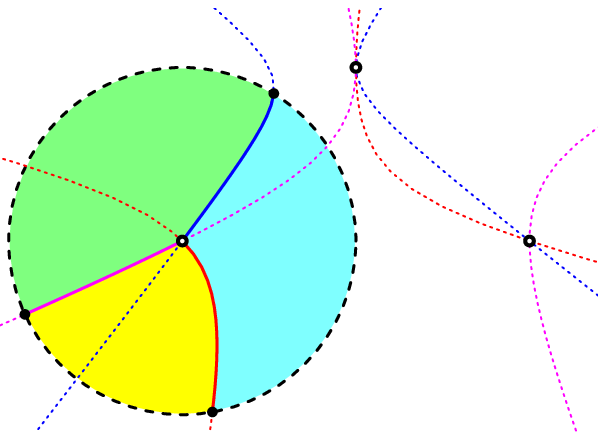}} % picture 122pt high
    \put(54,78){$G_1$} \put(65,29){$G_2$} \put(18,48){$G_3$} 
%    \put(126,46){$[2:0:1]$}
%    \put(105,102){$[1:1:1]$}
  \end{picture}

 \caption{(clockwise from upper left) 
      Four real points, two real and two complex points, two
      real and one double point, and two real double points.} 
 \label{F:pencils}
\end{figure}
%%%%%%%%%%%%%%%%%%%%%%%%%%%%%%%%%%%%%%%%%%%%%%%%%%%%%%%%%%%%%%%%%%%%%%%%%%%%%%%%%
For the third, let $G_1=2x^2+xy-2y^2-4xz+3yz$, $G_2=x^2+4xy-y^2-2xz-2yz$, and $G_3=6G_1-5G_2$.
These vanish at the points $[0:0:1]$, $[2:0:1]$, and $[1:1:1]$, sharing a common vertical tangent at the third
point, which has multiplicity two.
For our last pencil, let $G_1=x^2+y^2-2xz$, $G_2=x^2+3y^2-2xz$, and $G_3=4G_1-3G_2$.
These vanish only at the points $[0:0:1]$ and $[2:0:1]$, and share common vertical tangents at those points,
each of which has multiplicity two.
\end{remark}
%%%%%%%%%%%%%%%%%%%%%%%%%%%%%%%%%%%%%%%%%%%%%%%%%%%%%%%%%%%%%%%%%%%%%%%%%%%%%%%%%

%%%%%%%%%%%%%%%%%%%%%%%%%%%%%%%%%%%%%%%%%%%%%%%%%%%%%%%%%%%%%%%%%%%%%%%%%%%%%%%%%
\section{Semialgebraic splines with a single vertex II}\label{S:singleVertex}
Suppose that the complex $\Delta$ has a single interior vertex $\upsilon$,
but the forms defining the edges incident on $\upsilon$ are far from lying in a pencil in that
they have no other common zeroes in $\P^2(\C)$. 
We further suppose that the edge forms
are smooth at $\upsilon$ with distinct tangent directions.
Under these assumptions, we determine the Hilbert polynomial of the spline module by showing that the
multiplicities of the schemes $S/J(\upsilon)$ and $S/I$ are equal, where $I$ is generated by powers of the forms
defining the tangents at $\upsilon$.

Suppose that $\upsilon=[0:0:1]\in\P^2(\R)$ and there are $N$ interior edges incident on
$\upsilon$, defined by forms $G_1,\dotsc,G_N$, of degrees $n_1,\ldots,n_N$ with $[0:0:1]$ their only
common zero.
Expand each form $G_i$ as a polynomial in $z$,
\[
   G_i\ =\ \sum_{k=1}^{n_i} z^{n_i-k} G_{i,k}\,,
\]
where $G_{i,k}\in \R[x,y]$ has degree $k$.
Write $\defcolor{L_i}:=G_{i,1}$ for the coefficient of $z^{n_i-1}$ in $G_i$, which is
nonzero as $G_i$ is smooth at $\upsilon$. 
For an integer $r\geq 0$, let $\defcolor{J}:=J(\upsilon)$ be the ideal generated by
$G_1^{r+1},\dotsc,G_N^{r+1}$ and \defcolor{$I$} be the ideal generated by %the powers of the linear forms,
$L_1^{r+1},\dotsc,L_N^{r+1}$. 

%%%%%%%%%%%%%%%%%%%%%%%%%%%%%%%%%%%%%%%%%%%%%%%%%%%%%%%%%%%%%%%%%%%%%%%%%%%%%%%%%
\begin{theorem}\label{Th:generalPowers}
 When $L_1,\dotsc,L_N$ are distinct, the schemes $S/J$ and $S/I$ have the same
 Hilbert polynomial and degree.
\end{theorem}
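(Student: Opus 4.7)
The plan is to reduce the theorem to an equality of lengths of Artinian local rings at $\upsilon$ and then establish this equality via a standard-basis argument. Both $V(J)$ and $V(I)$ are zero-dimensional subschemes of $\P^2$ supported only at $\upsilon$: for $V(J)$ this is the hypothesis, and for $V(I)$ it holds because the distinct linear forms $L_1,\dotsc,L_N$ cut out lines through $\upsilon=[0:0:1]$ whose only common point in $\P^2$ is $\upsilon$. For each such scheme the Hilbert polynomial is the constant $\deg V$, equal to the length of the local ring at $\upsilon$. Passing to the complete local ring $\mathcal{O}:=\R[[x,y]]$ at $\upsilon$, with $g_i := G_i(x,y,1) = L_i + G_{i,2} + \dotsb + G_{i,n_i}$, and writing $J$, $I$ also for their extensions to $\mathcal{O}$, the theorem reduces to showing $\dim_\R \mathcal{O}/J = \dim_\R \mathcal{O}/I$.

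One direction is immediate from the $\mathfrak{m}$-adic filtration on $\mathcal{O}$, where $\mathfrak{m}=(x,y)$. Since $L_i^{r+1}$ is the initial form of $g_i^{r+1}$, we have $I\subseteq \mathrm{in}_{\mathfrak{m}}(J)$; passing to the associated graded preserves length of Artinian modules, so
\[
\dim_\R \mathcal{O}/J \;=\; \dim_\R \R[x,y]/\mathrm{in}_{\mathfrak{m}}(J) \;\leq\; \dim_\R \R[x,y]/I.
\]

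For the reverse inequality I would introduce the one-parameter deformation
\[
\tilde g_i(x,y,\lambda) \;:=\; L_i + \lambda G_{i,2} + \lambda^2 G_{i,3} + \dotsb + \lambda^{n_i-1} G_{i,n_i} \;\in\; \R[\lambda][x,y].
\]
For $\lambda\neq 0$ the rescaling $(x,y)\mapsto(\lambda x,\lambda y)$ sends the ideal $(\tilde g_1(\cdot,\lambda)^{r+1},\dotsc)$ to $\lambda^{-(r+1)}(g_1(\lambda x,\lambda y)^{r+1},\dotsc)$, so the fiber at $\lambda$ is isomorphic to $\mathcal{O}/J$ locally at the origin, while the fiber at $\lambda=0$ is $\mathcal{O}/I$. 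Proving this family flat over $\R[\lambda]$ would force the fiber lengths to agree. Flatness here is equivalent to $\mathrm{in}_{\mathfrak{m}}(J)=I$, i.e., to $\{g_i^{r+1}\}$ forming a standard basis of $J$ for the $\mathfrak{m}$-adic order.

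The main obstacle is verifying this standard-basis property. The cleanest path is to lift the Schenck--Stillman minimal free resolution of $\R[x,y]/I$ from Proposition~\ref{P:ShSt} to a free resolution over $\R[\lambda][x,y]$ that specializes correctly at $\lambda=0$. The crucial numerical inputs are: the syzygies of $\{L_i^{r+1}\}$ occur only in degrees $r+1+a$ and $r+2+a$; the discrepancies $g_i^{r+1}-L_i^{r+1}$ lie in $\mathfrak{m}^{r+2}$, strictly above the generating degree $r+1$; and $I\supseteq \mathfrak{m}^{r+\lceil(r+1)/(t-1)\rceil}$ by Remark~\ref{R:all_monoms}. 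Taken together, these allow an approximate syzygy among $\{g_i^{r+1}\}$ to be iteratively corrected into a genuine one, with the process terminating because the correction eventually lands in $\mathfrak{m}^M\subseteq I$. Once the standard basis is established, $\mathrm{in}_{\mathfrak{m}}(J)=I$, the two lengths coincide, and the equality of Hilbert polynomials and degrees follows.
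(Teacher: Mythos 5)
Your proposal is essentially the paper's own argument written in local coordinates: the weight-$(0,0,1)$ toric degeneration of Lemma~\ref{L:initial_containment} is exactly your rescaling family degenerating $J$ to its ideal of initial forms at $\upsilon$, and Lemma~\ref{L:syzygies} is precisely your standard-basis verification, carried out from the same two inputs (the second syzygies of $I$ occupy only two consecutive degrees by Proposition~\ref{P:ShSt}, and $I$ contains every form of degree at least $\reg(R/I)+1$). The one point your sketch glosses over is the case $N>r+2$, where the $L_i^{r+1}$ are not minimal generators and there are extra syzygies in degree $r+1$ not covered by your quoted degrees $r+1+a$ and $r+2+a$; the paper treats that case separately via the saturation Lemma~\ref{L:saturation}, though in your framework it is immediate, since then $I=\langle x,y\rangle^{r+1}$ already contains every possible initial form of an element of $J$.
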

%%%%%%%%%%%%%%%%%%%%%%%%%%%%%%%%%%%%%%%%%%%%%%%%%%%%%%%%%%%%%%%%%%%%%%%%%%%%%%%%%

We prove this in two steps.
In Subsection~\ref{SS:low} we show that when $r$ is small, these schemes coincide.
In Subsection~\ref{SS:high} we use toric degenerations to show that when $r$ is large, the
Hilbert polynomials are equal.

%%%%%%%%%%%%%%%%%%%%%%%%%%%%%%%%%%%%%%%%%%%%%%%%%%%%%%%%%%%%%%%%%%%%%%%%%%%%%%%%%
\begin{corollary}\label{Cor:generalPowers}
 The Hilbert polynomial of the spline module $C^r(\Delta)$ is
\[
  \sum_{i=1}^N\tbinom{d-(r+1)n_i+2}{2}\ +\ \tbinom{r+a+2}{2}-t\tbinom{a+1}{2}\,,
\]
 where $t:=\min\{N,r+2\}$ and $a:=\lfloor\frac{r+1}{t-1}\rfloor$.
\end{corollary}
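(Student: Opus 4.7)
The plan is to combine the dimension formula~\eqref{Eq:DimensionFormula} of Corollary~\ref{C:DimFormula} with Theorem~\ref{Th:generalPowers} and the multiplicity formula in Corollary~\ref{C:multiplicity}. Concretely, since $\Delta$ has a single interior vertex, Corollary~\ref{C:DimFormula} gives
\[
  \dim C^r_d(\Delta)\ =\ \sum_{i=1}^N \tbinom{d-(r+1)n_i+2}{2}\ +\ \dim(S/J)_d,
\]
and for $d\gg 0$ the term $\dim(S/J)_d$ coincides with $\HP(S/J,d)$, which is the constant equal to the multiplicity of the zero-dimensional scheme defined by $J$. Each binomial coefficient $\binom{d-(r+1)n_i+2}{2}$ is a polynomial in $d$, and for $d\gg 0$ it agrees with its polynomial value. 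Thus the right-hand side above, interpreted as polynomials in $d$, is the Hilbert polynomial $\HP(C^r(\Delta),d)$.

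Next I invoke Theorem~\ref{Th:generalPowers} to replace the multiplicity of $S/J$ with the multiplicity of $S/I$, where $I=\langle L_1^{r+1},\dotsc,L_N^{r+1}\rangle$. To apply Corollary~\ref{C:multiplicity} I need to know how many of these powers form a minimal generating set. The forms $L_1,\dotsc,L_N$ are pairwise distinct linear forms in the two-variable ring $R=\R[x,y]$, so they are pairwise coprime. The space of forms of degree $r+1$ in $R$ has dimension $r+2$, so any $r+2$ of the powers $L_i^{r+1}$ are linearly independent (for instance by a Vandermonde/Wronskian argument, as they represent distinct points on the rational normal curve of degree $r+1$), while every collection of more than $r+2$ such powers is linearly dependent. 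Hence $I$ is minimally generated by exactly $t=\min\{N,r+2\}$ of the $L_i^{r+1}$.

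Now Corollary~\ref{C:multiplicity}, applied with the parameter $a=\lfloor\frac{r+1}{t-1}\rfloor$, yields
\[
  \HP(S/I,d)\ =\ \tbinom{a+r+2}{2}\ -\ t\tbinom{a+1}{2}.
\]
Substituting this as the constant term in the dimension formula produces the displayed expression for $\HP(C^r(\Delta),d)$.

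The main subtlety is justifying that a minimal generating set of $I$ has cardinality exactly $t=\min\{N,r+2\}$ rather than some smaller number arising from an unexpected linear dependence. This reduces, after restricting to degree $r+1$, to the fact that distinct points on the rational normal curve of degree $r+1$ in $\P^{r+1}$ impose independent conditions until they exceed the ambient dimension, which is the only genuine verification required beyond quoting the results of Sections~\ref{S:SplineModules} and~\ref{S:pencil}.
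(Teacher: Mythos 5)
Your proposal is correct and follows essentially the same route as the paper, which simply cites Corollary~\ref{C:DimFormula} (implicitly together with Theorem~\ref{Th:generalPowers} and Corollary~\ref{C:multiplicity}, both just established); your extra verification that $I$ is minimally generated by exactly $t=\min\{N,r+2\}$ of the powers $L_i^{r+1}$ is the same observation the paper records just before Proposition~\ref{P:ShSt}.
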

\begin{proof}
This follows directly from Corollary~\ref{C:DimFormula}.
\end{proof}
%%%%%%%%%%%%%%%%%%%%%%%%%%%%%%%%%%%%%%%%%%%%%%%%%%%%%%%%%%%%%%%%%%%%%%%%%%%%%%%%%

%%%%%%%%%%%%%%%%%%%%%%%%%%%%%%%%%%%%%%%%%%%%%%%%%%%%%%%%%%%%%%%%%%%%%%%%%%%%%%%%%
\subsection{Low powers}\label{SS:low}

Let $G_1,\dotsc,G_N$, $L_1,\dotsc,L_N$, $I$, and $J$ be as above.
Suppose that $I$ is minimally generated by $t$ of the powers $L_i^{r+1}$.
We show that $S/J$ and $S/I$ define the same scheme when $2t\geq r+3$.
Let $\defcolor{\frakm}:=\langle x,y,z\rangle$ be the irrelevant ideal.
Recall that the saturation $(J:\frakm^\infty)$ of the ideal $J$ at $\frakm$ is 
$\{f\mid \exists k\mbox{ with }\frakm^k f\subset J\}$.
This defines the same projective scheme as does $J$.

%%%%%%%%%%%%%%%%%%%%%%%%%%%%%%%%%%%%%%%%%%%%%%%%%%%%%%%%%%%%%%%%%%%%%%%%%%%%%%%%%
\begin{lemma}\label{L:saturation}
 If $2t\geq r{+}3$, then $I = (J\colon \frakm^\infty)$.
\end{lemma}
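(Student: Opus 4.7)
The plan is to dehomogenize at $z=1$ and reduce to an equality of $\frakm_R$-primary ideals in $R:=\R[x,y]$, where $\frakm_R:=\langle x,y\rangle$. Set $g_i:=G_i(x,y,1)$, $\bar{J}:=\langle g_1^{r+1},\dots,g_N^{r+1}\rangle\subset R$, and $\bar{I}:=\langle L_1^{r+1},\dots,L_N^{r+1}\rangle\subset R$. Because $I$ is generated by elements of $R\subset S$, the variable $z$ is a nonzerodivisor on $S/I=(R/\bar{I})[z]$, so $I$ is $z$-saturated; since $V(I)=\{\upsilon\}\subset\P^2$ misses $\{z=0\}$, $I$ is in fact $\frakm$-saturated. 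As $(J\colon\frakm^\infty)$ is saturated by construction, and two $\frakm$-saturated ideals whose supports miss $\{z=0\}$ are equal iff their dehomogenizations coincide, it suffices to prove $\bar{I}=\bar{J}$ in $R$.

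First I would establish $\bar{J}\subset\bar{I}$. Writing $g_i=L_i+h_i$ with $h_i\in\frakm_R^2$, the binomial expansion gives
\[
  g_i^{r+1}-L_i^{r+1}\ =\ \sum_{k=1}^{r+1}\binom{r+1}{k}L_i^{r+1-k}h_i^k,
\]
and the $k$th summand has $\frakm_R$-order at least $(r+1-k)+2k=r+1+k\ge r+2$. By Remark~\ref{R:all_monoms}, $\bar{I}$ contains every monomial of degree at least $r+\lceil(r+1)/(t-1)\rceil$, and the hypothesis $2t\ge r{+}3$ is exactly the statement $\lceil(r+1)/(t-1)\rceil\le 2$. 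Hence $\frakm_R^{r+2}\subset\bar{I}$, so $g_i^{r+1}\in\bar{I}$ for every $i$.

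Next I would close the equality via an initial-ideal squeeze. Filter $R$ by powers of $\frakm_R$ and let $\ini(\bar{J})$ denote the ideal generated by the lowest-degree parts of elements of $\bar{J}$. Since $\ini(g_i^{r+1})=L_i^{r+1}$, we have $\bar{I}\subset\ini(\bar{J})$. Because $R/\bar{J}$ has finite length, passage to the $\frakm_R$-adic associated graded preserves length, so $\dim_\R R/\bar{J}=\dim_\R R/\ini(\bar{J})$. Combining the two inclusions yields
\[
  \dim_\R R/\ini(\bar{J})\ \le\ \dim_\R R/\bar{I}\ \le\ \dim_\R R/\bar{J}\ =\ \dim_\R R/\ini(\bar{J}),
\]
and equality throughout forces $\bar{J}=\bar{I}$.

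The hard part will be the reverse inclusion $\bar{I}\subset\bar{J}$: one cannot produce $L_i^{r+1}$ as an element of $\bar{J}$ term-by-term, since the ``error terms'' $L_i^r h_i$ need not lie in $\bar{J}$ individually. The initial-ideal squeeze sidesteps this by converting the containment into a colength comparison, and the hypothesis $2t\ge r{+}3$ enters precisely through Corollary~\ref{C:PowerRegularity} to supply the degree bound $r{+}2$ that the first inclusion requires.
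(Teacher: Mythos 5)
Your proof is correct, and its second half takes a genuinely different route from the paper's. The containment $\bar{J}\subset\bar{I}$ matches the paper's first step exactly: the paper expands $G_i^{r+1}$ in powers of $z$ and observes that every non-leading coefficient $K_{i,a}\in\R[x,y]$ has degree $a\ge r+2\ge r+\lceil\frac{r+1}{t-1}\rceil$, hence lies in $I$ by Remark~\ref{R:all_monoms}; your binomial expansion of $(L_i+h_i)^{r+1}$ is the dehomogenized version of the same computation. Where you diverge is in closing the equality. The paper localizes at $\upsilon$: writing $G_i^{r+1}=\sum_j A_{ij}L_j^{r+1}$ for $i=1,\dots,t$, it shows that the matrix of top $z$-coefficients of $A$ is the identity (using linear independence of $L_1^{r+1},\dots,L_t^{r+1}$), so $\det A$ is a unit in $S_{\langle x,y\rangle}$ and the localizations of $S/I$ and $S/J$ at $\upsilon$ agree. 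You instead pass to the tangent cone: $\bar{I}\subseteq\ini(\bar{J})$ because $\ini(g_i^{r+1})=L_i^{r+1}$, and the length identity $\dim_\R R/\bar{J}=\dim_\R R/\ini(\bar{J})$ squeezes all three colengths together, forcing $\bar{I}=\bar{J}$. Both arguments are sound; the paper's matrix computation is more elementary and self-contained, while your initial-ideal squeeze is essentially the $\frakm_R$-adic shadow of the toric-degeneration technique the paper deploys later in Subsection~\ref{SS:high} (the weight $\omega=(0,0,1)$ there picks out exactly the lowest-order forms at $\upsilon$), with the pleasant byproduct that $\bar{I}=\ini(\bar{J})$; notably your squeeze avoids the syzygy condition of Lemma~\ref{L:syzygies} precisely because the containment $\bar J\subset\bar I$ is available in this low-$r$ regime. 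Two small points you should make explicit: that the dehomogenization of $(J:\frakm^\infty)=(J:z^\infty)$ coincides with $\bar{J}$, so that proving $\bar{I}=\bar{J}$ really does suffice; and that $\ini(\bar{J})$ must be the ideal of initial forms of \emph{all} elements of $\bar{J}$ (as you correctly take it) for the identification of $R/\ini(\bar J)$ with the associated graded of $R/\bar J$ to hold.
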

%%%%%%%%%%%%%%%%%%%%%%%%%%%%%%%%%%%%%%%%%%%%%%%%%%%%%%%%%%%%%%%%%%%%%%%%%%%%%%%%%

%%%%%%%%%%%%%%%%%%%%%%%%%%%%%%%%%%%%%%%%%%%%%%%%%%%%%%%%%%%%%%%%%%%%%%%%%%%%%%%%%
\begin{corollary}\label{C:low_power}
 If $2t\geq r{+}3$, then $S/I$ and $S/J$ define the same scheme.
\end{corollary}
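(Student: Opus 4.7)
The plan is to invoke Lemma~\ref{L:saturation} and use the standard fact that two homogeneous ideals of $S$ define the same subscheme of $\P^2$ exactly when their saturations at the irrelevant ideal $\frakm=\langle x,y,z\rangle$ agree. Lemma~\ref{L:saturation} supplies the equality $I=(J:\frakm^\infty)$ under the hypothesis $2t\geq r+3$, so the entire content of the corollary is to verify that $I$ and $J$ have the same saturation.

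First I would observe that saturation at $\frakm$ is idempotent: for any homogeneous ideal $K$, one has $((K:\frakm^\infty):\frakm^\infty)=(K:\frakm^\infty)$. Applying this to $K=J$ together with Lemma~\ref{L:saturation} gives
\[
  (I:\frakm^\infty)\ =\ ((J:\frakm^\infty):\frakm^\infty)\ =\ (J:\frakm^\infty)\ =\ I.
\]
Hence $I$ is itself saturated and equals the saturation of $J$, so $I$ and $J$ cut out the same subscheme of $\P^2$, which is the statement that $S/I$ and $S/J$ define the same scheme.

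As a sanity check, one can see saturatedness of $I$ directly: the generators $L_1^{r+1},\dotsc,L_N^{r+1}$ all lie in the subring $\R[x,y]\subset S$, so $I=I_R\cdot S$, where $I_R\subset R:=\R[x,y]$ is the $\langle x,y\rangle$-primary ideal generated by the same powers. Then $S/I\cong(R/I_R)[z]$, so $z$ is a non-zero-divisor on $S/I$, which prevents $\frakm$ from being associated to $S/I$ and confirms that $I$ is saturated. There is no genuine obstacle here: all the work is in Lemma~\ref{L:saturation}, and the corollary follows at once from idempotence of saturation.
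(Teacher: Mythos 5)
Your proposal is correct and matches the paper's intent: the paper gives no separate proof of the corollary, treating it as immediate from Lemma~\ref{L:saturation} together with the standard fact (stated just before the lemma) that $(J:\frakm^\infty)$ defines the same projective scheme as $J$. Your idempotence observation and the direct check that $I$ is saturated are fine but not needed beyond that one-line deduction.
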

%%%%%%%%%%%%%%%%%%%%%%%%%%%%%%%%%%%%%%%%%%%%%%%%%%%%%%%%%%%%%%%%%%%%%%%%%%%%%%%%%

%%%%%%%%%%%%%%%%%%%%%%%%%%%%%%%%%%%%%%%%%%%%%%%%%%%%%%%%%%%%%%%%%%%%%%%%%%%%%%%%%
\begin{proof}[Proof of Lemma~\ref{L:saturation}]
 We first show that $J\subset I$.  Recall that $\deg(G_i)=n_i$.
 If we expand the form $G_i^{r+1}$ of degree $n_i(r{+}1)$ as a polynomial in decreasing powers of $z$, we obtain  
\[
   G_i^{r+1}\ =\ z^{(n_i-1)(r+1)}L_i^{r+1}\ +\ 
        \sum_{k=1}^{(n_i-1)(r+1)} z^{(n_i-1)(r+1)-k}K_{i,r+1+k}\,,
\]
 where $K_{i,a}\in\R[x,y]$ is homogeneous of degree $a$.
 Since this degree is at least $r{+}2$ and our hypothesis implies that 
 $2\geq (r{+}1)/(t{-}1)$, this degree is at least $r{+}\lceil\frac{r{+}1}{t{-}1}\rceil$.
 By Remark~\ref{R:all_monoms}, these polynomials $K_{i,a}$ lie in $I$.
 Since $L_i^{r+1}\in I$, we have that $G_i^{r+1}\in I$, and so $J\subset I$.

 As $\upsilon$ is the only zero of $J$, to show that 
 $I = (J\colon \frakm^\infty)$, we only need to show that the localizations at $\upsilon$ of $S/I$ and $S/J$
 are equal. 
 We assume that the forms have been ordered so that $L_1^{r+1},\dotsc,L_t^{r+1}$ are minimal generators
 of $I$.
 Let $J'\subset J$ be the ideal generated by $G_1^{r+1},\dotsc,G_t^{r+1}$.
 It suffices to show that $S/J'$ and $S/I$  have the same localization at $\upsilon$.
 Since $J'\subset J\subset I$, there are forms $A_{i,j}\in S=\R[x,y,z]$ such that 
 \begin{equation}\label{Eq:GintermsofL}
    G_i^{r+1}\ =\ \sum_{j=1}^t A_{i,j} L_j^{r+1}\,,
 \end{equation}
 for each $i=1,\dotsc,t$.
 To show that the localizations agree, we show that the matrix $A$ is invertible in 
 the localization $S_{\langle x,y\rangle}$ of $S$ at $\upsilon$, as 
 $\langle x,y\rangle$ defines the point $\upsilon$.
 
 Each form $A_{i,j}$ has degree $(n_i{-}1)(r{+}1)$.
 Let $A_{i,j}^{(n_i{-}1)(r{+}1)-k}$ denote the coefficient of $z^k$ in the expansion of $A_{i,j}$ as a polynomial in $z$.
 The highest power of $z$ appearing in~\eqref{Eq:GintermsofL} is $(n_i{-}1)(r{+}1)$.
 If we equate the coefficients of $z^{(n_i{-}1)(r{+}1)}$ in~\eqref{Eq:GintermsofL} (recalling that
 $G_i=z^{n_i-1}L_i+\dotsb$), we obtain
\[
    L_i^{r+1}\ =\ \sum_{j=1}^t A_{i,j}^0 L_j^{r+1}\,.
\]
 As these powers $L_1^{r+1},\dotsc,L_t^{r+1}$ are linearly independent, the matrix $A^0_{i,j}$ is the identity.

 In particular, the entries of the matrix $A$ that have a pure power of $z$ are exactly the diagonal entries.
 Thus its determinant has the form $z^{(-t+\sum_i n_i)(r+1)}+g$, where $g\in\langle x,y\rangle$, which implies that $A$
 is invertible in the local ring $S_{\langle x,y\rangle}$.
\end{proof}
%%%%%%%%%%%%%%%%%%%%%%%%%%%%%%%%%%%%%%%%%%%%%%%%%%%%%%%%%%%%%%%%%%%%%%%%%%%%%%%%%

\begin{remark}\label{R:Stiller}
Lemma~\ref{L:saturation} indicates how our results are complementary to Stiller's results in~\cite{Stiller83}.
His most general results in~\cite[\S~4]{Stiller83} require that the minimal generators of
$J(\upsilon)$, which have degree $n_i(r{+}1)$, are also minimal generators of the saturation of $J(\upsilon)$ (denoted $\scrI_X$ in~\cite{Stiller83}).
This assumption is also made in~\cite{Simis12}.  
Our assumptions that the edge forms are smooth at
$\upsilon$ and that $J(\upsilon)$ is supported only at $\upsilon$ will imply that, to the contrary, the saturation
of $J(\upsilon)$ is generated in degrees close to $r{+}1$.  
\end{remark}

%%%%%%%%%%%%%%%%%%%%%%%%%%%%%%%%%%%%%%%%%%%%%%%%%%%%%%%%%%%%%%%%%%%%%%%%%%%%%%%%%
\begin{remark}\label{R:rBig}
 The complex $\Delta$ in the left below has edges defined by the three homogeneous quadrics
 on the right.
 \[
   \raisebox{-55pt}{\begin{picture}(138,118)
     \put(0,0){\includegraphics{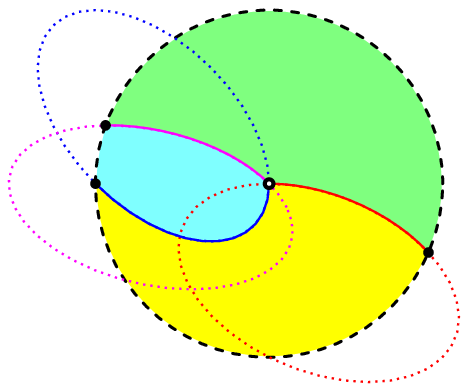}}
     \put(37,110){$G_1$} \put(129,35){$G_2$} \put(4,33){$G_3$} 
     \end{picture}}
  \qquad
   \raisebox{-2pt}{\begin{minipage}[b]{200pt}
    \[ \begin{array}{rcl}
      G_1&=& xz \ +\ x^2+xy+y^2\\\rule{0pt}{14pt}
      G_2&=& 2yz \ +\ x^2+xy+2y^2\\\rule{0pt}{14pt}
      G_3&=& \tfrac{3}{2}(x+y)z\ +\ x^2+xy+3y^2
     \end{array}\]
    \end{minipage}}
  \]
 Here $L_1=x$, $L_2=y$, and $L_3=x+y$ and the hypotheses of Lemma~\ref{L:saturation} hold for $r\leq 3$, hence
\[
   \bigl(\langle G_1^{r+1},G_2^{r+1},G_3^{r+1}\rangle\; :\; \frakm^\infty\bigr)
    \ =\ \langle L_1^{r+1},L_2^{r+1},L_3^{r+1}\rangle\,,
\]
 for $r=0,\dotsc,3$.
 However, for $r=4$, we do not have the containment
\[
   \langle G_1^5,G_2^5,G_3^5\rangle \ \subset\ \langle L_1^5,L_2^5,L_3^5\rangle\,.
\]
 If we set $J:=\langle G_1^5,G_2^5,G_3^5\rangle$, then 
\[
  (J\; :\; \frakm^\infty)\ =\ 
  \langle 5x^4y+10x^3y^2+10x^2y^3+5xy^4-y^5,\, x^5-y^5,\, y^6,\, xy^5,\, \defcolor{5x^2y^4+y^5z} \rangle\,.
\]
Each generator of $(J:\frakm^\infty)$ is in $\langle L_1^5,L_2^5,L_3^5\rangle$ except for the polynomial $5x^2y^4+y^5z$.
\end{remark}
%%%%%%%%%%%%%%%%%%%%%%%%%%%%%%%%%%%%%%%%%%%%%%%%%%%%%%%%%%%%%%%%%%%%%%%%%%%%%%%%%

%%%%%%%%%%%%%%%%%%%%%%%%%%%%%%%%%%%%%%%%%%%%%%%%%%%%%%%%%%%%%%%%%%%%%%%%%%%%%%%%%
\subsection{Distinct tangents}\label{SS:high}

The results of Subsection~\ref{SS:low} imply Theorem~\ref{Th:generalPowers} when $r$ is small
relative to $N$.
By Remark~\ref{R:rBig}, we cannot have $(J:\frakm^\infty)=I$ in general, so other arguments are needed.
We use toric degenerations to show that the schemes $S/J$ and $S/I$ have the same Hilbert polynomial.
We start with the following simple lemma.

%%%%%%%%%%%%%%%%%%%%%%%%%%%%%%%%%%%%%%%%%%%%%%%%%%%%%%%%%%%%%%%%%%%%%%%%%%%%%%%%%
\begin{lemma}\label{L:simple}
 Suppose that $I$ is an ideal of $S$ defining a scheme supported at $[0:0:1]$.
 Then $(I:\frakm^\infty)=(I:z^\infty)$.
\end{lemma}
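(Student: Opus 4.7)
The plan is to prove the two containments separately. The inclusion $(I:\frakm^\infty)\subset (I:z^\infty)$ is immediate from $z\in\frakm$: if $\frakm^k f\subset I$, then $z^k\in\frakm^k$ gives $z^k f\in I$.

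For the reverse inclusion, the key observation is that the support hypothesis forces pure powers of $x$ and of $y$ to lie in $I$ itself. Since $\mathrm{Proj}(S/I)\subset\P^2$ is supported only at $[0{:}0{:}1]$, it is empty on each of the open affine charts $\{x\neq 0\}$ and $\{y\neq 0\}$. Unwinding this, $1$ lies in the degree-zero piece of each localization $I[x^{-1}]$ and $I[y^{-1}]$, which produces integers $a,b\geq 1$ with $x^a\in I$ and $y^b\in I$. I view this translation from the set-theoretic hypothesis to a concrete algebraic statement inside $I$ (not merely in its saturation $(I:\frakm^\infty)$) as the main point; the rest is combinatorial bookkeeping.

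With $x^a,y^b\in I$ in hand, take $f$ with $z^kf\in I$ and set $N:=a+b+k$. A pigeonhole argument then finishes the proof: for any monomial $m=x^py^qz^r$ of degree $N$, at least one of $p\geq a$, $q\geq b$, or $r\geq k$ must hold, since otherwise $N=p+q+r<a+b+k$. In the first two cases $m\in I$, so $mf\in I$; in the third, $mf=(m/z^k)(z^kf)\in I$. Since such monomials generate $\frakm^N$ as an $S$-module, $\frakm^N f\subset I$, and therefore $f\in(I:\frakm^\infty)$. No further obstacle is anticipated.
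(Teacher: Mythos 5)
Your proof is correct and follows essentially the same route as the paper's: both reduce the support hypothesis to the concrete fact that powers of $x$ and $y$ lie in $I$ itself (the paper phrases this as $\langle x,y\rangle^k\subset I$ via the Nullstellensatz, you via emptiness on the charts $x\neq 0$, $y\neq 0$), and then run the same pigeonhole on monomial degrees to show $\frakm^N f\subset I$. No issues.
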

%%%%%%%%%%%%%%%%%%%%%%%%%%%%%%%%%%%%%%%%%%%%%%%%%%%%%%%%%%%%%%%%%%%%%%%%%%%%%%%%%

%%%%%%%%%%%%%%%%%%%%%%%%%%%%%%%%%%%%%%%%%%%%%%%%%%%%%%%%%%%%%%%%%%%%%%%%%%%%%%%%%
\begin{proof}
 We always have $(I:\frakm^\infty)\subset(I:z^\infty)$.
 Since the only zero of $I$ is $[0:0:1]$, there is a $k>0$ such that $\langle x,y\rangle^k\subset I$.
 Let $f\in(I:z^\infty)$ so that there is some $\ell>0$ with $fz^\ell\in I$.
 Let $x^ay^bz^c$ be a monomial of degree at least $k+\ell$.
 Then either $a+b\geq k$ or $c\geq\ell$.
 In either case, $fx^ay^bz^c\in I$, and so $f\in(I:\frakm^k)\subset (I:\frakm^\infty)$, which completes the
 proof. 
\end{proof}
%%%%%%%%%%%%%%%%%%%%%%%%%%%%%%%%%%%%%%%%%%%%%%%%%%%%%%%%%%%%%%%%%%%%%%%%%%%%%%%%%

We recall the notion of initial degeneration of an ideal, which is explained in~\cite[\S~15]{Eisenbud}.
Given an integer vector $\defcolor{\omega}=(\omega_1,\omega_2,\omega_3)\in\Z^3$, a monomial $x^ay^bz^c$ of $S$
has a weight $a\omega_1+b\omega_2+c\omega_3$.
For a polynomial $F\in S$, let \defcolor{$\ini_\omega F$} be the sum of terms of $F$ whose monomials have
the largest weight with respect to $\omega$ among all terms of $F$.
The initial ideal of an ideal $I$ of $S$ with respect to $\omega$ is
\[
   \defcolor{\ini_\omega I}\ :=\ \langle \ini_\omega F \mid F\in I \rangle\,.
\]
The utility of this definition is that there is a flat degeneration of the scheme defined by $I$ into the
scheme defined by $\ini_\omega I$.
Consequently, $S/I$ and $S/\ini_\omega I$ have the same Hilbert function.
This flat degeneration is induced by the torus action on $S$ where
$\tau\in\C^\times$ acts on a monomial $x^ay^bz^c$ by $\tau^{-(a\omega_1+b\omega_2+c\omega_3)}x^ay^bz^c$, and
is called a toric degeneration.

We now fix the weight vector $\omega:=(0,0,1)$, so that $\ini_\omega F$ consists of the terms of $F$ 
with the highest power of $z$.
Let $G_1,\dotsc,G_N$ be forms in $S$ whose only common zero is $[0:0:1]$, so that 
the radical of  $J:=\langle G_1,\dotsc,G_N\rangle$ is $\langle x,y\rangle$.
For each $i$, let $c_i$ be the highest power of $z$ occurring in $G_i$ and define $\defcolor{F_i}\in\R[x,y]$ to
be the coefficient of $z^{c_i}$ in $G_i$, so that $z^{c_i}F_i=\ini_\omega G_i$.
Set $I:=\langle F_1,\dotsc,F_N\rangle$.

%%%%%%%%%%%%%%%%%%%%%%%%%%%%%%%%%%%%%%%%%%%%%%%%%%%%%%%%%%%%%%%%%%%%%%%%%%%%%%%%%
\begin{lemma}\label{L:initial_containment}
 If\/ $\ini_\omega J\subset I$, then $S/J$ and $S/I$ have the same Hilbert polynomial.
\end{lemma}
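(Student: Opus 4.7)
The plan is to pass to the affine chart $z = 1$ and leverage associated graded ideals. The toric degeneration statement recalled just before the lemma gives $\HP(S/J) = \HP(S/\ini_\omega J)$, so it suffices to show $\HP(S/J) = \HP(S/I)$ directly. Since $V(J) = \{[0:0:1]\}$ is a zero-dimensional projective scheme lying entirely in the affine chart $\{z = 1\}$, its Hilbert polynomial is the constant equal to the length of the corresponding local ring:
\[
  \HP(S/J) \;=\; \dim_\R \R[x,y]/J|_{z=1}\,,
\]
where $J|_{z=1}$ denotes the dehomogenization of $J$ at $z = 1$.

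Next I argue $V(I) = \{[0:0:1]\}$. The hypothesis $\ini_\omega J \subset I$ gives $V(I) \subset V(\ini_\omega J)$, and the latter is zero-dimensional because its Hilbert polynomial equals that of $S/J$, which is constant. Since $I$ is generated by forms $F_1,\ldots,F_N \in \R[x,y]$, any common zero $[a_0:b_0] \in \P^1$ of the $F_i$ would produce a whole projective line $\{[a_0:b_0:c]\}$ in $V(I)$, contradicting zero-dimensionality. Thus the $F_i$ have no common zero in $\P^1$, the ideal $\langle F_1,\ldots,F_N\rangle \subset \R[x,y]$ is $\frakm$-primary for $\frakm = \langle x,y\rangle$, and
\[
  \HP(S/I) \;=\; \dim_\R \R[x,y]/\langle F_1,\ldots,F_N\rangle\,.
\]

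The heart of the proof is the identity $\mathrm{gr}_\frakm(J|_{z=1}) = \langle F_1,\ldots,F_N\rangle$, where $\mathrm{gr}_\frakm$ denotes the initial ideal with respect to the $\frakm$-adic filtration. For any homogeneous $H \in S$, writing $H = \sum_k z^k P_k$ with $P_k \in \R[x,y]$ homogeneous of complementary degree, the highest-$z$ term $\ini_\omega H = z^c P_c$ dehomogenizes to $P_c$, which is exactly the lowest-degree part of $H|_{z=1}$. A homogenization/truncation argument extends this to ideals, yielding $(\ini_\omega J)|_{z=1} = \mathrm{gr}_\frakm(J|_{z=1})$. On one side, $(\ini_\omega J)|_{z=1}$ contains each $F_i = (\ini_\omega G_i)|_{z=1}$; on the other, the hypothesis $\ini_\omega J \subset I$ forces $(\ini_\omega J)|_{z=1} \subset I|_{z=1} = \langle F_i\rangle$, so $\mathrm{gr}_\frakm(J|_{z=1}) = \langle F_1,\ldots,F_N\rangle$.

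The classical fact that an Artinian local ring has the same $\R$-vector space length as its $\frakm$-adic associated graded closes the argument:
\[
  \dim_\R \R[x,y]/J|_{z=1} \;=\; \dim_\R \R[x,y]/\mathrm{gr}_\frakm(J|_{z=1}) \;=\; \dim_\R \R[x,y]/\langle F_i\rangle\,,
\]
giving $\HP(S/J) = \HP(S/I)$. The main point requiring care is the ideal-level identification $(\ini_\omega J)|_{z=1} = \mathrm{gr}_\frakm(J|_{z=1})$: one must pass from generators to arbitrary ideal elements and check that dehomogenization commutes with extracting lowest-degree parts at the level of ideals, not just individual forms.
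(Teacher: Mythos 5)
Your argument is correct, but it takes a genuinely different route from the paper's. The paper stays projective: it first notes $I\subset(\ini_\omega J\colon z^\infty)=(\ini_\omega J\colon \frakm^\infty)$ (the last equality by Lemma~\ref{L:simple}, after checking $\sqrt{\ini_\omega J}=\langle x,y\rangle$), observes that $I$ is already $\frakm$-saturated because it is extended from $\R[x,y]$, and then saturates the hypothesis $\ini_\omega J\subset I$ to get $(\ini_\omega J\colon\frakm^\infty)=I$; since saturation and flat degeneration both preserve the Hilbert polynomial, the lemma follows. You instead work in the affine chart: you identify $(\ini_\omega J)|_{z=1}$ with the ideal of leading (lowest-degree) forms of $J|_{z=1}$ at the origin, use the two containments to pin that tangent-cone ideal down as $\langle F_1,\dotsc,F_N\rangle$, and finish with the equality of lengths of an Artinian local ring and of its associated graded ring. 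This buys something real: your main chain of equalities does not use the flat-degeneration statement at all---you invoke it only to see that $V(\ini_\omega J)$ is zero-dimensional, and even that could be replaced by the paper's observation that $\langle x,y\rangle^k\subset\ini_\omega J$---and it makes explicit the tangent-cone interpretation that the paper only gestures at in a later remark on local multiplicities. The cost is the ideal-level identity $(\ini_\omega J)|_{z=1}=\langle\,\text{leading forms of }J|_{z=1}\,\rangle$, which you rightly flag as the delicate point but do not prove. It is true and standard: since $J$ is homogeneous, any $f\in J|_{z=1}$ is $H|_{z=1}$ for a \emph{homogeneous} $H=\sum_j z^{M-j}f_j\in J$ (the degree-$M$ homogenization of $f$, obtained by rescaling the graded pieces of a general preimage by powers of $z$), and then $\ini_\omega H=z^{M-m}f_m$ dehomogenizes exactly to the leading form $f_m$ of $f$; the reverse containment is immediate on generators. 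That is the one step you would need to write out in full, and once it is, your proof is complete.
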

%%%%%%%%%%%%%%%%%%%%%%%%%%%%%%%%%%%%%%%%%%%%%%%%%%%%%%%%%%%%%%%%%%%%%%%%%%%%%%%%%

%%%%%%%%%%%%%%%%%%%%%%%%%%%%%%%%%%%%%%%%%%%%%%%%%%%%%%%%%%%%%%%%%%%%%%%%%%%%%%%%%
\begin{proof}
 We first observe that $\langle x,y\rangle$ is the radical of $\ini_\omega J$.
 Since $\sqrt{J}=\langle x,y\rangle$, there is some $k$ with 
 $\langle x,y\rangle^k\subset J$.
 As $\langle x,y\rangle^k$ is a monomial ideal, we have that 
 $\ini_\omega\langle x,y\rangle^k=\langle x,y\rangle^k$
 and hence $\langle x,y\rangle^k\subset \ini_\omega J$, which shows that 
 $\sqrt{\ini_\omega J}=\langle x,y\rangle$.

 Since $\ini_\omega G_i=z^{c_1}F_i$, we have $I\subset (\ini_\omega J\colon z^\infty)$.
 By Lemma~\ref{L:simple}, $( \ini_\omega J : z^\infty)=(\ini_\omega J\colon \frakm^\infty)$, so we have that 
 $I\subset (\ini_\omega J\colon \frakm^\infty)$.

 As $I$ is defined by polynomials in $x$ and $y$, if $z^cf\in I$, then $f\in I$, so that $I$ is
 saturated with respect to $\frakm=\langle x,y,z\rangle$.
 Saturating the inclusion $\ini_\omega J\subset I$ gives $(\ini_\omega J\colon \frakm^\infty)\subset I$.  Thus $(\ini_\omega J\colon \frakm^\infty)= I$ and $S/I$ and $S/\ini_\omega J$ have the same Hilbert polynomial.
 The lemma follows as $S/\ini_\omega J$ and $S/J$ have the same Hilbert polynomial, by flatness.
\end{proof}
%%%%%%%%%%%%%%%%%%%%%%%%%%%%%%%%%%%%%%%%%%%%%%%%%%%%%%%%%%%%%%%%%%%%%%%%%%%%%%%%%

We need to have that $\ini_\omega J\subset I$ to apply Lemma~\ref{L:initial_containment}.
By construction, the initial forms $\ini_\omega G_i=z^{c_i}F_i$  of the generators of $J$ lie in $I$.
To show that $\ini_\omega J\subset I$, we must understand what happens when there is cancellation of these
initial forms, which may be accomplished by understanding the syzygies of $I$

To that end, suppose that $F_1,\dotsc,F_N$ are minimal generators for $I$.
Write $a_{1i}$ for the degree of $F_i$.  
An ideal $I\subset S$ is \demph{Cohen-Macaulay} if the codimension of $S/I$ is equal to its projective
dimension.  
The ring $\R[x,y]/I$ has finite length (since $\sqrt{I}=\langle x,y \rangle$), so $I$ has projective dimension
two, which does not change if $I$ is considered as an ideal of $S$.  
Since $I$ has codimension two, it is Cohen-Macaulay and a structure theorem due to
Hilbert-Burch~\cite[Thm.~20.15]{Eisenbud} says that $S/I$ has a minimal free resolution of the form, 
 \begin{equation}\label{Eq:H-B}
   \bigoplus_{i=1}^{N-1}  S(-a_{2j})\ \longrightarrow\ 
   \bigoplus_{i=1}^{N}  S(-a_{1j})\ \longrightarrow\ S\,.
 \end{equation}
%

%%%%%%%%%%%%%%%%%%%%%%%%%%%%%%%%%%%%%%%%%%%%%%%%%%%%%%%%%%%%%%%%%%%%%%%%%%%%%%%%%
\begin{lemma}\label{L:syzygies}
 Given $I,J$ and $F_1,\dotsc,F_N$ as above,
 if $\max_{s,t}|a_{2s}-a_{2t}|\leq 2$, then $\ini_\omega J\subset I$.
\end{lemma}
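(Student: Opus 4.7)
The plan is to combine Lemma~\ref{L:initial_containment} with a Gr\"obner-style syzygy-lifting argument. By Lemma~\ref{L:initial_containment} it suffices to prove $\ini_\omega J \subseteq I$. Every element of $\ini_\omega J$ has the form $z^k Q$ with $Q \in \R[x,y]$, and since $I$ is extended from an ideal of $\R[x,y]$, the containment $z^k Q \in I$ is equivalent to $Q \in I$. So the task reduces to: whenever $F = \sum_i H_i G_i \in J$ has leading $z$-coefficient $Q$, then $Q \in I$.

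I would proceed by induction on the ``nominal top $z$-weight'' $W := \max_i(\deg_z H_i + c_i)$ taken over expressions $F = \sum_i H_i G_i$. Writing $G_i = z^{c_i}F_i + G_i'$ with $\deg_z G_i' < c_i$, and $(H_i)_a$ for the coefficient of $z^a$ in $H_i$, the coefficient of $z^W$ in $\sum_i H_i G_i$ equals
\[
  \sum_{i\,:\,\deg_z H_i + c_i = W}(H_i)_{W-c_i}\,F_i\,.
\]
If this is nonzero, then $W$ equals the top $z$-degree of $F$ and $Q$ is visibly a $\R[x,y]$-combination of the $F_i$, so $Q \in I$. Otherwise, the vanishing of this expression gives a syzygy on $(F_1,\dots,F_N)$ which by the Hilbert--Burch resolution~\eqref{Eq:H-B} may be written as $\sum_{j=1}^{N-1} P_j\sigma_j$ with $P_j \in \R[x,y]$ of degrees differing by at most $2$.

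For each $\sigma_j$ I would form the lifted element $\Sigma_j := \sum_{i=1}^N z^{W-c_i}\sigma_{j,i}G_i \in J$. Its nominal $z^W$ part equals $z^W\sum_i\sigma_{j,i}F_i = 0$ by the syzygy relation, so $\deg_z\Sigma_j \leq W-1$. Setting $R := \sum_j P_j\Sigma_j$, one finds
\[
  F\ =\ R\ +\ \sum_i \bigl(H_i - z^{W-c_i}(H_i)_{W-c_i}\bigr)\,G_i,
\]
where the second sum has nominal top $z$-weight strictly less than $W$, so the induction hypothesis puts its top $z$-coefficient in $I$.

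The main obstacle, and where the hypothesis $\max_{s,t}|a_{2s}-a_{2t}|\leq 2$ does its real work, is showing that the leading $z$-coefficient of $R$ itself lies in $I$. Expanding $\Sigma_j = \sum_i z^{W-c_i}\sigma_{j,i}G_i'$ exhibits its top $z$-coefficient as a polynomial combination of the second-leading $\R[x,y]$-parts of the $G_i$, again governed by a syzygy that must be reduced via Hilbert--Burch. The degree-two bound limits the degrees of the $P_j$ to a spread of at most two and so confines the auxiliary syzygies arising at each stage to a range still controlled by the Hilbert--Burch matrix of $I$; this is exactly what keeps the secondary reductions inside $I$. The trickiest bookkeeping is tracking how these iterated reductions strictly decrease $W$ until the process terminates at an expression whose leading coefficient is manifestly in $I$.
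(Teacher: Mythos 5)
Your setup (expand in powers of $z$, observe that the nominal top coefficient $\sum_i (H_i)_{W-c_i}F_i$ either is a visible $\R[x,y]$-combination of the $F_i$ or is a syzygy on them) matches the paper's, but the way you handle the syzygy case has a genuine gap, and it sits exactly where you say the ``real work'' happens. After writing the syzygy as $\sum_j P_j\sigma_j$ and forming $R=\sum_j P_j\Sigma_j$, you still must show that the leading $z$-coefficient of $R$ lies in $I$; but the top $z$-coefficient of $\Sigma_j=\sum_i z^{W-c_i}\sigma_{j,i}G_i'$ is a combination of the \emph{second}-leading $x,y$-parts $g_{i,n_i-c_i+1}$ of the $G_i$, which are not generators of $I$ and are not governed by the Hilbert--Burch matrix of $I$ in any way. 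The claim that the spread-two bound on the $a_{2j}$ ``confines the auxiliary syzygies to a range still controlled by the Hilbert--Burch matrix'' is not an argument: there is no second syzygy to reduce against at that stage, and no reason offered why the iterated corrections land in $I$ or why the recursion terminates. (Also, your opening appeal to Lemma~\ref{L:initial_containment} is backwards --- $\ini_\omega J\subset I$ \emph{is} the conclusion of the present lemma, and Lemma~\ref{L:initial_containment} consumes it rather than supplying it.)

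The paper closes this gap with a single degree count instead of an iterated lifting. If the nominal top coefficient $f_{d-\mu}$ vanishes, then $(h_{i,\eta_i-\gamma_i})_i$ is a nonzero syzygy of degree $d-\mu$ in $\bigoplus_i S(-a_{1i})$, so $d-\mu\geq a_2^m:=\min_s a_{2s}$. Hence every lower coefficient $f_{d-k}$ with $k<\mu$ has degree $d-k\geq a_2^m+1$. On the other hand $\R[x,y]/I$ has finite length with regularity $a_2^M-2$ (where $a_2^M:=\max_s a_{2s}$), so $I$ contains \emph{every} form of $\R[x,y]$ of degree at least $a_2^M-1$. The hypothesis $a_2^M\leq a_2^m+2$ gives $a_2^M-1\leq a_2^m+1$, so all the $f_{d-k}$ lie in $I$ and in fact $F\in IS$, whence $\ini_\omega F\in I$. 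This is where $\max_{s,t}|a_{2s}-a_{2t}|\leq 2$ genuinely enters --- it matches the lower bound coming from the syzygy degree against the upper bound coming from the regularity of $I$ --- and no induction or syzygy lifting is needed. To repair your proof you would need to replace the vague ``secondary reductions'' step with this (or an equivalent) regularity argument.
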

%%%%%%%%%%%%%%%%%%%%%%%%%%%%%%%%%%%%%%%%%%%%%%%%%%%%%%%%%%%%%%%%%%%%%%%%%%%%%%%%%

%%%%%%%%%%%%%%%%%%%%%%%%%%%%%%%%%%%%%%%%%%%%%%%%%%%%%%%%%%%%%%%%%%%%%%%%%%%%%%%%%
\begin{example}
% Before giving a proof, we observe that t
 The condition on the second syzygies is necessary.
% Consider the ideal 
 Indeed, suppose that 
\[
   J\ :=\ \langle\, y^6+x^5z\,,\, 2x^2y^4+x^4yz\,,\, x^6 + y^5z \rangle\,,
\]
 so $I=\langle x^5, x^4y, y^5\rangle$ and $\sqrt{J}=\langle x,y\rangle$.
 The minimal free resolution of $S/I$ has the form
\[
    S(-6)\oplus S(-9) \ \longrightarrow\ 
    S(-5)^3\ \longrightarrow\ S\ \longrightarrow\ S/I\,,
\]
 so the condition on the second syzygies of Lemma~\ref{L:syzygies} does not hold. 
  Notice that
\[
     2x^3y^4{-}y^7\ =\ x(2x^2y^4{+}x^4yz){-}y(y^6{+}x^5z)
\]
is in the ideal $\ini_\omega J$ but not in the ideal $I$.  Using Macaulay2~\cite{M2}, we compute that the
multiplicity of $S/J$ is 20, while that of the scheme $S/I$ is 21. 
\end{example}
%%%%%%%%%%%%%%%%%%%%%%%%%%%%%%%%%%%%%%%%%%%%%%%%%%%%%%%%%%%%%%%%%%%%%%%%%%%%%%%%%

%%%%%%%%%%%%%%%%%%%%%%%%%%%%%%%%%%%%%%%%%%%%%%%%%%%%%%%%%%%%%%%%%%%%%%%%%%%%%%%%%
\begin{proof}[Proof of Lemma~\ref{L:syzygies}]
 Let $F\in J$ be a homogeneous form of degree $d$.
 We will show that $\ini_\omega F\in I$.
 Write $F$ in terms of the generators of $J$,
\[
   F\ =\ \sum_{i=1}^N H_i G_i\,,
\]
 where $H_1,\dotsc,H_N\in S$.

 Suppose that $\deg G_i=n_i$.
 Expanding $G_i$ as a polynomial in $z$ gives
\[
    G_i\ =\ \sum_{k=0}^{c_i} z^k g_{i,n_i-k}\,,
\]
 where $g_{i,n_i-k}\in\R[x,y]$ has degree $n_i{-}k$ and $c_i$ is the highest power of $z$ occurring in $G_i$.
 Note that $F_i=g_{i,n_i-c_i}$ and that $a_{1i}=n_i-c_i$ in the Hilbert-Burch resolution~\eqref{Eq:H-B}.

 Let $\gamma_i$ be the highest power of $z$ that occurs in $H_i$ and note that the degree \defcolor{$\eta_i$}
 of $H_i$ is $d{-}n_i$.
 Expand $H_i$ as a polynomial in $z$,
\[
   H_i\ =\ \sum_{k=0}^{\gamma_i} z^k h_{i,\eta_i-k}\,,
\]
 where $h_{i,\eta_i-k}\in\R[x,y]$ has degree $\eta_i-k$.

 If we expand $F$ as a polynomial in $z$, we have
\[
    F\ :=\ \sum_{k=0}^\mu  z^k f_{d-k}\,,
\]
 where  $f_{d-k}\in\R[x,y]$ has degree $d-k$ and $\mu$ is the maximum of $c_i+\gamma_i$.
 Then 
\[
  \ini_\omega F\ =\ z^m f_{d-m}\,,
   \qquad\mbox{ where } m\ :=\ \max\{k\mid f_{d-k}\neq 0\}\ \leq\ \mu\,.
\]

 Suppose that the forms are numbered so that for $i=1,\dotsc,p$ we have 
 $\mu=c_i+\gamma_i$, but if $i>p$, then $\mu>c_i+\gamma_i$.
 Then the coefficient $f_{d-\mu}$ of $z^\mu$ in $F$ is
 \begin{equation}\label{Eq:F-syzygy}
   f_{d-\mu}\ =\ \sum_{i=1}^p h_{i,\eta_i-\gamma_i} g_{i,n_i-c_i}\,.
 \end{equation}
 Since $g_{i,n_i-c_i}=F_i$ lies in $I$, if we have $f_{d-\mu}\neq 0$, then 
 $\ini_\omega F\in I$ as desired.

 Suppose on the contrary that $f_{d-\mu}= 0$.
 Since $I$ is minimally generated by $F_1,\dotsc,F_N$, the sum in~\eqref{Eq:F-syzygy} is a syzygy of the ideal
 $I$. 
 Then the degree $d{-}\mu$ of the sum~\eqref{Eq:F-syzygy} is at least one of the degrees $a_{2s}$ in the
 Hilbert-Burch resolution~\eqref{Eq:H-B}.
 Let $a^m_2$ be the minimum of the $a_{2s}$ and $a^M_2$ be the maximum.
 Then $a^m_2\leq d{-}\mu$, and so every term $f_{d-k}$ for $k<\mu$ in the expansion of $F$ with respect to
 $z$ has degree at least $a^m_2+1$ (recall that $f_{d-\mu}=0$).

 However, the regularity of $S/I$ is $a^M_2-2$, so that $I$ contains every monomial in $x,y$ of degree at least
 $a^M_2-1$. 
 Since $a^M_2\leq a^m_2+2$, we have $a^M_2-1\geq a^m_2+1$, so that every term  $f_{d-k}$ with
 $k<\mu$ in the expansion of $F$ lies in $I$, which implies that $F\in I$ and in particular $\ini_\omega F\in
 I$, which completes the proof.
\end{proof}
%%%%%%%%%%%%%%%%%%%%%%%%%%%%%%%%%%%%%%%%%%%%%%%%%%%%%%%%%%%%%%%%%%%%%%%%%%%%%%%%%

Let $G_1,\dotsc,G_N\in S$ be forms of the same degree $n$ with $[0:0:1]$ their only common zero
such that their linear terms $L_1,\dotsc,L_N$ at  $[0:0:1]$ are distinct and nonzero.

%%%%%%%%%%%%%%%%%%%%%%%%%%%%%%%%%%%%%%%%%%%%%%%%%%%%%%%%%%%%%%%%%%%%%%%%%%%%%%%%%
\begin{proof}[Proof of Theorem~\ref{Th:generalPowers}]
 For any $r\geq 0$, let $J$ be the ideal generated by $G_1^{r+1},\dotsc,G_N^{r+1}$ and let $I$ be the ideal
 generated by the powers $L_1^{r+1},\dotsc,L_N^{r+1}$ of linear forms.
 These powers of linear forms are distinct and they are linearly independent if and only if $N\leq r+2$. 

 Suppose first that $N>r+2$.  
 Then $I$ is generated by $t=r+2$ of these powers.
 In this case, $2t=2r+4>r+3$ and the theorem follows by Corollary~\ref{C:low_power}.

 If instead $N\leq r+2$, then $I$ is minimally generated by these powers.
 By Proposition~\ref{P:ShSt} the second syzygies of $I$ differ by at most one, so the hypotheses of
 Lemma~\ref{L:syzygies} hold and  $\ini_\omega J\subset I$.
 But then Lemma~\ref{L:initial_containment} implies the statement of the theorem.
\end{proof}
%%%%%%%%%%%%%%%%%%%%%%%%%%%%%%%%%%%%%%%%%%%%%%%%%%%%%%%%%%%%%%%%%%%%%%%%%%%%%%%%%

\begin{remark}\label{R:Mixed}
 Extensions of Theorem~\ref{Th:generalPowers} and Corollary~\ref{Cor:generalPowers} to
 mixed smoothness (where varying orders of continuity are imposed across interior edges) require
 a minimal free resolution for an ideal generated by arbitrary powers of linear forms in two variables.
 This is provided by Geramita and Schenck in~\cite{GeSch98}.  
 Note that the condition on second syzygies needed in Lemma~\ref{L:syzygies} is satisfied for
 ideals generated by arbitrary powers of linear forms.  
 We leave the details as an exercise for the interested reader.
\end{remark}

\begin{remark}
 When the hypotheses of Lemma~\ref{L:syzygies} hold, we may determine the multiplicity of $S/J(\upsilon)$.
 We cannot relax the condition on distinct tangents (see Example~\ref{ex:CLOex}), however the forms
 may have controlled singularities at $\upsilon$.  
 For instance, if each form has at worst a cusp singularity at $\upsilon$, then the ideal $I$
 defining the tangent cone is generated by (possibly different) powers of linear forms, similar to 
 Remark~\ref{R:Mixed}.  
 As long as the underlying linear forms are distinct, Lemma~\ref{L:syzygies} computes the multiplicity of
 $S/J(\upsilon)$.  
\end{remark}

%%%%%%%%%%%%%%%%%%%%%%%%%%%%%%%%%%%%%%%%%%%%%%%%%%%%%%%%%%%%%%%%%%%%%%%%%%%%%%%%%
\section{Hilbert Function and Regularity}\label{S:Regularity}

Suppose that the cell complex $\Delta$ has a single interior vertex, $\upsilon$, and that the forms defining
its edges are smooth at $\upsilon$ with distinct tangents as in Section~\ref{S:singleVertex}.
The formula of Corollary~\ref{Cor:generalPowers} for the Hilbert polynomial of the spline module
$C^r(\Delta)$ only gives the dimension of $C^r_d(\Delta)$ when $d$ exceeds the postulation
number of the spline module.
By Corollary~\ref{C:DimFormula}~\eqref{Eq:DimensionFormula}, the dimension of $C^r_d(\Delta)$
differs from an explicit polynomial by the dimension of $(S/J(\upsilon))_d$, which is the Hilbert function of
$S/J(\upsilon)$.  
%Whereas in Sections~\ref{S:pencil} and~\ref{S:singleVertex} we studied the long term behavior of $\dim
%(S/J(\upsilon))_d$, here w
We study the entire Hilbert function in some cases and give bounds on the postulation
number of $S/J(\upsilon)$ using (Castelnuovo-Mumford) regularity.

%%%%%%%%%%%%%%%%%%%%%%%%%%%%%%%%%%%%%%%%%%%%%%%%%%%%%%%%%%%%%%%%%%%%%%%%%%%%%%%%%
\subsection{Hilbert Function}
Stiller used Max Noether's `$AF+BG$ Theorem' to compute the Hilbert function of $S/J(\upsilon)$ in a special
case~\cite[Thm.~4.9]{Stiller83} (see Example~\ref{ex:multipoint}).
As explained by Eisenbud, Green, and Harris,~\cite{CayleyBacharach}, one generalization of Max Noether's theorem
leads to linkage.  
We use linkage to study the Hilbert function of $S/J(\upsilon)$ in some cases.  
We will only consider the case when $\Delta$ has three edges defined by pairwise
coprime forms $G_1,G_2,G_3$ of degrees $n_1,n_2,n_3$ as in Figure~\ref{F:cell_complex}.  
Set $J(\upsilon)=\langle G_1^{r+1},G_2^{r+1}, G_3^{r+1}\rangle$, $K:=\langle G_1^{r+1},G_2^{r+1}\rangle$,
$K':=(K:G_3^{r+1})$, and assume $G_3^{r+1}\notin K$ so that $K'\neq S$. 

%%%%%%%%%%%%%%%%%%%%%%%%%%%%%%%%%%%%%%%%%%%%%%%%%%%%%%%%%%%%%%%%%%%%%%%%%%%%%%%%%
\begin{lemma}
   The ideal $K'$ is Cohen-Macaulay of codimension two.
\end{lemma}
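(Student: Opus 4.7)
The plan is to show that $S/K'$ embeds as a submodule of the Cohen-Macaulay module $S/K$, which will give immediate control of its associated primes and hence of its depth and codimension.

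First, I would observe that because $G_1$ and $G_2$ are coprime forms in $S=\R[x,y,z]$, so are the powers $G_1^{r+1}$ and $G_2^{r+1}$; hence they form a regular sequence and $K$ is a complete intersection of codimension two. Consequently $S/K$ is Cohen-Macaulay of codimension two, and in particular is unmixed: every associated prime of $S/K$ has codimension two.

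Next, by the definition of $K'=(K\colon G_3^{r+1})$, multiplication by $G_3^{r+1}$ on $S/K$ has kernel $K'/K$ and image isomorphic to $(S/K')(-d_3)$, where $d_3:=(r+1)n_3$. This yields the short exact sequence
\[
    0\longrightarrow (S/K')(-d_3)\xrightarrow{\ \cdot G_3^{r+1}\ } S/K\longrightarrow S/J(\upsilon)\longrightarrow 0.
\]
In particular $(S/K')(-d_3)$ sits inside $S/K$ as a submodule, so every associated prime of $S/K'$ is an associated prime of $S/K$ and therefore has codimension two. Since $K'\neq S$ by hypothesis, $S/K'$ is nonzero and thus has at least one associated prime; combined, these facts give $\codim(K')=2$ and show that $K'$ is unmixed.

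Finally, unmixedness of codimension two in the three-dimensional ring $S$ forces Cohen-Macaulayness: the absence of embedded components gives $\depth(S/K')\geq 1$, while $\dim(S/K')=3-2=1$ then forces $\depth(S/K')=\dim(S/K')=1$, so $S/K'$ is Cohen-Macaulay of codimension two. There is no serious obstacle; the substance of the argument is recognizing the short exact sequence above (which will also be useful for the Hilbert function calculations that presumably follow), after which both claims follow from standard properties of the complete intersection $S/K$.
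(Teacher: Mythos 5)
Your proof is correct and is essentially the paper's argument: the paper also derives $\codim(K')=2$ by showing every associated prime of $S/K'$ is an associated prime of the complete intersection $S/K$ (your injection $\cdot G_3^{r+1}\colon S/K'\hookrightarrow S/K$ is exactly the element-wise computation $P=\ann_{S/K'}(F)=\ann_{S/K}(FG_3^{r+1})$ made module-theoretic). The only cosmetic difference is in the depth step, where the paper exhibits an explicit nonzerodivisor on $S/K'$ pulled back from $S/K$, whereas you invoke the equivalent fact that $\frakm\notin\mathrm{Ass}(S/K')$ forces $\depth(S/K')\geq 1$; both are standard and valid.
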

%%%%%%%%%%%%%%%%%%%%%%%%%%%%%%%%%%%%%%%%%%%%%%%%%%%%%%%%%%%%%%%%%%%%%%%%%%%%%%%%%

%%%%%%%%%%%%%%%%%%%%%%%%%%%%%%%%%%%%%%%%%%%%%%%%%%%%%%%%%%%%%%%%%%%%%%%%%%%%%%%%%
\begin{proof}
 Suppose that $P$ is an associated prime of $K'$ so that $P=\ann_{S/K'}(F)$ for some $F\notin K'$.  
 Then $P$ is an associated prime of $K$ since $P=\ann_{S/K}(FG_3^{r+1})$ and $FG_3^{r+1}\notin K$ since $F\notin
 K'=(K:G_3^{r+1})$.  
 Since $K$ is a complete intersection generated by two polynomials, it is Cohen-Macaulay of codimension two.  
 Hence all of its associated primes, including $P$,  have codimension two.  
 Then $\codim(K')=\codim(K)=2$.  
 Now it suffices to show that $\depth(S/K')=1$; in other words there exists an element of $S/K'$ that is not a zero divisor.
 Since $K$ is codimension two Cohen-Macaulay, $\depth(S/K)=1$ and there is some $H\in S$ which is not a zero divisor
 in $S/K$.  
 We do not have $H\in K'$ as $HG^{r+1}_3\notin K$.  
 Hence $H\notin K$.  
 Furthermore $H$ cannot be a zero divisor on $S/K'$---otherwise there exists $F\in S$ with $FH\in K'$, hence
 $H(FG^{r+1}_3)\in K$, contrary to the assumption that $H$ is not a zero divisor on $S/K$. 
 Hence $\depth(S/K')\ge 1$.  
 Since $\depth(S/K')\le\mbox{dim}(S/K')=1$, $\depth(S/K')= 1$, and so $K'$ is Cohen-Macaulay of codimension two, as
 claimed. 
\end{proof}

Set $K'':=(K:K')$.  
Since $K'$ is codimension two and Cohen-Macaulay, so is $K''$~\cite[Thm.~21.23]{Eisenbud}, and $(K:K'')=K'$.  
The ideals $K',K''$ are said to be \demph{linked}.  
There is a particularly nice relationship between the Hilbert functions of $K'$ and $K''$.

%%%%%%%%%%%%%%%%%%%%%%%%%%%%%%%%%%%%%%%%%%%%%%%%%%%%%%%%%%%%%%%%%%%%%%%%%%%%%%%%%
\begin{proposition}\cite[Thm.~CB7]{CayleyBacharach}\label{P:CB7}
  Let $K,K',K''$ be as above and set $s=n_1(r+1)+n_2(r+1)-3$.  Then
\[
\dim\left(K'/K\right)_d\ =\ \mult\left(S/K''\right)\ -\ \dim\left(S/K''\right)_{s-d}\,,
\]
 where $\dim(S/K'')_{s-d}$ is zero for $d>s$.
\end{proposition}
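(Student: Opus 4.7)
The plan is to prove Proposition~\ref{P:CB7} via linkage duality: since $K$ is a complete intersection, $S/K$ is Gorenstein, and I will identify $K'/K$ with a graded twist of the canonical module $\omega_{S/K''}$, reducing the claim to a standard Hilbert-function identity for the canonical module of a one-dimensional graded Cohen-Macaulay ring.

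First, I would observe that $K = \langle G_1^{r+1}, G_2^{r+1}\rangle$ is a complete intersection of forms of degrees $d_1:=n_1(r+1)$ and $d_2:=n_2(r+1)$, so $S/K$ is one-dimensional Gorenstein with canonical module $\omega_{S/K} \cong (S/K)(s)$ for $s = d_1+d_2-3$; the grading shift is read off the Koszul resolution dualized into $\omega_S = S(-3)$.

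Next, I would identify $K'/K$ as a Hom-module. Because $K' = (K : K'')$ and $K\subset K''$, the quotient $K'/K$ is the annihilator of $K''/K$ inside $S/K$, which is canonically
\[
K'/K \ \cong\ \mbox{Hom}_{S/K}(S/K'',\, S/K).
\]
Applying maximal-Cohen-Macaulay duality over the Gorenstein ring $S/K$, namely $\mbox{Hom}_{S/K}(M, \omega_{S/K}) \cong \omega_M$ for a maximal CM module $M$ (here $M = S/K''$), together with $\omega_{S/K} = (S/K)(s)$, yields
\[
K'/K \ \cong\ \omega_{S/K''}(-s)
\]
as graded $S$-modules.

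Finally, I would invoke the Hilbert-function identity
\[
\HF(\omega_R, d)\ =\ \mult(R)\ -\ \HF(R, -d)
\]
valid for any one-dimensional graded Cohen-Macaulay ring $R = S/K''$; this is a consequence of graded local duality, equivalently of Serre duality on the $0$-dimensional projective scheme associated to $S/K''$. Combining with the preceding step,
\[
\HF(K'/K, d)\ =\ \HF(\omega_{S/K''}, d-s)\ =\ \mult(S/K'') - \HF(S/K'', s-d),
\]
as claimed. The main obstacle is scrupulous bookkeeping of grading shifts through the duality isomorphisms, and checking that the Hilbert-function identity for $\omega_R$ holds at every integer $d$ rather than merely asymptotically; both points are standard in graded duality but demand care to land the formula on the nose for all $d$.
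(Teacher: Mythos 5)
The paper does not prove this proposition---it is quoted directly from Eisenbud--Green--Harris as Theorem CB7---so there is no in-paper argument to compare against. Your proof is correct and is essentially the standard derivation of that theorem: the identifications $K'/K\cong \mathrm{Hom}_{S/K}(S/K'',S/K)$, $\omega_{S/K}\cong (S/K)(s)$ from the Koszul resolution, and $\mathrm{Hom}_{S/K}(S/K'',\omega_{S/K})\cong\omega_{S/K''}$ (valid because $S/K''$ is maximal Cohen--Macaulay over the one-dimensional Gorenstein ring $S/K$, as the paper's preceding lemma and the linkage theorem guarantee) all check out, and the final Hilbert-function identity for $\omega_R$ holds at every degree by the local cohomology sequence $0\to R\to\bigoplus_d H^0(X,\mathcal{O}_X(d))\to H^1_\frakm(R)\to 0$ together with graded local duality.
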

%%%%%%%%%%%%%%%%%%%%%%%%%%%%%%%%%%%%%%%%%%%%%%%%%%%%%%%%%%%%%%%%%%%%%%%%%%%%%%%%%

We show how this proposition may be used when $r=0$.

%%%%%%%%%%%%%%%%%%%%%%%%%%%%%%%%%%%%%%%%%%%%%%%%%%%%%%%%%%%%%%%%%%%%%%%%%%%%%%%%%
\begin{proposition}\label{P:RadHilbert}
 Suppose that $K=\langle G_1,G_2 \rangle$  defines $n_1n_2$ distinct points $\Gamma\subset\mathbb{P}^2(\C)$ and
 that on $\Gamma$, $G_3$ vanishes at only the point $\upsilon$.  
 Then 
\[
   \dim \left(S/J(\upsilon)\right)_d\ =\ \left\lbrace
   \begin{array}{lcr}
     \dim \left(S/K\right)_d-\dim \left(S/K\right)_{d-n_3} && d\le n_1+n_2+n_3-3\\
     1 && d\ge n_1+n_2+n_3-2
   \end{array}
   \right.\ .
\]
\end{proposition}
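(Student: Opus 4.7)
The plan is to combine a colon-ideal short exact sequence with the linkage formula of Proposition~\ref{P:CB7}. I would begin with the multiplication-by-$G_3$ sequence
\[
0 \;\longrightarrow\; (S/K')(-n_3) \;\xrightarrow{\,\cdot G_3\,}\; S/K \;\longrightarrow\; S/J(\upsilon) \;\longrightarrow\; 0,
\]
whose injectivity follows from the very definition $K' = (K:G_3)$ and whose cokernel is $S/(K+\langle G_3\rangle) = S/J(\upsilon)$. Combined with the obvious $0 \to K'/K \to S/K \to S/K' \to 0$, taking Hilbert functions reduces the problem to computing $\dim(K'/K)_{d-n_3}$ together with the standard Hilbert function of the complete intersection $S/K$, yielding
\[
\dim(S/J(\upsilon))_d \;=\; \dim(S/K)_d \;-\; \dim(S/K)_{d-n_3} \;+\; \dim(K'/K)_{d-n_3}.
\]

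Next I would invoke Proposition~\ref{P:CB7} with $s = n_1+n_2-3$. Its right-hand side depends on the multiplicity and Hilbert function of $S/K''$, which I would pin down geometrically: because $K$ is the saturated ideal of the reduced set $\Gamma$ of $n_1 n_2$ points and $G_3$ vanishes on $\Gamma$ only at $\upsilon$, the colon $K' = (K : G_3)$ cuts out the complementary reduced set $\Gamma \setminus \{\upsilon\}$ of $n_1 n_2 - 1$ points. Classical linkage then residues out the reduced point $\upsilon = [0{:}0{:}1]$, so $K'' = \langle x, y\rangle$; in particular $\mult(S/K'') = 1$ and $\dim(S/K'')_e = 1$ for every $e \ge 0$.

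Plugging these into Proposition~\ref{P:CB7} yields
\[
\dim(K'/K)_{d-n_3} \;=\; 1 - \dim(S/K'')_{n_1+n_2+n_3-3-d} \;=\; \begin{cases} 0 & \text{if } d \le n_1+n_2+n_3-3,\\ 1 & \text{if } d \ge n_1+n_2+n_3-2.\end{cases}
\]
In the first case this immediately gives $\dim(S/J(\upsilon))_d = \dim(S/K)_d - \dim(S/K)_{d-n_3}$. In the second case, the inequality $d - n_3 \ge n_1+n_2-2$ places $d - n_3$ past the postulation number of the complete intersection $K$, so $\dim(S/K)_d = \dim(S/K)_{d-n_3} = n_1 n_2$, and the expression collapses to $1$.

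The main obstacle is the linkage identification $K'' = \langle x, y\rangle$. This requires that $K'$ and $K''$ be saturated---which uses the preceding lemma showing $K'$ is codimension-two Cohen--Macaulay, hence unmixed---and that taking the colon with the ideal of the $n_1 n_2 - 1$ complementary points of a reduced complete intersection really does recover the ideal of the residual reduced point. Once this geometric step is secured, everything else is bookkeeping with the Hilbert function of a complete intersection.
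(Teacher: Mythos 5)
Your argument is correct and is essentially the same as the paper's: the same multiplication-by-$G_3$ sequence combined with the tautological sequence $0\to K'/K\to S/K\to S/K'\to 0$, the same identification of $K''$ as the ideal $\langle x,y\rangle$ of the residual reduced point, and the same application of Proposition~\ref{P:CB7} to evaluate $\dim(K'/K)_{d-n_3}$. The only difference is cosmetic: you flag explicitly that the linkage identification of $K''$ rests on $K'$ being unmixed (via the preceding Cohen--Macaulayness lemma), a point the paper leaves implicit.
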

%%%%%%%%%%%%%%%%%%%%%%%%%%%%%%%%%%%%%%%%%%%%%%%%%%%%%%%%%%%%%%%%%%%%%%%%%%%%%%%%%

%%%%%%%%%%%%%%%%%%%%%%%%%%%%%%%%%%%%%%%%%%%%%%%%%%%%%%%%%%%%%%%%%%%%%%%%%%%%%%%%%
\begin{proof}
  As the points of $\Gamma$ are distinct, $K$ is is the (radical) ideal of all polynomials vanishing on
  $\Gamma$.  
  Since $G_3$ only vanishes at $\upsilon$, the ideal $K':=(K:G_3)$ is the ideal of
  $\Gamma\smallsetminus\{\upsilon\}$.
  Thus $K''=(K:K')$ is the ideal of $\upsilon$, so that $K''=\langle x,y \rangle$.  
  By Proposition~\ref{P:CB7},
 \[
    \dim(K'/K)_d\ =\ 1-\dim(S/K'')_{n_1+n_2-3-d}\ =\ 
     \left\lbrace
       \begin{array}{lcr}
        0 && d \le n_1+n_2-3\\
        1 && d > n_1+n_2-3
      \end{array}
     \right.\ .
 \]
 The ideal $K'$ is related to $J(\upsilon)=\langle G_1,G_2,G_3 \rangle$ via the multiplication sequence
 \begin{equation}\label{Eq:MultSequence}
   0\ \rightarrow\ S(-n_3)/K' \ \xrightarrow{\ \cdot G_3\ }\ 
    S/K\ \rightarrow\  S/J(\upsilon)\  \rightarrow\  0\,.
 \end{equation}
 Using~\eqref{Eq:MultSequence}, the tautological short exact sequence
\[
    0\ \rightarrow\  K'/K\ \rightarrow\ S/K\ \rightarrow\ S/K'\ \rightarrow\  0\,,
\]
  and taking Euler-Poincar\'e characteristic yields
\[
    \dim \left(S/J(\upsilon)\right)_d\ =\ \dim\left(S/K\right)_d\ -\ \dim\left(S/K\right)_{d-n_3}
                      \ +\ \dim\left(K'/K\right)_{d-n_3}\,.
\]
 Observing that $\dim(S/K)_d=n_1n_2$ for $d\ge n_1+n_2-2$ yields the result.
\end{proof}
%%%%%%%%%%%%%%%%%%%%%%%%%%%%%%%%%%%%%%%%%%%%%%%%%%%%%%%%%%%%%%%%%%%%%%%%%%%%%%%%%

%%%%%%%%%%%%%%%%%%%%%%%%%%%%%%%%%%%%%%%%%%%%%%%%%%%%%%%%%%%%%%%%%%%%%%%%%%%%%%%%%
\begin{remark}
The hypotheses of Proposition~\ref{P:RadHilbert} can be weakened to assume that $G_1,G_2$ define a complete intersection scheme $\Gamma\subset\mathbb{P}^2(\C)$ in which $\upsilon$ is a reduced point and $G_3$ vanishes only at $\upsilon\in\Gamma$.  This alteration does not change the conclusion.
\end{remark}

%%%%%%%%%%%%%%%%%%%%%%%%%%%%%%%%%%%%%%%%%%%%%%%%%%%%%%%%%%%%%%%%%%%%%%%%%%%%%%%%%
\begin{example}\label{E:Bacharach}
Suppose $n_1=n_2=n_3=3$.  
Then $K$ is a complete intersection generated by two polynomials of degree three.  
Hence we have
\[
   \begin{array}{|c|ccccr|}\hline
    d           & 0 & 1 & 2 & 3 & \geq 4\\\hline
    \dim(S/K)_d & 1 & 3 & 6 & 8 & 9\\\hline
   \end{array}
\]
By Proposition~\ref{P:RadHilbert},
\[
   \begin{array}{|c|ccccccr|}\hline
    d           & 0 & 1 & 2 & 3 & 4 & 5 & \geq 6\\\hline
    \dim(S/J(\upsilon))_d & 1 & 3 & 6 & 7 & 6 & 3 & 1\\\hline
   \end{array}
\]
\end{example}
%%%%%%%%%%%%%%%%%%%%%%%%%%%%%%%%%%%%%%%%%%%%%%%%%%%%%%%%%%%%%%%%%%%%%%%%%%%%%%%%%

%%%%%%%%%%%%%%%%%%%%%%%%%%%%%%%%%%%%%%%%%%%%%%%%%%%%%%%%%%%%%%%%%%%%%%%%%%%%%%%%%
\begin{remark}
 The computation $\dim(S/J(\upsilon))_6=1$ has a hidden application of the classical
 Cayley-Bacharach theorem; namely that $\dim(K'/K)_3=0$.  
 This statement says that any cubic vanishing on eight of the nine points defined by $K$ must also vanish on the
 ninth point.  
 Proposition~\ref{P:CB7} generalizes this classical result.
\end{remark}

%%%%%%%%%%%%%%%%%%%%%%%%%%%%%%%%%%%%%%%%%%%%%%%%%%%%%%%%%%%%%%%%%%%%%%%%%%%%%%%%%
\subsection{Regularity}
As determining the Hilbert function of $S/J(\upsilon)$ is quite difficult, we turn now to bounding its postulation
number.  This is controlled by the regularity of $S/J(\upsilon)$.

%%%%%%%%%%%%%%%%%%%%%%%%%%%%%%%%%%%%%%%%%%%%%%%%%%%%%%%%%%%%%%%%%%%%%%%%%%%%%%%%%
\begin{proposition}[\cite{EisenbudSyz}, Thm.~4.2]\label{P:RegularityAndPostulation}
 The Hilbert function $\dim (S/J(\upsilon))_d$ agrees with the Hilbert polynomial for 
 $d\geq \reg(S/J(\upsilon))+1$.  
 Thus, the postulation number of $S/J(\upsilon)$ is at most the regularity of $S/J(\upsilon)$.
\end{proposition}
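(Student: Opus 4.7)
The plan is to derive this as a formal consequence of the Grothendieck--Serre formula together with the cohomological characterization of Castelnuovo--Mumford regularity. Set $M:=S/J(\upsilon)$ and write $H^i_{\frakm}(M)$ for the local cohomology of $M$ supported at the irrelevant ideal $\frakm=\langle x,y,z\rangle$.

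First I would recall the Grothendieck--Serre identity, valid for every finitely generated graded $S$-module and every degree $d$:
\[
   \HF(M,d)\ -\ \HP(M,d)\ =\ \sum_{i\geq 0}(-1)^i\dim_\R H^i_{\frakm}(M)_d.
\]
The sum is finite in each degree since each $H^i_{\frakm}(M)$ is Artinian, vanishing in all sufficiently high degrees. Next I would invoke the cohomological definition of regularity,
\[
   \reg(M)\ =\ \max\{\,i+j\mid H^i_{\frakm}(M)_j\neq 0\,\},
\]
which (by local duality) agrees with the definition via the shifts in the minimal free resolution used in Section~\ref{S:pencil}. This characterization is exactly equivalent to the vanishing statement $H^i_{\frakm}(M)_j=0$ whenever $j>\reg(M)-i$.

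Combining the two, suppose $d\geq \reg(M)+1$. Then for every $i\geq 0$ we have $d>\reg(M)\geq\reg(M)-i$, so each term on the right of the Grothendieck--Serre identity vanishes and $\HF(M,d)=\HP(M,d)$. The bound on the postulation number is then immediate from its definition as the largest $d$ at which $\HF$ and $\HP$ disagree. Since both ingredients are standard (and the statement is attributed to \cite[Thm.~4.2]{EisenbudSyz}), there is no genuine obstacle; the content of the proposition lies in packaging these facts so that regularity, which is accessible through the Hilbert--Burch-type resolutions used throughout the paper, gives a uniform bound on when the explicit polynomial of Corollary~\ref{Cor:generalPowers} begins to compute $\dim C^r_d(\Delta)$.
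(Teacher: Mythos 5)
Your argument is correct. Note that the paper offers no proof of this proposition at all: it is stated as a direct citation of \cite[Thm.~4.2]{EisenbudSyz}, so there is nothing internal to compare against. Your route --- the Grothendieck--Serre formula $\HF(M,d)-\HP(M,d)=\sum_{i\ge 0}(-1)^i\dim_{\R}H^i_{\frakm}(M)_d$ combined with the vanishing $H^i_{\frakm}(M)_j=0$ for $i+j>\reg(M)$ --- is the standard proof and is essentially what the cited reference does. The one ingredient you invoke without proof beyond the two displayed facts is the equivalence of the local-cohomology characterization of regularity with the resolution-theoretic definition used in Section~\ref{S:pencil}; that equivalence is itself nontrivial, but the paper also states and cites it (Proposition~\ref{P:LCohoReg}), so leaning on it is consistent with the paper's own level of rigor.
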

%%%%%%%%%%%%%%%%%%%%%%%%%%%%%%%%%%%%%%%%%%%%%%%%%%%%%%%%%%%%%%%%%%%%%%%%%%%%%%%%%

The regularity of quotients  $S/I$ for some ideal $I$ has been studied intensively.  
One of the tightest general bounds applicable to our situation is due to
Chardin and Fall~\cite{Chardin05}. 

\begin{proposition}\cite[Cor.~0.2]{Chardin05}\label{P:ChardinReg}
Let $S$ be a polynomial ring in three variables and $I$ an ideal generated in degree at most $n$ satisfying
$\dim(S/I)\le 1$.  
Then $\reg(S/I)\le 3(n-1)$.
\end{proposition}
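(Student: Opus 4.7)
The plan is to prove the bound via the local-cohomology characterization of Castelnuovo--Mumford regularity,
\[
\reg(S/I)\ =\ \max_{i\ge 0}\bigl\{\,\mathrm{end}\bigl(H^i_\frakm(S/I)\bigr)+i\,\bigr\}\,,
\]
and to reduce the three-variable situation to lower-dimensional ones via general hyperplane sections, absorbing at most $n-1$ into the regularity bound at each step.  Since $\dim(S/I)\le 1$, only $H^0_\frakm(S/I)$ and $H^1_\frakm(S/I)$ can be nonzero, so only two quantities must be controlled, and the factor of $3$ in the bound is meant to emerge as $\dim S=3$ contributions of size $n-1$ each.

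For the $\frakm$-primary base case ($\dim(S/I)=0$), a graded prime-avoidance argument produces three elements $\theta_1,\theta_2,\theta_3\in I$ of degree $\le n$ forming a regular sequence in $S$.  The complete intersection $K:=(\theta_1,\theta_2,\theta_3)\subseteq I$ satisfies $\reg(S/K)=\deg\theta_1+\deg\theta_2+\deg\theta_3-3\le 3(n-1)$, and since the Artinian module $S/I$ is a quotient of $S/K$ its top nonzero graded piece lies no higher, so $\reg(S/I)\le 3(n-1)$.  For $\dim(S/I)=1$, pick a general linear form $\ell$; then
\[
   0\to \bigl((I\colon\ell)/I\bigr)(-1)\to (S/I)(-1)\xrightarrow{\ \ell\ } S/I \to S/(I+(\ell))\to 0
\]
breaks into short exact sequences whose long exact sequences in local cohomology couple $H^1_\frakm(S/I)$ to $H^0_\frakm(S/(I+(\ell)))$.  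The quotient $S/(I+(\ell))$ is Artinian and its defining ideal in $S/(\ell)\cong k[y,z]$ is generated in degree $\le n$, so a two-variable analogue of the base step bounds its regularity by $2(n-1)$.  This contributes $\mathrm{end}\,H^1_\frakm(S/I)+1\le 2(n-1)+1$, already within the target bound $3(n-1)$ for $n\ge 2$.

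The principal obstacle is controlling $\mathrm{end}\,H^0_\frakm(S/I) = \mathrm{end}\bigl(I^{\mathrm{sat}}/I\bigr)$, which measures how far $I$ falls short of its saturation; this is where the generator-degree hypothesis must really be used, since the saturation can require generators well beyond degree $n$.  Here I would invoke linkage: as $\codim(I)\ge 2$, $I$ contains a regular sequence $\theta_1,\theta_2$ of forms of degree $\le n$, whose complete intersection $K'$ has regularity at most $2(n-1)$, and a Proposition~\ref{P:CB7}-style duality relates the top-degree behavior of $I^{\mathrm{sat}}/K'$ to the Hilbert function of a linked residual ideal that one bounds using a third generator of $I$ of degree $\le n$.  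The resulting contributions sum to $\mathrm{end}\,H^0_\frakm(S/I)\le 3(n-1)$, and combining with the $H^1_\frakm$ estimate from the previous paragraph yields $\reg(S/I)\le 3(n-1)$.
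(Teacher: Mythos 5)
First, a point of comparison: the paper offers no proof of this statement --- it is quoted from Chardin and Fall~\cite[Cor.~0.2]{Chardin05} and used as a black box --- so there is no internal argument to measure yours against. Judged on its own terms, your outline uses the right family of techniques (local cohomology, generic hyperplane sections, liaison), and two of its three pieces are essentially sound: the Artinian case via a regular sequence $\theta_1,\theta_2,\theta_3\in I$ of forms of degree at most $n$ (graded prime avoidance over the infinite field $\R$ does supply this, and $\nd(S/I)\le\nd(S/K)=\reg(S/K)\le 3(n-1)$), and the bound on $\nd(H^1_\frakm(S/I))$ obtained by coupling it to $H^0_\frakm(S/(I+(\ell)))$ for a general linear form $\ell$; your stated bound $2(n-1)+1$ should be sharpened by one (the long exact sequence actually gives $\nd(H^1_\frakm(S/I))+1\le \nd(S/(I+(\ell)))\le 2(n-1)$), which matters only for the trivial case $n=1$.

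The genuine gap sits exactly where you flag ``the principal obstacle'': bounding $\nd(H^0_\frakm(S/I))=\nd\bigl((I:\frakm^\infty)/I\bigr)$. Your final paragraph asserts that the liaison contributions ``sum to $3(n-1)$'' without an argument, and the natural implementation does not close. Concretely: take a complete intersection $K=(\theta_1,\theta_2)\subseteq I$ of forms of degree at most $n$ and a third general form $\theta_3\in I$ of degree at most $n$; Proposition~\ref{P:CB7} gives the plateau $\dim\bigl((I:\frakm^\infty)/K\bigr)_d=\mult\bigl(S/(K:I)\bigr)$ for $d$ large, while the lower bound you can extract from $\theta_3$ is $\dim(I/K)_d\ge\dim\bigl(S/(K:\theta_3)\bigr)_{d-n}$, which plateaus at $\mult\bigl(S/(K:\theta_3)\bigr)$. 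These match only if a single general degree-$n$ element of $I$ generates $I_p/K_p$ at every point $p$ of $V(K)$, and that is false in general: for $I=(x,y)^3$ (generated in degree $n=3$, $\dim(S/I)=1$) and any complete intersection $K$ of two cubics in $I$, the module $I_p/K_p$ at $p=[0:0:1]$ requires two generators, so $K+(\theta_3)$ differs from $I$ in all large degrees and the multiplicities disagree. (That particular $I$ is saturated, so the theorem is not threatened, but it shows the mechanism you propose cannot deliver the needed inequality.) Handling ideals with arbitrarily many generators is the actual content of Chardin--Fall, whose proof goes through duality for Koszul homology rather than a single liaison step. As written, your argument establishes the bound only when $I$ is an almost complete intersection on three forms --- essentially recovering the paper's Proposition~\ref{P:RegBound} rather than the cited result.
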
 

\begin{corollary}\label{C:GeneralPost}
Suppose $\Delta$ has a single interior vertex $\upsilon$ and $N$ edges defined by forms $G_1,\ldots,G_N$ of degrees
$n_1\le\cdots\le n_N=n$, meeting smoothly at $\upsilon$ with distinct tangents.  
Set $t=\min\{N,r+2\}$ and $a=\lfloor\frac{r+1}{t-1}\rfloor$.  
Then
\[
\dim C^r_d(\Delta)\ =\ \sum\limits_{i=1}^N\tbinom{d-n_i(r+1)+2}{2}+\tbinom{r+a+2}{2}-t\tbinom{a+1}{2}
\]
for $d\ge 3n(r+1)-2$.
\end{corollary}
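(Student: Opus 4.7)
The plan is to combine the exact dimension formula of Corollary~\ref{C:DimFormula} with a Castelnuovo--Mumford regularity bound on $S/J(\upsilon)$ coming from Chardin and Fall.  First, by~\eqref{Eq:DimensionFormula}, since $\Delta$ has a unique interior vertex $\upsilon$,
\[
\dim C^r_d(\Delta) \;=\; \sum_{i=1}^N \binom{d-(r+1)n_i+2}{2} \;+\; \dim(S/J(\upsilon))_d.
\]
By Theorem~\ref{Th:generalPowers} together with Corollary~\ref{C:multiplicity}, the Hilbert polynomial of $S/J(\upsilon)$ is the constant $\binom{r+a+2}{2} - t\binom{a+1}{2}$.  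So the corollary reduces to identifying a threshold on $d$ above which $\dim(S/J(\upsilon))_d$ agrees with this constant.

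Next I would bound $\reg(S/J(\upsilon))$.  The generators $G_i^{r+1}$ of $J(\upsilon)$ have degrees $n_i(r+1) \leq n(r+1)$, and by hypothesis the $G_i$ vanish simultaneously only at $\upsilon$, so the projective scheme cut out by $J(\upsilon)$ is zero-dimensional and $\dim(S/J(\upsilon))=1$.  Proposition~\ref{P:ChardinReg} then yields
\[
\reg(S/J(\upsilon)) \;\leq\; 3\bigl(n(r+1) - 1\bigr) \;=\; 3n(r+1) - 3.
\]
Proposition~\ref{P:RegularityAndPostulation} accordingly implies that the Hilbert function $\dim(S/J(\upsilon))_d$ equals its Hilbert polynomial for all $d \geq 3n(r+1) - 2$.

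Finally, for such $d$ we also have $d \geq n(r+1) \geq (r+1)n_i$ for every $i$, so each summand $\binom{d-(r+1)n_i+2}{2}$ is a genuine dimension and coincides with its polynomial expression.  Substituting then gives the claimed formula.  This argument is essentially bookkeeping over results already in hand, so there is no real obstacle; the only step requiring any verification is the hypothesis $\dim(S/J(\upsilon)) \leq 1$ of Proposition~\ref{P:ChardinReg}, which is immediate from the assumption that $\upsilon$ is the unique common zero of the edge forms.  One could ask whether the bound $3n(r+1)-2$ is sharp, but improving it would require replacing Chardin--Fall with finer information about the syzygies of $J(\upsilon)$, which lies outside the scope of this corollary.
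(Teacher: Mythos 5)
Your proposal is correct and follows essentially the same route as the paper: the paper's proof is exactly the combination of Corollary~\ref{Cor:generalPowers} (which is itself Corollary~\ref{C:DimFormula} plus Theorem~\ref{Th:generalPowers}), the Chardin--Fall bound of Proposition~\ref{P:ChardinReg} applied to the generators $G_i^{r+1}$ of degree at most $n(r+1)$, and Proposition~\ref{P:RegularityAndPostulation}. Your additional remarks---checking $\dim(S/J(\upsilon))\le 1$ and that the binomial summands agree with their polynomial expressions in the stated range---are correct but routine, and your closing observation that the bound is not sharp matches the paper's own comparison in Table~\ref{T:PostulationCompare}.
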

\begin{proof}
This follows from Corollary~\ref{Cor:generalPowers}, Proposition~\ref{P:RegularityAndPostulation}, and
Proposition~\ref{P:ChardinReg}. 
\end{proof}

The bound in Corollary~\ref{C:GeneralPost} is not optimal (see
Table~\ref{T:PostulationCompare}).  
We derive a tighter bound when $\Delta$ has three edges defined by forms
$G_1,G_2,G_3$ of degrees $n_1,n_2,n_3$ meeting at a single interior vertex $\upsilon$, as in
Figure~\ref{F:cell_complex}. 
We take $\upsilon$ to be the point $[0:0:1]$ with ideal $\langle x,y\rangle$.
Then $J(\upsilon):=\langle G_1^{r+1},G_2^{r+1},G_3^{r+1}\rangle$.
As we have seen, even in this simple case determining the Hilbert function is difficult.  
(See also~\cite{Simis12}, where three-generated ideals in $\C[x,y,z]$ are studied in the context of plane Cremona
maps.)  

Our regularity bound is a translation of~\cite[Thm.~1.2]{Simis12}.
We use \demph{local cohomology}.
Let $\frakm=\langle x,y,z \rangle$.
The zeroth local cohomology of an $S$-module $M$ is
\[
   \defcolor{H^0_{\frakm}(M)}\ :=\ \{m\in M \mid \frakm^k m=0\mbox{ for some $k\geq 0$}\}.
\]
If $I$ is an ideal of $S$, then $H^0_\frakm(S/I)=(I:\frakm^\infty)/I$.

For $i>0$, the local cohomology functors $H^i_\frakm(\ )$ for $i>0$ are the 
\demph{right derived functors} of $H^0_\frakm(\ )$.
If $M\rightarrow\calI$ is an injective resolution of $M$ then $\defcolor{H^i_\frakm(M)}:=H^i(H^0_\frakm(\calI))$,
the $i^{th}$ cohomology of the complex $H^0_\frakm(\calI)$. 
For more on local cohomology, see~\cite[App.~1]{EisenbudSyz}.
If $M$ is graded and finitely generated, then $H^i_{\frakm}(M)$ is
graded and \demph{Artinian} in that $H^i_\frakm(M)_d=0$ for $d\gg 0$. 
Also $H^i_\frakm(M)\neq 0$ only for the range 
$\depth(M)\le i\le \dim(M)$~\cite[Prop.~A1.16]{EisenbudSyz}.  
These are important since $\reg(M)$ may be identified using local cohomology.\medskip

%%%%%%%%%%%%%%%%%%%%%%%%%%%%%%%%%%%%%%%%%%%%%%%%%%%%%%%%%%%%%%%%%%%%%%%%%%%%%%%%%
\begin{proposition}{~\cite[Thm~4.3]{EisenbudSyz}}\label{P:LCohoReg}
If $M$ is a finitely generated graded $S$-module, then $\reg(M)$ is the smallest integer $d$ satisfying:
 \begin{enumerate}
 	\item $H^0_\frakm(M)_d\neq 0$, and 
 	\item $H^i_{\frakm}(M)_{i+d-1}=0$ for all $i>0$.\rule{0pt}{13pt}
 \end{enumerate}
\end{proposition}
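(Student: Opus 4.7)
The plan is to prove this well-known characterization of Castelnuovo--Mumford regularity via Grothendieck's local duality, following the approach developed in~\cite{EisenbudSyz}. The working definition of regularity will be the standard one in terms of a minimal graded free resolution: if $F_\bullet \to M \to 0$ is minimal with $F_i = \bigoplus_j S(-a_{ij})$, then $\reg(M) = \max_{i,j}\{a_{ij}-i\}$. The goal is to match this definition up with the local cohomology conditions~(1) and~(2) of the proposition.

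The first step is to invoke graded local duality for the polynomial ring $S = \R[x,y,z]$ with canonical module $\omega_S = S(-3)$: for each $i$ there is a natural degree-reversing isomorphism
\[
   H^i_{\frakm}(M)_d \;\simeq\; \Ext^{3-i}_S(M, S(-3))_{-d}^{\vee},
\]
where $(\cdot)^\vee$ denotes graded $\R$-linear duality. This reduces vanishing and nonvanishing statements about graded pieces of local cohomology to statements about the top degrees of certain $\Ext$ modules, which are accessible from any free resolution.

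The second step is to compute these $\Ext$ modules by dualizing the minimal free resolution: applying $\mathrm{Hom}_S(-,S(-3))$ to $F_\bullet$ yields a cochain complex whose term in position $j$ is $\bigoplus_k S(a_{jk}-3)$. By minimality of $F_\bullet$, every differential has entries in the maximal ideal $\frakm$, so generators in the top degree of each term cannot lie in the image of a coboundary and thus survive to nonzero classes in cohomology. Consequently the extremal nonvanishing degree of $\Ext^{3-i}_S(M, S(-3))$ is controlled precisely by $\max_k a_{3-i,k} - 3$.

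Combining these two steps, the graded piece $H^i_{\frakm}(M)_d$ vanishes for $d$ outside a range determined by the shifts $a_{ij}$, and a short bookkeeping calculation shows that the smallest $d$ for which conditions~(1) and~(2) simultaneously hold coincides with $\max_{i,j}\{a_{ij}-i\} = \reg(M)$. The main obstacle lies in the second step: one must verify that the extremal shifts in the dualized complex descend to nonzero $\Ext$ classes rather than being cancelled by coboundaries. This hinges crucially on minimality of the resolution, and would fail for an arbitrary free resolution where cancellations could shift the extremal degrees.
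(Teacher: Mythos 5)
The paper does not prove this proposition---it is quoted as Theorem~4.3 of the cited reference [EisenbudSyz]---so there is no in-paper argument to compare yours against; I can only assess your sketch on its own terms. Your first step, graded local duality $H^i_\frakm(M)_d\simeq \Ext^{3-i}_S(M,S(-3))_{-d}^{\vee}$, is correct. The gap sits exactly where you place ``the main obstacle,'' and minimality of $F_\bullet$ does not close it. Minimality guarantees that the differentials of $\mathrm{Hom}_S(F_\bullet,S(-3))$ have entries in $\frakm$, hence that no generator of $\mathrm{Hom}_S(F_j,S(-3))$ is a coboundary; it says nothing about whether the extremal generators are \emph{cocycles}, and in general they are not. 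Already for $M=S/\langle x\rangle$, with minimal resolution $0\to S(-1)\xrightarrow{\,x\,} S\to M\to 0$, one has $\mathrm{Hom}_S(F_0,S(-3))=S(-3)\neq 0$ while $\Ext^0_S(M,S(-3))=0$: the degree-$3$ dual generator dies because the dual differential is injective. So the extremal nonvanishing degree of $\Ext^{3-i}_S(M,S(-3))$ is \emph{not} ``controlled precisely by $\max_k a_{3-i,k}-3$.'' What the subquotient structure actually gives is only the one-sided bound $\indeg\bigl(\Ext^{j}_S(M,S(-3))\bigr)\geq 3-\max_k a_{jk}$, whence $\nd\bigl(H^i_\frakm(M)\bigr)+i\leq \max_k a_{3-i,k}-(3-i)\leq \reg(M)$ for every $i$. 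That is the easy half of the theorem.

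The substantive half---that some $H^i_\frakm(M)$ really attains degree $\reg(M)-i$, so that regularity is detected by local cohomology---cannot be read off the dualized resolution term by term: the maxima over $i$ agree, but the individual quantities $\nd(H^i_\frakm(M))+i$ and $\max_k a_{3-i,k}-(3-i)$ need not (in the example above they disagree at $i=3$). The standard proof proceeds quite differently: one first treats modules of finite length, then argues by induction on $\dim M$ using the sequence $0\to H^0_\frakm(M)\to M\to \overline{M}\to 0$ and a general linear form that is a nonzerodivisor on $\overline{M}$, comparing the local cohomology and $\mathrm{Tor}$ long exact sequences for $M$ and $M/\ell M$. You would need to supply an argument of that kind (or an equally careful \v{C}ech--Koszul double-complex argument) to make your second step, and hence the whole proof, go through.
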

%%%%%%%%%%%%%%%%%%%%%%%%%%%%%%%%%%%%%%%%%%%%%%%%%%%%%%%%%%%%%%%%%%%%%%%%%%%%%%%%%

Let \defcolor{$(S/I)^*$} be the graded dual of $S/I$, in degree $-d$ it is the dual vector space to
$(S/I)_d$.  
Given a graded $S$-module $M$ of finite length, \defcolor{$\indeg(M)$} is the lowest degree of a
nonzero homogeneous component of $M$ and \defcolor{$\nd(M)$} is the highest degree.
%Oftentimes $\nd(M)$ is called the socle degree of $M$.
% This is not needed.

\begin{proposition}\label{P:RegBound}
 Let $G_1,G_2,G_3$ be forms of degrees $1\le n_1\le n_2\le n_3$, with $n_3\ge 2$ whose only common zero
 in $\P^2$ is $\upsilon$.  
 Then
\[
   \reg(S/J(\upsilon))\ \leq (n_1{+}n_2{+}n_3{-}1)(r{+}1)-3.
   \]
If $2t\geq r{+}3$ and $n_1>1$, with $t$ as in the statement of Lemma~\ref{L:saturation}, then equality holds.
\end{proposition}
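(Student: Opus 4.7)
The plan is to control $\reg(S/J(\upsilon))$ via its local cohomology, using the complete intersection $K:=\langle G_1^{r+1},G_2^{r+1}\rangle$ and the linked pair $K':=(K:G_3^{r+1})$ and $K'':=(K:K')$ introduced above. Applying $H^\bullet_\frakm(-)$ to the short exact sequence
\[
 0 \to (S/K')(-n_3(r+1)) \xrightarrow{\,\cdot G_3^{r+1}\,} S/K \to S/J(\upsilon) \to 0
\]
and using that both $S/K$ and $S/K'$ are Cohen-Macaulay of dimension $1$ (the latter by the preceding lemma), so that $H^i_\frakm$ vanishes on each for $i\neq 1$, yields the four-term exact sequence
\[
 0 \to H^0_\frakm(S/J(\upsilon)) \to H^1_\frakm(S/K')(-n_3(r+1)) \xrightarrow{\,\cdot G_3^{r+1}\,} H^1_\frakm(S/K) \to H^1_\frakm(S/J(\upsilon)) \to 0.
\]
By Proposition~\ref{P:LCohoReg}, $\reg(S/J(\upsilon))=\max\{\nd(H^0_\frakm(S/J(\upsilon))),\,\nd(H^1_\frakm(S/J(\upsilon)))+1\}$, so I bound these two quantities in turn.

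The bound on the $H^1_\frakm$-piece is immediate: as a quotient of $H^1_\frakm(S/K)$, its top degree is at most $\reg(S/K)-1=(n_1+n_2)(r+1)-3$, and since $n_3\ge 2$, the quantity $\nd(H^1_\frakm(S/J(\upsilon)))+1\le(n_1+n_2)(r+1)-2$ is dominated by the claimed bound $(n_1+n_2+n_3-1)(r+1)-3$. For the $H^0_\frakm$-piece, the injection into $H^1_\frakm(S/K')(-n_3(r+1))$ gives $\nd(H^0_\frakm(S/J(\upsilon)))\le\reg(S/K')-1+n_3(r+1)$. It therefore suffices to prove
\[
 \reg(S/K')\ \le\ (n_1+n_2-1)(r+1)-2.
\]

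Establishing this last inequality is the main obstacle; I would use linkage theory, following the approach of~\cite[Thm.~1.2]{Simis12}. The ideals $K'$ and $K''$ are linked by the complete intersection $K$ of socle degree $s:=(n_1+n_2)(r+1)-3$, so linkage gives the duality $K''/K\simeq\omega_{S/K'}(-s)$. Combined with local/Matlis duality, this identifies $H^1_\frakm(S/K')_d\cong(K''/K)_{s-d}^*$, so $\nd(H^1_\frakm(S/K'))=s-\indeg(K''/K)$ and $\reg(S/K')=s+1-\indeg(K''/K)$. The desired inequality thus reduces to $\indeg(K''/K)\ge r+1$. This follows from a local analysis at $\upsilon$: the ideal $K''$ represents the $\upsilon$-primary component of the complete intersection $K$, whose local generators (images of $G_1^{r+1}$ and $G_2^{r+1}$) have $\frakm$-order at least $r+1$ since each $G_i$ vanishes at $\upsilon$; any nontrivial element of $K''/K$ is therefore forced into $\frakm^{r+1}$, giving the required lower bound on its minimal degree.

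For the equality claim under $2t\ge r+3$ and $n_1>1$, Lemma~\ref{L:saturation} identifies $(J(\upsilon):\frakm^\infty)=I$ where $I=\langle L_1^{r+1},\dotsc,L_N^{r+1}\rangle$ is generated by the powers of the tangent linear forms at $\upsilon$, so $H^0_\frakm(S/J(\upsilon))=I/J(\upsilon)$. The Hilbert-Burch resolution of $I$ given in Proposition~\ref{P:ShSt}, together with the explicit generators of $J(\upsilon)$, permits a direct computation of $\nd(I/J(\upsilon))$; the hypothesis $n_1>1$ ensures that no $G_i^{r+1}$ coincides with a generator of $I$, forcing the top-degree element of $I/J(\upsilon)$ to sit exactly at $(n_1+n_2+n_3-1)(r+1)-3$ and yielding equality.
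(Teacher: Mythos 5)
Your proof of the inequality is essentially correct, and it takes a genuinely different (though closely related) route from the paper's. The paper bounds $\nd(H^0_\frakm(S/J(\upsilon)))$ by citing a duality of Chardin for almost complete intersections, which gives $\indeg(H^0_\frakm(S/J))+\nd(H^0_\frakm(S/J))=(n_1{+}n_2{+}n_3)(r{+}1)-3$, and bounds $\nd(H^1_\frakm(S/J))$ by computing $\Ext^2(S/J,S)\cong((K:G_3^{r+1})/K)(a{+}b)$ from the multiplication sequence. You instead take local cohomology of that same multiplication sequence and push everything onto $\reg(S/K)$ and $\reg(S/K')$, controlling the latter by the linkage duality $\omega_{S/K'}\cong(K''/K)(s)$. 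This is the same duality in different clothing, but it has the virtue of staying inside the linkage framework the paper already sets up; note, however, that you need the module-level isomorphism, not merely the Hilbert-function statement of Proposition~\ref{P:CB7}, so a citation is required. Two small corrections: $K''=(K:K')$ is not the $\upsilon$-primary component of $K$ but of $J(\upsilon)=K+\langle G_3^{r+1}\rangle$ (indeed $K''=(J(\upsilon):\frakm^\infty)$, since $K''_\upsilon=(K_\upsilon:(K_\upsilon:G_3^{r+1}))=K_\upsilon+\langle G_3^{r+1}\rangle$ by Gorenstein duality in the Artinian local ring $S_\upsilon/K_\upsilon$); and ``forced into $\frakm^{r+1}$'' should refer to $\langle x,y\rangle^{r+1}$, not the irrelevant ideal, or the statement is circular. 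Neither slip damages the bound, since all three of $G_1^{r+1},G_2^{r+1},G_3^{r+1}$ have order at least $r{+}1$ at $\upsilon$, so every nonzero form in $K''$ lies in $\langle x,y\rangle^{r+1}$ and hence has degree at least $r{+}1$.

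The equality claim, however, has a genuine gap. You assert that the Hilbert--Burch resolution of $I$ ``together with the explicit generators of $J(\upsilon)$ permits a direct computation of $\nd(I/J(\upsilon))$,'' but the resolution of $I$ determines only the Hilbert function of $I$; the top degree in which $I_d\neq J(\upsilon)_d$ is exactly the quantity at issue and cannot be read off without knowing the Hilbert function of $J(\upsilon)$ itself. The hypothesis $n_1>1$ does give $\indeg(I/J(\upsilon))=r{+}1$ (since $J(\upsilon)_{r+1}=0$ while $L_1^{r+1}\in I$), but converting this into $\nd(I/J(\upsilon))=(n_1{+}n_2{+}n_3{-}1)(r{+}1)-3$ requires the self-duality of $H^0_\frakm(S/J(\upsilon))$, i.e.\ $\indeg(I/J(\upsilon))+\nd(I/J(\upsilon))=(n_1{+}n_2{+}n_3)(r{+}1)-3$, which is precisely the Chardin duality the paper invokes and which you never state. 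Alternatively, you could close the gap inside your own framework: under $2t\geq r{+}3$ one has $K''=(J(\upsilon):\frakm^\infty)=I\ni L_1^{r+1}$ and $K_{r+1}=0$, so $\indeg(K''/K)=r{+}1$ exactly, whence $\nd\bigl(H^1_\frakm(S/K')(-n_3(r{+}1))\bigr)=(n_1{+}n_2{+}n_3{-}1)(r{+}1)-3$; since this exceeds $\nd(H^1_\frakm(S/K))=(n_1{+}n_2)(r{+}1)-3$ (as $n_3\geq 2$), the top graded piece must map to zero in $H^1_\frakm(S/K)$ and therefore comes from $H^0_\frakm(S/J(\upsilon))$, giving the equality. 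As written, though, the final paragraph is an assertion rather than a proof.
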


\begin{proof}
Let $J=J(\upsilon)$.  We show first that $\nd(H^0_\frakm(S/J))\le (n_1{+}n_2{+}n_3{-}1)(r{+}1)-3$, with
equality if $2t\geq r+3$.  This bound will follow from~\cite[Lem.~5.8]{Chardin04}.  

Specializing the second part of~\cite[Lem.~5.8]{Chardin04} to the case $i=0$ gives
\[
H^0_\frakm(S/J)((n_1{+}n_2{+}n_3)(r{+}1){-}3)\ \cong\  H^0_\frakm(S/J)^*\,,
\]
where the $(n_1{+}n_2{+}n_3)(r{+}1){-}3$ in parentheses denotes a graded shift of $H^0_\frakm(S/J)$. 

Using the identification $H^0_\frakm(S/J)=(J:\frakm^\infty)/J$, this yields
\[
   \bigl((J:\frakm^{\infty})/J\bigr)((n_1{+}n_2{+}n_3)(r{+}1){-}3)\ \cong\
   \left((J:\frakm^{\infty})/J\right)^*.
\]
This implies that 
\[
   \indeg((J:\frakm^{\infty})/J)\ -\ (n_1{+}n_2{+}n_3)(r{+}1){+}3
     \ =\ -\nd((J:\frakm^{\infty})/J)\,,
\]
so
\[
   \indeg((J:\frakm^{\infty})/J)\ +\ 
   \nd((J:\frakm^{\infty})/J)\ =\ (n_1{+}n_2{+}n_3)(r{+}1){-}3\,.
\]
 Compare this to the first statement of~\cite[Thm.~1.2]{Simis12}.  
 As $J$ is $\langle x,y \rangle$-primary, $(J:\frakm^\infty)=(J:z^\infty)$.
 Since no pure power of $z$ appears in any of the forms $G_1,G_2,G_3$ (they all vanish at $[0:0:1]$), the
 maximum power of $z$ in $G_i^{r+1}$ is $(n_i{-}1)(r{+}1)$.  
 Hence $\indeg((J:z^{\infty})/J)\ge r+1$, so
 \begin{eqnarray*}
   \nd\left((J:\frakm^{\infty})/J\right) & =&
        (n_1{+}n_2{+}n_3)(r{+}1){-}3\ -\ 
        \indeg\left(J:\frakm^{\infty}/J\right) \\ 
    &\leq& (n_1{+}n_2{+}n_3)(r{+}1){-}3\ -\ (r{+}1)\\
    &=&(n_1{+}n_2{+}n_3{-}1)(r{+}1){-}3\,.
 \end{eqnarray*}
Hence $\nd(H^0_\frakm(S/J))\le (n_1+n_2+n_3-1)(r+1)-3$, as desired.  
If $2t\geq r{+}3$, then Lemma~\ref{L:saturation} shows that $(J:\frakm^{\infty})=\langle
L_1^{r+1},L_2^{r+1},L_3^{r+1}\rangle$, hence $\indeg((J:\frakm^\infty)/J)=r+1$ and
$\nd(H^0_\frakm(S/J))=(n_1{+}n_2{+}n_3{-}1)(r{+}1){-}3$.  

Now we show that $\nd(H^1_\frakm(S/J))\le(n_1+n_2)(r+1)-4$.  
Compare this statement to the second part of~\cite[Thm.~1.2]{Simis12}.  
By local duality~\cite[Thm.~A1.9]{EisenbudSyz},
\[
    H^1_\frakm(S/J)\ \cong\ \Ext^2(S/J,S(-3))^*\,.
\]
Hence $\nd(H^1_\frakm(S/J))=-\indeg(\Ext^2(S/J,S(-3)))$.  
Now let $I=\langle G_1^{r+1},G_2^{r+1} \rangle\subset J$.  
Then $I$ is a complete intersection, so $S/I$ has a minimal free resolution of the form
\[
   0\ \longrightarrow\ S(-a-b)\ \longrightarrow\ S(-a)\oplus S(-b)\ 
    \longrightarrow\ S,
\]
where $a=\deg(G_1^{r+1})=n_1(r+1)$ and $b=\deg(G_2^{r+1})=n_2(r+1)$.  In particular,
$\Ext^2(S/I,S)\cong S(a+b)/I$.  Let $\gamma=G_3^{r+1}$ and set $c=\deg(\gamma)=n_3(r+1)$.  
We have a short exact sequence
 \[
  0\ \longrightarrow\ S(-c)/(I:\gamma)
   \ \xrightarrow{\ \cdot\gamma\ }\  S/I
   \  \longrightarrow\ S/J\  \longrightarrow\ 0\,.
\]
Since $\codim(S/(I:\gamma))\ge 2$, the long exact sequence in $\Ext$ yields
\[
  0\ \longrightarrow\ \Ext^2(S/J,S)\ \longrightarrow
   \ \Ext^2(S/I,S)\ \xrightarrow{\ \cdot\gamma\ }\ \Ext^2(S/(I:\gamma),S)\
   \longrightarrow\ \dotsb\,,
\]
hence $\Ext^2(S/J,S)$ is the kernel of the map given by multiplication by $\gamma$ on $\Ext^2(S/I,S)$.  
Since $\Ext^2(S/I,S)\cong S(a+b)/I$, we have
\[
     \Ext^2(S/J,S)\ \cong\ \bigl((I:\gamma)/I\bigr)(a+b)\,,
\]
where the $a+b$ in parentheses represents a degree shift.  
Since $\gamma\notin I$, it follows that $I:\gamma$ is generated in degrees $\ge 1$, hence
$\indeg(\Ext^2(S/J,S))\ge -a-b+1$. 
It follows that $\nd(H^1_\frakm(S/J))=-\indeg(\Ext^2(S/J,S(-3)))\le a+b-4$.

Now, by Proposition~\ref{P:LCohoReg},
 \begin{eqnarray*}
   \reg(S/I) & =& \max\{\nd(H^0_\frakm(S/I)),\nd(H^1_\frakm(S/I))+1\}\\ 
   & \le& \max\{(n_1+n_2+n_3-1)(r+1)-3,(n_1+n_2)(r+1)-3\}\\
   &=&(n_1+n_2+n_3-1)(r+1)-3\,,
 \end{eqnarray*}
as we assume $n_1,n_2,n_3$ are all at least one.  
Equality holds if $2t\ge r+3$.
\end{proof}

\begin{corollary}\label{C:ThreeCurvesPostulation}
If $\Delta$ has three edges defined by forms $G_1,G_2,G_3$ of degree $n_1,n_2,n_3$ meeting smoothly with
distinct tangents at a single interior vertex, 
then 
\[
    \dim C^r_d(\Delta)\ =\ \sum\limits_{i=1}^3\tbinom{d-n_i(r+1)+2}{2}+\tbinom{r+a+2}{2}-t\tbinom{a+1}{2}
\]
for $d\ge (n_1+n_2+n_3-1)(r+1)-2$, 
where $t=\min\{3,r+2\}$ and $a=\lfloor\frac{r+1}{t-1}\rfloor$.
\end{corollary}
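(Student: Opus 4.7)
The plan is to assemble the three ingredients already in place: the dimension formula from Corollary~\ref{C:DimFormula}, the regularity bound from Proposition~\ref{P:RegBound}, and the explicit Hilbert polynomial from Corollary~\ref{Cor:generalPowers}. No new ideas should be needed; this is a bookkeeping argument that translates a bound on the regularity of $S/J(\upsilon)$ into a threshold after which the Hilbert function coincides with the Hilbert polynomial.

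First, since $\Delta$ has a single interior vertex, I apply formula~\eqref{Eq:DimensionFormula} of Corollary~\ref{C:DimFormula} to write
\[
   \dim C^r_d(\Delta)\ =\ \sum_{i=1}^{3}\tbinom{d-n_i(r+1)+2}{2}\ +\ \dim(S/J(\upsilon))_d\,.
\]
The task then reduces to identifying $\dim(S/J(\upsilon))_d$ with its Hilbert polynomial in the claimed range of $d$, and then substituting the value of that polynomial.

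Second, I invoke Proposition~\ref{P:RegularityAndPostulation}, which says that $\dim(S/J(\upsilon))_d$ equals $\HP(S/J(\upsilon),d)$ whenever $d\geq \reg(S/J(\upsilon))+1$. By Proposition~\ref{P:RegBound}, applied with the hypothesis that the three curves have distinct tangents at $\upsilon$ and meet only there, one has $\reg(S/J(\upsilon))\leq(n_1+n_2+n_3-1)(r+1)-3$. Consequently, the Hilbert function and Hilbert polynomial of $S/J(\upsilon)$ agree for every $d\geq (n_1+n_2+n_3-1)(r+1)-2$, which is exactly the range stated in the corollary.

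Finally, since $S/J(\upsilon)$ is supported at the single point $\upsilon$, its Hilbert polynomial is the constant multiplicity, which by Theorem~\ref{Th:generalPowers} agrees with the multiplicity of $S/I$ for $I$ generated by the powers of the tangent forms $L_1,L_2,L_3$. Corollary~\ref{C:multiplicity} (or equivalently the formula inside Corollary~\ref{Cor:generalPowers}) evaluates this as $\binom{r+a+2}{2}-t\binom{a+1}{2}$ with $t=\min\{3,r+2\}$ and $a=\lfloor(r+1)/(t-1)\rfloor$. Substituting completes the proof. There is really no hard step: the only subtlety is ensuring the hypotheses of Proposition~\ref{P:RegBound} and Theorem~\ref{Th:generalPowers} (smooth curves, distinct tangents, $\upsilon$ the only common zero) are all in force, which is guaranteed by the standing assumption of the corollary.
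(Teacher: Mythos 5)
Your argument is correct and follows the same route as the paper: the paper's proof is a one-line citation of Corollary~\ref{Cor:generalPowers}, Proposition~\ref{P:RegBound}, and Proposition~\ref{P:RegularityAndPostulation}, and you have simply unpacked Corollary~\ref{Cor:generalPowers} into its own ingredients (Corollary~\ref{C:DimFormula}, Theorem~\ref{Th:generalPowers}, and Corollary~\ref{C:multiplicity}). Nothing is missing.
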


\begin{proof}
 This follows from Corollary~\ref{Cor:generalPowers}, Proposition~\ref{P:RegBound}, and
 Proposition~\ref{P:RegularityAndPostulation}. 
\end{proof}

\begin{example}\label{ex:Compare}
For the cell complex of Figure~\ref{F:cell_complex}, whose Hilbert function, polynomial, and
postulation number are shown in Table~\ref{T:HilbertFunction}, we show in Table~\ref{T:PostulationCompare} how
the bounds on the postulation number in Corollary~\ref{C:GeneralPost} and
Corollary~\ref{C:ThreeCurvesPostulation} compare with the actual postulation number, $d_0$.
This indicates that for $r$ small, we should expect the bound in
Corollary~\ref{C:ThreeCurvesPostulation} to be close to exact, while the bound in
Corollary~\ref{C:GeneralPost} may be quite far off.

%%%%%%%%%%%%%%%%%%%%%%%%%%%%%%%%%%%%%%%%%%%%%%%%%%%%%%%%%%%%%%%%%%%%%%%%%%%%%%%%%
\begin{table}[htb]

 \caption{Comparing bounds with the postulation number in Example~\ref{ex:Compare}.}
 \label{T:PostulationCompare}

\begin{tabular}{|c|ccccccccccccccc|}
\hline
$r$ & 0 & 1 & 2 & 3 & 4 & 5 & 6 & 7 & 8 & 9&10&11&12&13&14\raisebox{-6pt}{\rule{0pt}{18pt}}\\
\hline
$d_0$ & 1 & 5 & 9 &13 &17 &20 &24 &28 &32 &35&39&43&47&50&54\raisebox{-6pt}{\rule{0pt}{18pt}}\\
Cor.~\ref{C:ThreeCurvesPostulation} & 1 & 5 & 9 & 13 & 17 & 21 & 25 & 29 &
        33 & 37&41&45&49&53&57\raisebox{-6pt}{\rule{0pt}{18pt}}\\
Cor.~\ref{C:GeneralPost} & 4 & 10 & 16 & 22 & 28 & 34 & 40 & 46 & 52 &
       58&64&70&76&82&88\raisebox{-6pt}{\rule{0pt}{18pt}}\\
\hline
\end{tabular}
\end{table}
%%%%%%%%%%%%%%%%%%%%%%%%%%%%%%%%%%%%%%%%%%%%%%%%%%%%%%%%%%%%%%%%%%%%%%%%%%%%%%%%%

\end{example}

%%%%%%%%%%%%%%%%%%%%%%%%%%%%%%%%%%%%%%%%%%%%%%%%%%%%%%%%%%%%%%%%%%%%%%%%%%%%%%%%%
\section{Examples}\label{S:examples}

We illustrate some limitations and possible extensions of our results for cell complexes
$\Delta$ with a single interior vertex $\upsilon$.
In Section~\ref{S:pencil}, we determined the Hilbert function of the spline module when the curves defining the
edges of $\Delta$ lie in a pencil.
As noted in Remark~\ref{R:no_geometry}, this Hilbert function does not depend upon the geometry of the curves
in that pencil, only on their number and degree.
In Section~\ref{S:singleVertex}, we determined the Hilbert polynomial of the spline module in nearly the opposite
case---when the curves vanish simultaneously only at the vertex $\upsilon$ and they have distinct
tangents at $\upsilon$.

Our first example is from~\cite{CLO05}---three curves of different degrees, but only two tangents
at $\upsilon$.
In the remaining examples, the curves are three conics defining schemes of multiplicity three and two
(intermediate between the cases of Sections~\ref{S:pencil} and~\ref{S:singleVertex}).
We show that the Hilbert polynomial of the spline module depends upon the geometry of the curves.

%%%%%%%%%%%%%%%%%%%%%%%%%%%%%%%%%%%%%%%%%%%%%%%%%%%%%%%%%%%%%%%%%%%%%%%%%%%%%%%%%
\begin{example}\label{ex:CLOex}
 This example appears in~\cite[\S~8.3, Exer.~13]{CLO05}. 
 Let $\Delta$ consist of portions of the three curves $G_1=yz-x^2$, $G_2=xz+y^2$, and $G_3=yz^2-x^3$ in the unit
 disc in $\R^2$ where $z\neq 0$ meeting at the origin as in Figure~\ref{fig:CLOex}.   
%%%%%%%%%%%%%%%%%%%%%%%%%%%%%%%%%%%%%%%%%%%%%%%%%%%%%%%%%%%%%%%%%%%%%%%%%%%%%%%%%
\begin{figure}[htb]
  \begin{picture}(103,103)
   \put(0,0){\includegraphics{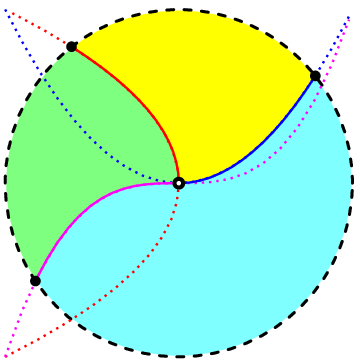}}
   \put(68,74){$G_1$} \put(40,79){$G_2$}\put(15,50){$G_3$}
  \end{picture}   
 \caption{Complex $\Delta$ in Example~\ref{ex:CLOex}.}
 \label{fig:CLOex}
\end{figure}
%%%%%%%%%%%%%%%%%%%%%%%%%%%%%%%%%%%%%%%%%%%%%%%%%%%%%%%%%%%%%%%%%%%%%%%%%%%%%%%%%
 We have $J(\upsilon)=\langle G_1^{r+1}, G_2^{r+1}, G_3^{r+1} \rangle$ and 
 $C^r(\Delta)\cong S\oplus \mbox{syz}(J(\upsilon))$ by Proposition~\ref{P:exact_sequences}. 
	
 The tangents of $G_1,G_2,G_3$ at $(0,0)$ are $L_1=y$, $L_2=x$, and $L_3=y$.  
 Let $I=\langle L_1^{r+1}, L_2^{r+1}, L_3^{r+1} \rangle=\langle x^{r+1},y^{r+1} \rangle$.  
 Since the tangents are not distinct, we cannot use Theorem~\ref{Th:generalPowers} to compute the multiplicity of the
 scheme $S/J(\upsilon)$.   
 However, if $r\le 1$, then the schemes $S/I$ and $S/J(\upsilon)$ have the same multiplicity by
 Corollary~\ref{C:low_power}.  
 Using Corollary~\ref{C:DimFormula},
 \[
    \HP(C^r(\Delta),d)\ =\ 2\tbinom{d-2(r+1)}{2}+\tbinom{d-3(r+1)}{2}+(r{+}1)^2\,,
 \]
 if $r\le 1$.  
 For $r\ge 2$, we replace $G_3^{r+1}$ by $G_3^{r+1}-z^{r+1}G_1^{r+1}$, which has leading term  $x^2y^rz^{2r+1}$ in $z$.
 Set $I':=\langle x^{r+1},x^2y^r,y^{r+1}\rangle$.
 The minimal free resolution of $S/I'$ has the form
 \begin{equation}\label{Eq:NMFR}
   0\ \longrightarrow\ S(-2r-1)\oplus S(-r-3)\ \longrightarrow\ S^3\ 
    \longrightarrow\ S\,.
 \end{equation}
The ideal $I'$ is generated by the leading forms of  $G_1^{r+1}$ $G_2^{r+1}$, and $G_3^{r+1}{-}z^{r+1}G_1^{r+1}$,
which generate $J(\upsilon)$.   
By Lemmas~\ref{L:initial_containment} and~\ref{L:syzygies}, $S/J(\upsilon)$ has the same Hilbert polynomial as
$S/I'$ when $2\le r\le 4$.  Using Corollary~\ref{C:DimFormula}, 
 \begin{eqnarray*}
   \HP(C^r(\Delta),d) &=&2\tbinom{d-2(r+1)}{2}+\tbinom{d-3(r+1)}{2}+\HP(S/J(\upsilon),d)\\
	&=&2\tbinom{d-2(r+1)}{2}+\tbinom{d-3(r+1)}{2}+\HP(S/I',d)\\
	&=&2\tbinom{d-2(r+1)}{2}+\tbinom{d-3(r+1)}{2}+(2+r+r^2)\,,
  \end{eqnarray*}
 where the final equality follows from the minimal free resolution of $I'$~\eqref{Eq:NMFR}.

 If $r>4$, the techniques of this paper will not suffice to compute $\HP(C^r(\Delta),d)$.  
 Computations in Macaulay2 show that the saturation of $\ini_\omega(J(\upsilon))$ is 
 $I'=\langle x^{r+1},x^2y^r,y^{r+1}\rangle$ for $r=5$, where $\omega=(0,0,1)$.  
 This cannot be concluded from Lemma~\ref{L:syzygies} since the condition on second syzygies fails.  
 Further computations in Macaulay2 show
\[
    \bigl(\ini_\omega(J(\upsilon)):\frakm^\infty\bigr)\ =\ 
    \langle x^{r+1},y^{r+1},x^2y^r,\defcolor{x^6y^{r-1}} \rangle
\]
 for $r=6,7,8,9$. 
 For $r=10$,
 \[
   \bigl(\ini_\omega(J(\upsilon)):\frakm^\infty\bigr)\ =\ 
     \langle x^{r+1},y^{r+1},x^2y^r,x^6y^{r-1},x^{10}y^{r-2} \rangle\,,
 \]
 indicating a growth in the number of generators of the saturation
 $(\ini_\omega(J(\upsilon)):\frakm^\infty)$.  See Table~\ref{T:CLOEx}.  
 For $r\geq 5$ different techniques will be needed to compute
 $\HP(C^r(\Delta),d)$.  The column headed $d_0$ gives the postulation number (computed only through $r=5$).
 The final column is the regularity bound from Corollary~\ref{C:ThreeCurvesPostulation}.

%%%%%%%%%%%%%%%%%%%%%%%%%%%%%%%%%%%%%%%%%%%%%%%%%%%%%%%%%%%%%%%%%%%%%%%%%%%%%%%%%
\begin{table}[htb]
 \caption{Table for cell complex $\Delta$ in Example~\ref{ex:CLOex}.}
 \label{T:CLOEx}

\begin{tabular}{|c|c|c|c|c|c|}
\hline\raisebox{-6pt}{\rule{0pt}{18pt}}
$r$ & $\mbox{sat}(\ini_\omega(J(\upsilon)))$ & $\HP(C^r(\Delta),d)$ & $\HP(S/J(\upsilon),d)$ & $d_0$ & $6(r{+}1){-}3$ \\
\hline\raisebox{-6pt}{\rule{0pt}{18pt}}
0 & $\langle x,y \rangle$ & $\frac{3}{2}d^2-\frac{5}{2}d+2$ & 1 & 3 & 3 \\ \raisebox{-6pt}{\rule{0pt}{18pt}}
1 & $\langle x^2,y^2\rangle$ & $\frac{3}{2}d^2-\frac{19}{2}d+20$ & 4 & 9 & 9 \\\raisebox{-6pt}{\rule{0pt}{18pt}}
2 & $\langle x^3,y^3,x^2y^2 \rangle$ & $\frac{3}{2}d^2-\frac{33}{2}d+56$ & 8 & 15 & 15\\\raisebox{-6pt}{\rule{0pt}{18pt}}
3 & $\langle x^4,y^4,x^2y^3 \rangle$ & $\frac{3}{2}d^2-\frac{47}{2}d+111$ & 14 & 21 & 21 \\\raisebox{-6pt}{\rule{0pt}{18pt}}
4 & $\langle x^5,y^5,x^2y^4 \rangle$ &  $\frac{3}{2}d^2-\frac{61}{2}d+185$ & 22 & 27 & 27 \\
\hline\raisebox{-6pt}{\rule{0pt}{18pt}} 
5 & $\langle x^6,y^6,x^2y^5 \rangle$ & $\frac{3}{2}d^2-\frac{75}{2}d+278$ & 32 & 32 & 33 \\\raisebox{-6pt}{\rule{0pt}{18pt}}
6 & $\langle x^7,y^7,x^2y^6,x^6y^5\rangle$ &
   $\frac{3}{2}d^2-\frac{89}{2}d+389$ & 43 & & 39 \\\raisebox{-6pt}{\rule{0pt}{18pt}}
7 & $\langle x^8,y^8,x^2y^7,x^6y^6\rangle$ &
   $\frac{3}{2}d^2-\frac{103}{2}d+519$ & 56 & & 45 \\\raisebox{-6pt}{\rule{0pt}{18pt}}
8 & $\langle x^9,y^9,x^2y^8,x^6y^7\rangle$ &
   $\frac{3}{2}d^2-\frac{117}{2}d+668$ & 71 & & 51 \\\raisebox{-6pt}{\rule{0pt}{18pt}}
9 & $\langle x^{10},y^{10},x^2y^9,x^6y^8\rangle$ &
    $\frac{3}{2}d^2-\frac{131}{2}d+836$ & 88 & & 57 \\\raisebox{-6pt}{\rule{0pt}{18pt}}
10& $\langle x^{11},y^{11},x^2y^{10},x^6y^9,x^{10}y^8\rangle$ & $\frac{3}{2}d^2-\frac{145}{2}d+1022$ & 106 & & 63 \\
\hline
\end{tabular}

\end{table}	
%%%%%%%%%%%%%%%%%%%%%%%%%%%%%%%%%%%%%%%%%%%%%%%%%%%%%%%%%%%%%%%%%%%%%%%%%%%%%%%%%
	
\end{example}
%%%%%%%%%%%%%%%%%%%%%%%%%%%%%%%%%%%%%%%%%%%%%%%%%%%%%%%%%%%%%%%%%%%%%%%%%%%%%%%%%
     
%%%%%%%%%%%%%%%%%%%%%%%%%%%%%%%%%%%%%%%%%%%%%%%%%%%%%%%%%%%%%%%%%%%%%%%%%%%%%%%%%

\begin{example}\label{ex:multipoint}
 Suppose that $G_1$, $G_2$, and $G_3$ are conics underlying the edges of a cell complex $\Delta$ with a single
 vertex $\upsilon$ that do not lie in a pencil, but simultaneously vanish in at least another point.
 By Corollary~\ref{C:DimFormula}~\eqref{Eq:DimensionFormula}, the Hilbert function of $C^r(\Delta)$ is
\[
    \HF(C^r(\Delta),d)\ =\ 
   3\tbinom{d{-}2r}{2} + \dim(S/J(\upsilon))_d\,.
\]
 We compute the Hilbert functions of $S/J(\upsilon)$ for different choices of three conics.
 Stiller~\cite[Thm.~4.9]{Stiller83} did this when $r=0$ and when the conics define 1, 2, or 3
 simple points.

 We first consider three cases where the conics define a scheme of multiplicity three, consisting of the three
 points $\upsilon=[0:0:1]$, $[2:0:1]$, and $[1:-1:1]$.
 The first triple is 
 $A:= 2x^2+2xy+y^2-4xz-3y$, 
 $B:=  x^2-xy+y^2-2xz+yz$, and 
 $C:=  x^2-8xy-y^2-2xz+6yz$.
 Their curves have distinct tangents at each of three points. 
 The next triple is
 $A$, 
 $D:= x^2+4xy-y^2-2xz-6yz$, and 
 $E:= x^2-3xy-y^2-2xz+yz$.
 The curves of $D$ and $E$ are tangent at $[1:-1:1]$.
 The third triple is 
 $D$, 
 $E$, and
 $F:= 2x^2+5xy+y^2-4xz-6yz$.
 The curves of $F$ and $D$ are also tangent at the point $[2:0:1]$.
 We display the resulting cell complexes in the affine $\R^2$ with $z\neq 0$ in Figure~\ref{F:Mult_th}.
%%%%%%%%%%%%%%%%%%%%%%%%%%%%%%%%%%%%%%%%%%%%%%%%%%%%%%%%%%%%%%%%%%%%%%%%%%%%%%%%%
\begin{figure}[htb]

   \begin{picture}(140,105)
     \put(0,0){\includegraphics{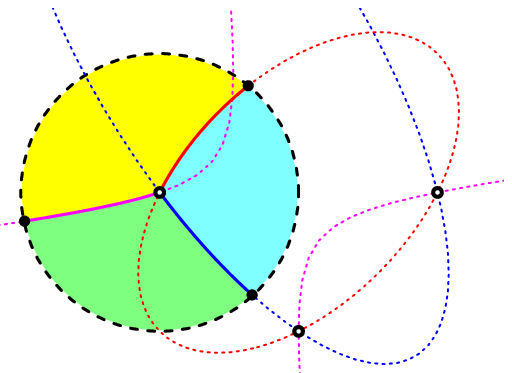}}
     \put(61,36){$A$} \put(49,70){$B$} \put(21,50){$C$}
   \end{picture}
   \quad
   \begin{picture}(140,105)
     \put(0,0){\includegraphics{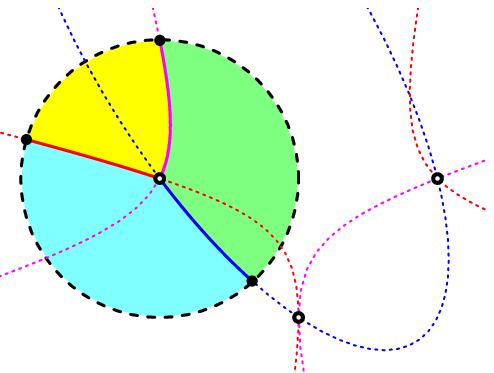}}
     \put(50,30){$A$} \put(20,51){$D$} \put(50,74){$E$}
   \end{picture}
   \quad
   \begin{picture}(140,105)
     \put(0,0){\includegraphics{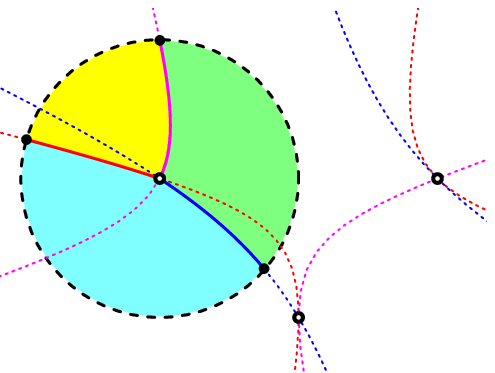}}
     \put(53,34){$F$} \put(20,51){$D$} \put(50,74){$E$}
   \end{picture}

\caption{Three conics defining three points.}
\label{F:Mult_th}
\end{figure}
%%%%%%%%%%%%%%%%%%%%%%%%%%%%%%%%%%%%%%%%%%%%%%%%%%%%%%%%%%%%%%%%%%%%%%%%%%%%%%%%%

Table~\ref{Ta:ThreeQuadrics} gives the Hilbert functions for $d\leq 18$ and $r\leq 4$ for each of these triples.
While the Hilbert functions agree for $r=0$ (as shown by Stiller~\cite[Thm.~4.9]{Stiller83}), they differ for
all larger $r$ in both the postulation number and Hilbert polynomial.

%%%%%%%%%%%%%%%%%%%%%%%%%%%%%%%%%%%%%%%%%%%%%%%%%%%%%%%%%%%%%%%%%%%%%%%%%%%%%%%%%
\begin{table}[htb]
 \caption{Hilbert functions for three quadrics defining three points.}
 \label{Ta:ThreeQuadrics}

\begin{center}
{\small
\begin{tabular}{|r|ccccccccccccccccccc|}\hline
&\multicolumn{19}{c|}{$d$}\\\hline
$r$&0&1&2&3&4&5&6&7&8&9&10&11&12&13&14&15&16&17&18\\\hline
%%%%%%%%%%%%%%%%%%%%%%%%%%%   Three simple
0&\defcolor{1}& 3& 3& 3& 3& 3& 3& 3& 3& 3& 3& 3& 3& 3& 3& 3& 3& 3& 3\\
1& 1& 3& 6&10&12&12&\defcolor{10}& 9& 9& 9& 9& 9& 9& 9& 9& 9& 9& 9& 9\\
2& 1& 3& 6&10&15&21&25&27&27&\defcolor{25}&21&21&21&21&21&21&21&21&21\\
3& 1& 3& 6&10&15&21&28&36&42&46&48&48&46&\defcolor{42}&36&36&36&36&36\\
4& 1& 3& 6&10&15&21&28&36&45&55&63&69&73&75&75&73&69&\defcolor{63}&57\\\hline
\end{tabular}\vspace{5pt}

\begin{tabular}{|r|ccccccccccccccccccc|}\hline
&\multicolumn{19}{c|}{$d$}\\\hline
$r$&0&1&2&3&4&5&6&7&8&9&10&11&12&13&14&15&16&17&18\\\hline
%%%%%%%%%%%%%%%%%%%%%%%%%%%   One tangent
0&\defcolor{1}&3&3& 3& 3& 3& 3& 3& 3& 3& 3& 3& 3& 3& 3& 3& 3& 3& 3\\
1& 1&3&6&10&12&\defcolor{12}&10&10&10&10&10&10&10&10&10&10&10&10&10\\
2& 1&3&6&10&15&21&25&27&27&\defcolor{25}&22&22&22&22&22&22&22&22&22\\
3& 1&3&6&10&15&21&28&36&42&46&48&48&46&\defcolor{42}&38&38&38&38&38\\
4& 1&3&6&10&15&21&28&36&45&55&63&69&73&75&75&73&69&\defcolor{63}&60\\\hline 
\end{tabular}\vspace{5pt}

\begin{tabular}{|r|ccccccccccccccccccc|}\hline
&\multicolumn{19}{c|}{$d$}\\\hline
$r$&0&1&2&3&4&5&6&7&8&9&10&11&12&13&14&15&16&17&18\\\hline
%%%%%%%%%%%%%%%%%%%%%%%%%%% Two tangents
0&\defcolor{1}&3&3& 3& 3& 3& 3& 3& 3& 3& 3& 3& 3& 3& 3& 3& 3& 3& 3\\
1& 1&3&6&10&12&\defcolor{12}&11&11&11&11&11&11&11&11&11&11&11&11&11\\
2& 1&3&6&10&15&21&25&27&27&\defcolor{25}&23&23&23&23&23&23&23&23&23\\
3& 1&3&6&10&15&21&28&36&42&46&48&48&46&\defcolor{42}&40&40&40&40&40\\
4& 1&3&6&10&15&21&28&36&45&55&63&69&73&75&75&73&\defcolor{69}&63&63\\\hline 
\end{tabular}
}
\end{center}

\end{table}
%%%%%%%%%%%%%%%%%%%%%%%%%%%%%%%%%%%%%%%%%%%%%%%%%%%%%%%%%%%%%%%%%%%%%%%%%%%%%%%%%

We find similar behavior when the three quadrics define a scheme of multiplicity two, for us the points
$[0:0:1]$ and $[2:0:1]$.
Let 
$A:=x^2+xy+y^2-2xz$, 
$B:=2x^2+xy+2y^2-4xz-2yz$, 
$C:=x^2+xy+2y^2-2xz+6yz$, and 
$D:=x^2-xy-2y^2-2xz+2yz$.
Then $\langle A,B,C\rangle$ and $\langle A,B,D\rangle$ both define the same scheme consisting of those two
reduced points.
They have distinct tangents at $[0:0:1]$, and $A$, $B$, and $C$ have distinct tangents at $[2:0:1]$, but $B$
and $D$ are tangent at $[2:0:1]$.
Figure~\ref{F:Mult_two} shows 
%%%%%%%%%%%%%%%%%%%%%%%%%%%%%%%%%%%%%%%%%%%%%%%%%%%%%%%%%%%%%%%%%%%%%%%%%%%%%%%%%
\begin{figure}[htb]
\[
   \begin{picture}(175,125)
     \put(0,0){\includegraphics{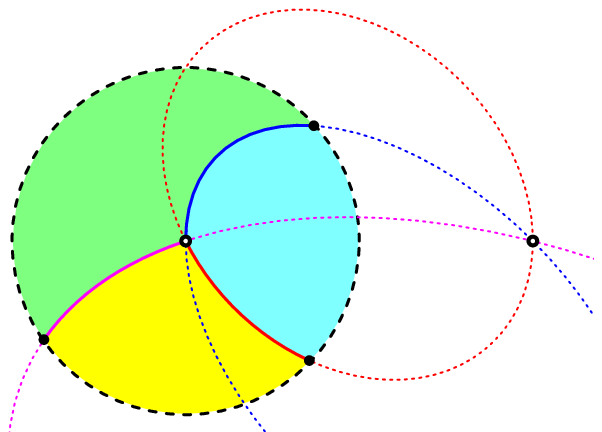}}
     \put(54,78){$A$} \put(74,36){$B$} \put(25,45){$C$}
   \end{picture}
   \quad
   \begin{picture}(175,125)
     \put(0,0){\includegraphics{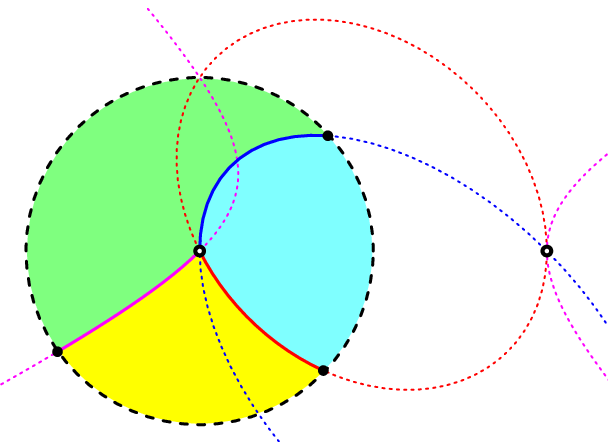}}
     \put(54,78){$A$} \put(74,36){$B$} \put(27,40){$D$}
   \end{picture}
\]
 \caption{Three conics defining two points.}
 \label{F:Mult_two}
\end{figure}
%%%%%%%%%%%%%%%%%%%%%%%%%%%%%%%%%%%%%%%%%%%%%%%%%%%%%%%%%%%%%%%%%%%%%%%%%%%%%%%%%
the resulting cell complexes and the underlying curves.
Table~\ref{Ta:ThreeQuadrics_two} shows the Hilbert functions of $S/J(\upsilon)$.

%%%%%%%%%%%%%%%%%%%%%%%%%%%%%%%%%%%%%%%%%%%%%%%%%%%%%%%%%%%%%%%%%%%%%%%%%%%%%%%%%
\begin{table}[htb]
 \caption{Hilbert functions of $S/J(\upsilon)$ for three quadrics defining two points.}
 \label{Ta:ThreeQuadrics_two}

\begin{center}
{\small
\begin{tabular}{|r|cccccccccccccccccc|}\hline
&\multicolumn{18}{c|}{$d$}\\\hline
$r$&0&1&2&3&4&5&6&7&8&9&10&11&12&13&14&15&16&17\\\hline
%%%%%%%%%   Two simple points
 0&1&3&3& 2& 2& 2& 2& 2& 2& 2& 2& 2& 2& 2&  2&  2&  2&  2\\ 
 1&1&3&6&10&12&12&10& 7& 6& 6& 6& 6& 6& 6&  6&  6&  6&  6\\ 
 2&1&3&6&10&15&21&25&27&27&25&21&16&14&14& 14& 14& 14& 14\\ 
 3&1&3&6&10&15&21&28&36&42&46&48&48&46&42& 36& 29& 25& 24\\\hline
\end{tabular}\vspace{5pt}

\begin{tabular}{|r|cccccccccccccccccc|}\hline
&\multicolumn{18}{c|}{$d$}\\\hline
$r$&0&1&2&3&4&5&6&7&8&9&10&11&12&13&14&15&16&17\\\hline
%%%%%%%%%   Two tangent at one point
 0&1&3&3& 2& 2& 2& 2& 2& 2& 2& 2& 2& 2& 2&  2&  2&  2&  2\\ 
 1&1&3&6&10&12&12&10& 7& 7& 7& 7& 7& 7& 7&  7&  7&  7&  7\\ 
 2&1&3&6&10&15&21&25&27&27&25&21&16&15&15& 15& 15& 15& 15\\ 
 3&1&3&6&10&15&21&28&36&42&46&48&48&46&42& 36& 29& 26& 26\\\hline 
\end{tabular}
}
\end{center}

%%%%%%%%%   Two simple points
%0,{1,3,3,2,2,2,2,2,2,2,2,2,2,2,2,2,2,2,2,2,2,2,2,2,2,2,2,2,2,2,2,2,2,2,2,2,2
%1,{1,3,6,10,12,12,10,7,6,6,6,6,6,6,6,6,6,6,6,6,6,6,6,6,6,6,6,6,6,6,6,6,6,6,6,6,6
%2,{1,3,6,10,15,21,25,27,27,25,21,16,14,14,14,14,14,14,14,14,14,14,14,14,14,14,14,14,14,14,14,14,14,14,14,14,14
%3,{1,3,6,10,15,21,28,36,42,46,48,48,46,42,36,29,25,24,24,24,24,24,24,24,24,24,24,24,24,24,24,24,24,24,24,24,24
%4,{1,3,6,10,15,21,28,36,45,55,63,69,73,75,75,73,69,63,55,46,40,38,38,38,38,38,38,38,38,38,38,38,38,38,38,38,38
%5,{1,3,6,10,15,21,28,36,45,55,66,78,88,96,102,106,108,108,106,102,96,88,78,67,59,55,54,54,54,54,54,54,54,54,54,54,54
%
%%%%%%%%%   Two tangent at one point
%0,{1,3,3,2,2,2,2,2,2,2,2,2,2,2,2,2,2,2,2,2,2,2,2,2,2,2,2,2,2,2,2,2,2,2,2,2,2
%1,{1,3,6,10,12,12,10,7,7,7,7,7,7,7,7,7,7,7,7,7,7,7,7,7,7,7,7,7,7,7,7,7,7,7,7,7,7
%2,{1,3,6,10,15,21,25,27,27,25,21,16,15,15,15,15,15,15,15,15,15,15,15,15,15,15,15,15,15,15,15,15,15,15,15,15,15
%3,{1,3,6,10,15,21,28,36,42,46,48,48,46,42,36,29,26,26,26,26,26,26,26,26,26,26,26,26,26,26,26,26,26,26,26,26,26
%4,{1,3,6,10,15,21,28,36,45,55,63,69,73,75,75,73,69,63,55,46,41,41,41,41,41,41,41,41,41,41,41,41,41,41,41,41,41
%5,{1,3,6,10,15,21,28,36,45,55,66,78,88,96,102,106,108,108,106,102,96,88,78,67,60,59,59,59,59,59,59,59,59,59,59,59,59

\end{table}
%%%%%%%%%%%%%%%%%%%%%%%%%%%%%%%%%%%%%%%%%%%%%%%%%%%%%%%%%%%%%%%%%%%%%%%%%%%%%%%%%@

\end{example}    

\begin{remark}
The multiplicity of a zero-dimensional scheme is the sum of its local multiplicities at each point of its
support.
For $S/J(\upsilon)$, this is
\[
\mult(S/J(\upsilon))\ =\ \sum_{\nu\in\supp(S/J(\upsilon))} \mult_\nu(S/J(\upsilon)),
\]
where $\mult_\nu(S/J(\upsilon))$ is the vector space dimension of the local ring
$(S/J(\upsilon))_{\frakm_\nu}$ with $\frakm_\nu$ the maximal ideal of the point $\nu$.  
This is the multiplicity of the tangent cone of $S/J(\upsilon)$ at $\nu$ (see ~\cite[\S~5.4]{Eisenbud}).  
Thus we should expect that we can read off the multiplicity of the schemes in Example~\ref{ex:multipoint} as
sums of local multiplicities which depend only on the geometry of the tangent cones at points in the support
of $S/J(\upsilon)$.  
This is indeed the case; to see this, we write the multiplicites of Tables~\ref{Ta:ThreeQuadrics}
and~\ref{Ta:ThreeQuadrics_two} as sums of the multiplicity in Corollary~\ref{C:multiplicity} and the
multiplicity in Table~\ref{T:CLOEx}. 
 Call the multiplicity in Corollary~\ref{C:multiplicity} the \demph{generic multiplicity}; by
 Theorem~\ref{Th:generalPowers} this is the multiplicity of $S/J(\upsilon)$ when tangents are distinct. 

In Table~\ref{Ta:ThreeQuadrics}, note that if the tangents of the edge forms at all points in the support of
$S/J(\upsilon)$ are distinct, then the multiplicity of $S/J(\upsilon)$ is thrice the generic multiplicity.  
If the tangents of edge forms are distinct at two points of support but two tangents coincide at the third
point, then the geometry at the third point is the same as in Example~\ref{ex:CLOex}.  
The multiplicity of $S/J(\upsilon)$ is twice the generic multiplicity plus the multiplicity given in
Table~\ref{T:CLOEx}.  
If tangents of edge forms are distinct at one point but two tangents coincide at both other points, then the
multiplicity of $S/J(\upsilon)$ is the generic multiplicity plus twice the multiplicity given in
Table~\ref{T:CLOEx}.  
The same observations can be made in Table~\ref{Ta:ThreeQuadrics_two}.
\end{remark}

%%%%%%%%%%%%%%%%%%%%%%%%%%%%%%%%%%%%%%%%%%%%%%%%%%%%%%%%%%%%%%%%%%%%%%%%%%%%%%%%%
\providecommand{\bysame}{\leavevmode\hbox to3em{\hrulefill}\thinspace}
\providecommand{\MR}{\relax\ifhmode\unskip\space\fi MR }
% \MRhref is called by the amsart/book/proc definition of \MR.
\providecommand{\MRhref}[2]{%
  \href{http://www.ams.org/mathscinet-getitem?mr=#1}{#2}
}
\providecommand{\href}[2]{#2}

%%%%%%%%%%%%%%%%%%%%%%%%%%%%%%%%%%%%%%%%%%%%%%%%%%%%%%%%%%%%%%%%%%%%%%%%%%%%%%%%%
\end{document}